\documentclass[10pt]{article}
\usepackage{amssymb}
\usepackage{bbm}
\usepackage{bbding}
\usepackage{stmaryrd}
\usepackage{amscd}
\usepackage{mathrsfs}
\usepackage{amsfonts}
\usepackage{graphicx}
\usepackage{bookmark}
\usepackage{amsmath,amscd,stmaryrd,amssymb,array, amsfonts,amsmath,amssymb}
\usepackage{enumitem}
\usepackage{color}
\usepackage{CJK}
\usepackage{amsthm}
\usepackage{dsfont}

\setenumerate[1]{itemsep=0pt,partopsep=0pt,parsep=\parskip,topsep=5pt}
\setitemize[1]{itemsep=0pt,partopsep=0pt,parsep=\parskip,topsep=0pt}
\setdescription{itemsep=0pt,partopsep=0pt,parsep=\parskip,topsep=5pt}

\numberwithin{figure}{section}
 \numberwithin{equation}{section}

\newtheorem{theorem}{Theorem}[section]
\newtheorem{proposition}[theorem]{Proposition}
\newtheorem{definition}[theorem]{Definition}
\newtheorem{corollary}[theorem]{Corollary}
\newtheorem{lemma}[theorem]{Lemma}
\newtheorem{remark}[theorem]{Remark}
\newtheorem{example}[theorem]{Example}

\newcommand{\cA}{{\mathcal A}}

\newcommand{\cO}{{\mathcal O}}

\newcommand{\cL}{{\mathcal L}}

\newcommand{\0}{{\theta}}

\def\be{\begin{equation}}
\def\ee{\end{equation}}
\def\bes{\begin{equation*}}
\def\ees{\end{equation*}}
\def\bsp{\begin{split}}
\def\esp{\end{split}}

\def\ba{\begin{array}}
\def\ea{\end{array}}
\def\benu{\begin{enumerate}}
\def\eenu{\end{enumerate}}
\def\bt{\begin{theorem}}
\def\et{\end{theorem}}
\def\bp{\begin{proposition}}
\def\ep{\end{proposition}}
\def\bl{\begin{lemma}}
\def\el{\end{lemma}}
\def\br{\begin{remark}}
\def\er{\end{remark}}
\def\bd{\begin{definition}}
\def\ed{\end{definition}}
\def\bc{\begin{corollary}}
\def\ec{\end{corollary}}

\def\va{\varphi}

\def\de{\delta}

\def\lam{\lambda}

\def\O{\Theta}

\def\ve{\varepsilon}

\def\sig{\sigma}

\def\gam{\gamma}

\def\0{\theta}
\def\w{\omega}
\def\a{\alpha}
\def\W{\Omega}

\def\mb{\mbox}

\def\.{\cdot}

\def\R{\mathbb{R}}

\def\A{\forall}

\def\ra{\rightarrow}

\def\~{\tilde}
\def\8{\infty}

\def\E{\mathbb{E}}
\def\mb{\mbox}

\def\<{\langle}
\def\>{\rangle}

\def\Hs{\hspace{1cm}}\def\hs{\hspace{0.5cm}}
\def\Vs{\vskip8pt}\def\vs{\vskip4pt}
\def\[{\left[}\def\]{\right]}
\def\({\left(}\def\){\right)}

\begin{document} 
\begin{center}
{\bf\Large {Critical regularity and dissipativity for stochastic reaction-diffusion equations in Bochner spaces over spaces of continuous functions}}
\end{center}
\begin{center}
 {{Xuewei Ju,\hs Xiaoting Tong}}
\end{center}
\begin{center}
{\footnotesize { Department of Mathematics, Civil Aviation University of China\\
     Tianjin,  China  }}\\
{\footnotesize{\em E-mail}: xwju@cauc.edu.cn.}
\end{center}
\begin{abstract}
In this paper, we consider the stochastic reaction-diffusion equation $\mathrm{d}u = -A u + f(u))\mathrm{d}t + \sigma(u)\mathrm{d}W$ on a smooth bounded domain $\mathcal{O}$ with homogeneous Dirichlet boundary conditions. We investigate the long-time behavior of solutions with a strongly dissipative drift nonlinearity and superlinear multiplicative noise in the Bochner space $L^q(\Omega; C_0(\overline{\mathcal{O}}))$, $q \ge 2$. Here  $A=-\Delta$ be the Laplace operator on $\mathcal O$ with homogeneous Dirichlet boundary conditions and $W$ is a two-sided trace-class Wiener process. The standard Galerkin method fails to yield energy estimates in $L^q(\Omega; L^q(\mathcal{O}))$ via the It\^o formula for $q > 2$, owing to the interference of projection operators when dealing with nonlinear terms; meanwhile, the classical theory of mild solutions lacks sufficient spatial regularity to apply the It\^o formula directly. To overcome these difficulties, we consider mild solutions and establish a critical regularity estimate for the corresponding stopped process $u_n(t)$ in $W_0^{1,q}(\mathcal{O})$, which rigorously justifies the use of the It\^o formula in the non-Hilbert space $L^q(\Omega; L^q(\mathcal{O}))$. As a result, we derive explicit moment energy estimates and quantitative dissipativity bounds, yielding global existence, uniqueness, and exponential asymptotic decay of solutions in $L^q(\Omega; C_0(\overline{\mathcal{O}}))$. Unlike previous qualitative results in continuous function spaces, our framework provides a fully quantitative theory of global dissipativity.
\end{abstract}
\noindent\textbf{Keywords:} Stochastic reaction-diffusion equations; Bochner spaces over spaces of continuous functions; Critical regularity; Dissipativity

\noindent\textbf{2020 MSC:} 60H15; 35K57; 35B40; 35B65; 46E30; 46E15
\section{Introduction}
In physical models such as chemical reactions and population dynamics, the {\sl pointwise behavior} of solutions (e.g., maximum concentration, spatial distribution) carries clear physical significance. Such models are often described by reaction-diffusion equations, where the unknown function represents a concentration or density that is naturally pointwise defined. This motivates us to work in spaces of continuous functions, where solutions are pointwise defined and boundary conditions can be naturally incorporated. These spaces provide a more direct framework for problems where the values of the solution at each point matter.

Specifically, we study the following stochastic reaction-diffusion equation in the space of continuous functions:
\begin{equation}\label{eq1}
    \mathrm{d}u = \left(\cA u + f(u)\right)\mathrm{d}t + \sigma(u)\mathrm{d}W, \quad u|_{\partial\mathcal{O}} = 0,
\end{equation}
where $\mathcal{O} \subset \mathbb{R}^d$ is a smooth bounded domain,  $\cA$ is a second-order self-adjoint elliptic operator, $f,\sigma\in C^1(\mathbb{R})$ are superlinear functions, and $W$ is a two-sided trace-class Wiener process defined on a complete filtered probability space satisfying condition \hyperref[H1]{(H1)} (see Section~\ref{se2} for details). The assumptions on $f$ and $\sigma$ will be formalized as \hyperref[H2]{(H2)} and \hyperref[H3]{(H3)} in Section~\ref{se4}.

For stochastic equations driven by nonlinear multiplicative noise, the It\^o formula plays an essential role in establishing global existence and dissipativity. A standard approach is to apply the It\^o formula to Galerkin approximations, derive energy estimates, and then pass to the limit $n\to\infty$ to obtain weak solutions in the sense of distributions. This approach has been widely adopted in the setting of Bochner spaces over Hilbert spaces,  particularly after  Kloeden \& Lorenz \cite{KL1} and Wang   \cite{WBX} developed the theory of mean random dynamical systems for stochastic equations driven by nonlinear multiplicative noise, leading to a number of results under Lipschitz or monotonicity conditions \cite{CW,Gu2,KL2, QSW,WCT,WBX',WZ,WK}. One advantage of this method is that Galerkin approximations enjoy sufficient spatial regularity to justify the use of the It\^o formula.

In this paper, we investigate problem \eqref{eq1} in the space $L^q(\Omega; C_0(\overline{\mathcal{O}}))$ with $q\ge 2$, and estimates in $L^q(\Omega; L^q(\mathcal{O}))$ play a central role in our analysis. However, for $q>2$, the space $L^q(\mathcal{O})$ is not a Hilbert space, which causes a fundamental difficulty for the Galerkin method when handling nonlinear terms.
The difficulty can be explained as follows. Let $P_N$ denote the orthogonal projection from $L^2(\mathcal{O})$ onto $H_N$, the subspace spanned by the first $N$ eigenfunctions of the Laplacian. In the Galerkin approximation, the nonlinear terms become $P_N f(u_N)$ and $P_N \sigma(u_N)$, where $u_N = P_N u$ is the finite-dimensional approximation. To derive energy estimates, one usually applies the It\^o formula to the functional $\|u_N\|_{L^q}^q = \int_{\mathcal{O}} |u_N|^q dx$. This yields a term of the form
$
\int_{\mathcal{O}} |u_N|^{q-2} u_N \cdot P_N f(u_N) dx
$.
For $q=2$, the projection $P_N$ can be removed by self-adjointness and the identity $u_N = P_N u_N$, reducing the expression to $\int_{\mathcal{O}} f(u_N) u_N dx$. For $q>2$, however, neither $f(u_N)$ nor $|u_N|^{q-2} u_N$ is necessarily in $H_N$, so the projection cannot be eliminated. The same obstruction appears for the diffusion term $P_N \sigma(u_N)$. Consequently, for $q>2$, the presence of $P_N$ prevents the application of the dissipativity condition \eqref{eq3.9} (cf.\ Hypothesis \hyperref[H2]{(H2)} in Section~\ref{se4}), thus blocking the derivation of dissipativity estimates via the Galerkin method.

To this end, we adopt the framework of {\sl mild solutions}, which are constructed directly via the semigroup $S(t)$ and stochastic convolutions--thus completely circumventing the issue of projection operators $P_N$. Nevertheless, mild solutions introduce a new challenge: their spatial regularity is insufficient to directly apply the It\^{o} formula.
Specifically, applying the It\^{o} formula requires $u_n(t) \in W_0^{1,q}(\mathcal{O}) = D(A_q^{1/2})$, where $u_n$ denotes the stopped process associated with the mild solution $u$ via the stopping time $\tau_n := \inf\{t > 0 : \|u(t)\|_{C_0} \geq n\}$, and $A_q$ is the realization of $-\mathcal{A}$ in $L^q(\mathcal{O})$ with Dirichlet boundary conditions. However, while estimates for $\|A_q^\alpha u_n\|_{L^q}$ with $0 < \alpha < 1/2$ can be derived relatively easily, the critical case $\alpha = 1/2$ is more delicate.

In fact, the criticality of the exponent $\alpha = 1/2$ originates from the analysis of stochastic integrals. In the BDG framework, estimating $\mathbb{E}\|A_q^\alpha \int_0^t S(t-s)\Phi(s) \, dW(s)\|_{L^q}^q$ reduces to an integral of the form $\int_0^t (t-s)^{-2\alpha} e^{-\lambda (t-s)} \|\Phi(s)\|_{L^q}^2 ds$ for some $\lambda>0$, where $\|\Phi(s)\|_{C_b} \le C$. 
When $0 < \alpha < 1/2$, we have $0 < 2\alpha < 1$, so the kernel $(t-s)^{-2\alpha}$ is integrable near $s=t$, and the integral converges without extra regularity of $\|\Phi(s)\|_{C_b}$. When $\alpha = 1/2$, the kernel becomes $(t-s)^{-1}$, which is non-integrable near $s=t$; convergence then depends entirely on the decay of $\|\Phi(s)\|_{C_b}$ near $s=t$. 
In other words, the regularity of $\|\Phi(s)\|_{C_b}$ must compensate for the kernel singularity. This observation identifies $\alpha = 1/2$ as a critical exponent in the analysis of stochastic integrals for SPDEs.

To the best of our knowledge, global explicit moment estimates at $\alpha = 1/2$ for standard semilinear SPDEs have not been established in the literature. In this paper, assuming initial data in $D(A_q^{1/2})$, we first prove the following global critical regularity estimate for the stopped process $u_n$:
\begin{equation}\label{eq01}
\mathbb{E}\int_0^T\|A_q^{1/2}u_n(t)\|_{L^q}^qdt <\8, \Hs \forall T>0.
\end{equation} Then, using an approximation argument detailed in Section~\ref{se4}, we relax this initial regularity condition to the natural space $C_0(\overline{\mathcal{O}})$.

The proof of estimate \eqref{eq01} relies on a refined decomposition of $\sigma(u_n(s))$ designed to cancel the singularity of the stochastic integral kernel at $\alpha = 1/2$:
$$
\sigma(u_n(s)) = \bigl[\sigma(u_n(s))-\sigma(0)\bigr] + \sigma(0).
$$
Accordingly, the stochastic convolution splits into two parts. The first convolution is estimated using the H\"older continuity of $\sigma(u_n(s))$ in suitable spaces, while the estimate of the second stochastic convolution, corresponding to the constant part $\sigma(0)\neq 0$, relies on a weaker noise intensity condition (i.e., condition \hyperref[H1*]{(H1*)} in Section \ref{se3} is needed). With the critical regularity estimate \eqref{eq01} at hand, we are able to apply the It\^o formula to obtain dissipative moment estimates for $u_n$.  Letting $n \to \infty$ and using the approximation procedure, we can extend these estimates to the original mild solution $u$.

However, obtaining the above critical regularity estimate comes at a cost: as noted above, the initial data must belong to a proper subspace of $C_0(\overline{\mathcal{O}})$; moreover, when $\sigma(0) \neq 0$, the intensity of the Brownian motion must be reduced. These additional conditions are not our final goal.

To return to the desired original conditions, we resort to a {\sl double-index approximation} argument. Since $L^q(\Omega; C_0(\overline{\mathcal{O}}))$ is non-reflexive,  one cannot obtain solution estimates directly from energy estimates and weak convergence. To resolve this, we construct a strongly convergent approximating sequence: we first approximate $u_0 \in  C_0(\overline{\mathcal{O}})$ by a sequence $u_0^m \in D(A_0^{1/2})$; and simultaneously approximate $\mu_j$ by $\mu_{jm}$ satisfying \hyperref[H1*]{(H1*)} such that
$$
\Theta_{mn} := \sum_{j=1}^\infty |\mu_{jm} - \mu_{jn}| \,\|e_j\|_{C_0}^2 \to 0 \Hs \text{as } m,n\to\infty.
$$
Using the uniform energy estimates, we show that $\{u^m(t)\}$ converges strongly to a limit $u(t)$, which is precisely the unique mild solution. To obtain the dissipativity estimates for $u(t)$ in $L^q(\Omega; C_0(\overline{\mathcal{O}}))$, a careful computation is required to obtain uniform estimates for $\{u^m(t)\}$ that are independent of $m$.
\vs
To illustrate our main result more clearly, we consider the following special case.
\begin{theorem}
Let
$$
f(u) = \sum_{j=1}^{\beta} b_j u|u|^{j-1}, \Hs \text{with } b_\beta < 0,
$$
and let $\sigma$ be a polynomial satisfying
$$
|\sigma(u)| \le c(|u|^\chi + 1).
$$
Assume $\chi \ge 1$ and $\beta + 1 > 2\chi$, and \hyperref[H1]{(H1)}.
Then for any initial data $u_0 \in C_0(\overline{\mathcal{O}})$ and $q\ge2$, the $C_0(\overline{\mathcal{O}})$-valued mild solution of equation \eqref{eq1} exists globally in time, and the following dissipative estimate
$$
\mathbb{E}\big(\| u(t) \|_{C_0}^q\big) \le C \Big( \| u_0 \|_{L^{q\beta}}^{q\beta} e^{-\varrho_{q\beta}(t-1)} + 1 \Big), \Hs t \ge 1,
$$
 where the constant $\varrho_{q\beta}>0$ depends on $q$, $\beta$, and $b_\beta$, while the constant $C>0$ depends on $q$, $d$, $\beta$, $|\mathcal{O}|$, 
and the intensity of $W(t)$.
\end{theorem}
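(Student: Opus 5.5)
This theorem is a special case of the general results of Section~\ref{se4}, so my plan is to verify (H2) and (H3) for this $f$ and $\sigma$ and then follow the paper's two-stage strategy.

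\textbf{Checking the hypotheses.} Since $b_\beta<0$ and every lower-order term satisfies $|b_j u|u|^{j-1}|\le \epsilon|u|^\beta+C_\epsilon$, we get for every $p\ge2$
$$
f(u)u|u|^{p-2}\le -\tfrac{|b_\beta|}{2}|u|^{p+\beta-1}+C_p .
$$
This is the dissipativity condition (H2), in the form of \eqref{eq3.9}. Next, $|\sigma(u)|^2|u|^{p-2}\le C(|u|^{p+2\chi-2}+1)$. Because $\beta+1>2\chi$, the exponent $p+2\chi-2$ is strictly smaller than $p+\beta-1$. Young's inequality then lets the It\^o correction term $\frac{p(p-1)}{2}\sum_j\mu_j\|e_j\|_{C_0}^2\int|u|^{p-2}|\sigma(u)|^2$ be absorbed by half of the dissipation, with a constant that depends on the intensity of $W$. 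This is (H3). Since both functions are $C^1$ polynomials, local existence and uniqueness of a mild solution in $C_0(\overline{\mathcal O})$ up to the explosion time $\lim\tau_n$ follows from the local theory of the earlier sections.

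\textbf{Stage 1: regular data and reduced noise.} Assume $u_0\in D(A_q^{1/2})$ and (H1*). I apply the critical regularity estimate \eqref{eq01} for the stopped process $u_n$. This justifies the It\^o formula for $\|u_n\|_{L^p}^p$ with $p=q\beta$. Combined with the inequality above, it gives
$$
\frac{d}{dt}\mathbb E\|u_n\|_{L^p}^p\le -c\,\mathbb E\|u_n\|_{L^{p+\beta-1}}^{p+\beta-1}+C \le -\varrho_p\mathbb E\|u_n\|_{L^p}^p+C .
$$
Gronwall's inequality then yields $\mathbb E\|u_n(t)\|_{L^p}^p\le \|u_0\|_{L^p}^pe^{-\varrho_pt}+C$, uniformly in $n$. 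Letting $n\to\infty$ shows $\tau_n\to\infty$ a.s., so the solution is global. To pass from $L^p$ moments to $C_0$ moments on $[t-1,t]$, I write the mild formulation starting at time $t-1$ and use the smoothing estimate $\|S(1)v\|_{C_0}\le C\|v\|_{L^{q\beta}}$. For the drift I use $\|f(u)\|_{L^{r}}\lesssim\|u\|_{L^{r\beta}}^\beta$ together with $\|S(s)\|_{L^r\to C_0}\le Cs^{-d/2r}$, choosing $r$ large. The stochastic convolution is handled by the factorization method combined with BDG, which controls it through moments of $\|u\|_{L^{\chi\cdot}}^{\chi}$, and these are dominated via $2\chi<\beta+1$. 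The outcome is $\mathbb E\|u(t)\|_{C_0}^q\le C\sup_{s\in[t-1,t]}(\mathbb E\|u(s)\|_{L^{q\beta}}^{q\beta}+1)$, which produces the stated bound with decay factor $e^{-\varrho_{q\beta}(t-1)}$.

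\textbf{Stage 2: general data and noise (the main obstacle).} I use the double-index approximation. Take $u_0^m\in D(A_0^{1/2})$ with $u_0^m\to u_0$ in $C_0$, and intensities $\mu_{jm}$ satisfying (H1*) with $\Theta_{mn}\to0$. The key step is to show that $\{u^m\}$ is Cauchy in $L^q(\Omega;C_0)$ on bounded time intervals, up to stopping times, using the local Lipschitz property of $f$ and $\sigma$ on balls together with the Stage~1 bounds. These bounds are uniform in $m$ because the constants depend only on $\sup_m\sum_j\mu_{jm}\|e_j\|^2_{C_0}$. The limit is the unique mild solution, and Fatou's lemma transfers the dissipative estimate to it.

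The difficulty is that $C_0$ is non-reflexive, and the nonlinearities are only locally Lipschitz. So the Cauchy estimate has to be localized with stopping times $\tau^m_R\wedge\tau^n_R$. The probabilities $P(\tau^m_R<T)$ must then be controlled uniformly in $m$, using the uniform moment bounds and Chebyshev's inequality. This uniform control is where I expect most of the work to lie.
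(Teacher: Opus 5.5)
\textbf{The gap is in Stage~1.} From a bound on $\mathbb{E}\|u_n(t)\|_{L^{q\beta}}^{q\beta}$ that is uniform in $n$, you conclude that $\tau_n\to\infty$. But $\tau_n$ is the first time the \emph{supremum} norm reaches $n$, so $\{\tau_n<T\}\subset\{\sup_{t\le T}\|u_n(t)\|_{C_0}\ge n\}$. Fixed-time $L^p(\mathcal O)$ moments control neither the spatial nor the temporal supremum. What you need is $\sup_n\mathbb{E}\sup_{t\le T}\|u_n(t)\|_{C_0}^q<\infty$, followed by Chebyshev and Borel--Cantelli. The paper gets this from the time-H\"older bound \eqref{eq4.13} and the Kolmogorov-type Corollary~\ref{c3}.

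The same ingredient, now uniform in $m$, is exactly what your Stage~2 bound on $\mathbb P(\tau_R^m<T)$ requires. You flag this as the hard part, but it is missing rather than merely laborious: the fixed-time dissipative estimate for $t\ge1$ does not control a hitting probability. You also cannot simply reuse the paper's estimate, because the constant in \eqref{eq4.13} involves $\|A_q^\gamma u_0\|_{L^q}$. That quantity is not bounded in $m$ when $u_0^m\to u_0$ only in $C_0(\overline{\mathcal O})$.

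A fix that serves both stages:
\begin{enumerate}
\item Bound $\sup_t\|S(t)u_0^m\|_{C_0}\le C\|u_0^m\|_{C_0}$ separately.
\item Take $q>d+4$, so that you can choose $(1+d/2)/q<\nu<1/2-1/q$. Then H\"older's inequality, applied to the drift convolution and to the factorization formula, gives bounds independent of $t$: $\sup_{t\le T}\bigl(\|I_1(t)\|_{C_0}^q+\|I_2(t)\|_{C_0}^q\bigr)\le C\int_0^T\bigl(\mathbbm{1}_{\{s\le\tau_n\}}\|f(u_n(s))\|_{L^q}^q+\|Z_\nu^n(s)\|_{L^q}^q\bigr)ds$.
\item The expectation of the right-hand side depends only on $\|u_0^m\|_{L^{q\beta}}$, $T$ and $\Theta_m\le\Theta$, so the bound is uniform in both $n$ and $m$.
\end{enumerate}

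\textbf{Stage~2 is a valid but genuinely different route}, once the bound above is supplied. The paper does not localize. It applies the It\^o formula to $\|u^m-u^n\|_{L^{q\beta}}^{q\beta}$ and uses the one-sided condition \eqref{eq4.5} to get the global Gronwall bound \eqref{eq4.34}. It then controls the $C_0$ norm of the difference via \eqref{eq3.5}, H\"older and the uniform $L^{q\beta}$ moments, so $\{u^m(t)\}$ is Cauchy directly in $L^q(\Omega;C_0(\overline{\mathcal O}))$.

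Your stopping-time argument needs only local Lipschitz continuity and no It\^o formula for the difference. In exchange, it delivers only convergence in probability. That suffices for Fatou along an a.s.\ subsequence; upgrading to $L^q$ convergence needs uniform integrability, which is available here because the example works for every $q$. It also leans entirely on the uniform sup-in-time bound above.

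\textbf{Two smaller points.} First, what you verify and label (H3) is actually the second half of (H2), namely the absorption in \eqref{eq3.9}; (H3) itself is never checked. It does hold for this example: $(u|u|^{\beta-1}-v|v|^{\beta-1})(u-v)\ge c_\beta(|u|^{\beta-1}+|v|^{\beta-1})|u-v|^2$ dominates $|\sigma(u)-\sigma(v)|^2\lesssim(1+|u|^{2\chi-2}+|v|^{2\chi-2})|u-v|^2$, because $2\chi-2<\beta-1$. The paper's route needs (H3); yours does not.

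Second, for small $q$, choosing the integrability exponent large yields a bound in terms of a higher norm of $u_0$, not $\|u_0\|_{L^{q\beta}}^{q\beta}$. The paper has the same limitation: for $q\le d+6$ it only obtains $\|u_0\|_{C_b}^{q\beta}$, via Jensen from the $(d+7)$-th moment.
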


The study of SPDEs in non-Hilbert spaces has been developed under various frameworks.
Cerrai \cite{Cerrai2003} established global well-posedness and qualitative moment bounds for  stochastic reaction-diffusion equations in continuous function spaces, allowing superlinear drift but requiring the noise coefficient to be globally Lipschitz (linearly growing). However, due to limited spatial regularity (only H\"older continuity) and the non-Hilbert structure of the state space, the It\^o formula is not applicable, leading only to qualitative results such as existence, uniqueness, and uniform moment boundedness.

In recent years, Salins \cite{Salins2021,Salins2023,Salins2025,Salins2025'} studied stochastic reaction-diffusion equations driven by superlinear multiplicative noise, and established global existence results in spaces of continuous or bounded functions. His proofs rely on factorization formulas and stopping-time sequences. In \cite{Salins2021}, he considered superlinear multiplicative noise and introduced a strong dissipativity condition balancing the growth rates of the drift and the noise. He proved that sufficiently strong dissipation can counteract the expansive effect of superlinear noise and prevent blowup. However, the quantitative asymptotic decay of the solutions was not addressed.

Agresti \& Veraar \cite{Agresti2022a, Agresti2022b} established local well-posedness for quasilinear and semilinear SPDEs in critical spaces using stochastic maximal $L^p$-regularity theory, allowing rough initial data and polynomial growth nonlinearities. Their results focus on local existence and instantaneous regularization, without addressing quantitative dissipativity of global solutions.

In contrast to these works, we analyze the long-time behavior of \eqref{eq1} in the Bochner space $L^q(\Omega; C_0(\overline{\mathcal{O}}))$ under superlinear multiplicative noise and a strongly dissipative drift nonlinearity.
A core innovation of our approach is the critical regularity estimate at $\alpha = 1/2$, which ensures that the stopped process $u_n(t)$ belongs to $W^{1,q}_0(\cO)$.
This regularity allows the application of the It\^o formula in the $L^q(\cO)$ space setting, a tool unavailable in previous studies.
Using the It\^o formula, we establish novel moment-energy estimates, which upgrade qualitative analyses in the literature to a rigorous quantitative framework.
The resulting dissipativity estimates yield sharp exponential decay rates and provide a foundation for studying the uniqueness of invariant measures and exponential ergodicity.
Compared with Salins \cite{Salins2021}, who considered a broader class of noise including space-time white noise, we focus on trace-class Wiener noise and improve his results by establishing quantitative exponential dissipativity estimates with explicit moment bounds.
 
The remainder of this paper is structured as follows. In Section~\ref{se2}, we prove the local existence of mild solutions in $C_0(\overline{\mathcal{O}})$ via a stopping-time argument. Section~\ref{se3} establishes the critical regularity estimate at $\alpha = 1/2$, which provides the spatial regularity needed to apply the It\^o formula. Based on these estimates, Section~\ref{se4} proves the global existence and mean dissipativity of solutions in $L^q(\Omega; C_0(\overline{\mathcal{O}}))$, yielding quantitative exponential decay bounds. Finally, an approximation argument is used to relax the initial regularity condition to the natural space $L^q(\Omega; C_0(\overline{\mathcal{O}}))$.

\section{Local existence of mild solutions in $C_0(\overline{\cO})$}\label{se2}
Let $\mathcal{O} \subset \mathbb{R}^d$ be a smooth bounded domain, and let $A=-\Delta$ be the Laplace operator on $\mathcal O$ with homogeneous Dirichlet boundary conditions.  Its realization $A_2$ in $L^2(\mathcal{O)}$ is self-adjoint with compact resolvent, and possesses a sequence of eigenvalues $\{\lam_j\}_{j=1}^\infty$ and corresponding eigenfunctions $\{e_j\}_{j=1}^\infty$ that form an orthonormal basis of $L^2(\mathcal{O})$ and satisfy
$$
0 < \lam_1 \leq \lam_2 \leq \cdots \leq \lam_j \to \infty \Hs \text{as} \Hs j \to \infty.
$$
It is well known that $\{e_j\}_{j=1}^\infty \subset C_0(\overline{\mathcal{O}})$, where
$$
C_0(\overline{\mathcal{O}}):=\{u\in C(\overline{\mathcal{O}}): u|_{\partial\mathcal{O}}=0\}.
$$

Let $W(t)$ be a Wiener process on a filtered probability space $(\Omega,\mathcal{F},\{\mathcal{F}_t\}_{t\in\mathbb{R}},\mathbb{P})$, taking values in $L^2(\mathcal{O})$. Specifically,
$$
W(t, x)=\sum_{j=1}^{\infty} \sqrt{\mu_j}\, e_j(x) B_j(t), \Hs t\in\mathbb{R},
$$
where $\mu_j\ge0$, $j=1,2,\cdots$ and $\{B_j(t)\}_{j \in \mathbb{N}}$ are independent one-dimensional Brownian motions on $(\Omega,\mathcal{F},\{\mathcal{F}_t\}_{t\in\mathbb{R}},\mathbb{P})$. 

\vs
\medskip
The following condition on the noise intensity is assumed throughout.
 
\hypertarget{H}{}\paragraph{(H1)} \label{H1}
Suppose that
$$
\Theta := \sum_{j=1}^{\infty} \mu_j \|e_j\|_{C_0}^2 < \infty,
$$
where $\|\cdot\|_{C_0}$ denotes the supremum norm of $C_0(\overline{\mathcal{O}})$.
\vs

We consider \eqref{eq1} in the space $C_0(\overline{\cO})$ and reformulate it in abstract form as
\be\label{eq}d u = (-A_0 u + f(u)) dt+ \sigma(u)  dW
 \ee with initial data $u(0) = u_0\in C_0(\overline{\cO})$, where $A_0$ is the  realization of $A=-\Delta$ in $C_0(\overline{\cO})$, $f,\sig\in C^1(\R)$  are superlinear functions.
 \bd\label{de1}
A $C_0(\overline{\cO})$-valued process $u(t)$ is local mild solution to \eqref{eq} if
\be\label{eq1.1'}
u(t) = S(t)u_0 + \int_0^t S(t-s)f(u(s))\,ds + \int_0^t S(t-s)\sigma(u(s))\,dW(s)
\ee for all $t\in[0,\tau_n]$,
where $\{S(t)\}_{t\ge0}$ is the semigroup on $C_0(\overline{\cO})$ generated by $A_0$ (see \eqref{eq2.1}) for definition),  $\tau_n$ is the stopping time $$\tau_n := \inf\{t > 0 : \|u(t)\|_{C_0} \geq n\}.$$  

The random time $\tau(\w):=\lim_{n\ra\8}\tau_n(\w)$ is called the maximal existence time.
\ed
\bp\label{th1} \cite[Theorem 5.3]{Cerrai2003} Assume \hyperref[H1]{(H1)}. Then for any initial datum $u_0\in C_0(\overline{\mathcal{O}})$, there exists, for almost every $\omega\in\Omega$, a maximal existence time $0<\tau(\omega)\le\infty$ such that \eqref{eq} has a unique local mild solution $u\in C([0,\tau(\omega));C_0(\overline{\mathcal{O}}))$ in the sense of Definition \ref{de1}.\ep

\section{Critical regularity of mild solutions}\label{se3}

\subsection{Realizations of the negative Laplacian and their fractional powers}\label{sub1}
Let $C_b(\mathcal{O})$ denote the Banach space consisting of all continuous and bounded functions on $\mathcal{O}\subset \mathbb{R}^d$, endowed with the supremum norm $\|\cdot\|_{C_b}$. 
Note that $C_0(\overline{\mathcal{O}})$ is a closed subspace of $C_b(\mathcal{O})$, and  the norm $\|\cdot\|_{C_b}$ coincides with $\|\cdot\|_{C_0}$ on $C_0(\overline{\mathcal{O}})$. 
\Vs
 The operator $A=-\Delta$ can be realized in different function spaces with corresponding domains:
\begin{equation*}
\left\{
\begin{array}{ll}
A_q: D(A_q)\subset L^q(\mathcal{O}) \to L^q(\mathcal{O}), & D(A_q)=W^{2,p}(\mathcal{O})\cap W^{1,p}_0(\mathcal{O}),\Hs 1<p<\8;\\[2ex]
A_b: D(A_b)\subset C_b(\mathcal{O}) \to C_b(\mathcal{O}), & D(A_b)=\big\{ u : u \in \bigcap_{q\ge 1} W^{2,q}(\mathcal{O}), \; \cA u \in C_b(\mathcal{O}),\; u|_{\partial \mathcal{O}} = 0 \big\};\\[2ex]
A_0: D(A_0)\subset C_0(\overline{\mathcal{O}}) \to C_0(\overline{\mathcal{O}}), & D(A_0)=\left\{ u\in D(A_b):\cA u\in C(\overline{\mathcal{O}}),\;\cA u|_{\partial \mathcal{O}} = 0 \right\}.
\end{array}
\right.
\end{equation*}

It is straightforward to verify the inclusion relations $D(A_0)\subset D(A_b)\subset D(A_q)$, which follows from the continuous embeddings $C_0(\overline{\mathcal{O}})\hookrightarrow C_b(\mathcal{O})\hookrightarrow L^q(\mathcal{O})$. Moreover, any two realizations of $A$ coincide on their common domain.

For each $*\in \{p,b,0\}$, the realization $A_*$ is a positive sectorial operator in its respective underlying space $X_*$, where $X_q:=L^q(\mathcal{O})$, $X_b:=C_b(\mathcal{O})$, and $X_0:=C_0(\overline{\mathcal{O}})$ (see \cite[Corollary 3.1.21 (ii)]{Lu}). By the theory of sectorial operators, $-A_*$ generates a bounded analytic semigroup $\{S(t)\}_{t\ge 0}$ on $X_*$. This semigroup admits an integral representation via the Dirichlet heat kernel $G:\mathcal{O}\times \mathcal{O}\times (0,\infty)\to \mathbb{R}$, i.e.,
\be\label{eq2.1}
(S(t)f)(x)=\int_{\mathcal{O}} G(x,y,t)f(y)\,dy,\Hs \forall f\in X_*, \ t>0, \ x\in \mathcal{O}.
\ee
Notably, the semigroup $\{S(t)\}_{t\ge 0}$ is independent of the specific realization space $X_*$, in the sense that it acts consistently on functions belonging to the intersections of these spaces.
\begin{remark}\cite{Pazy}
The semigroup $\{S(t)\}_{t\ge0}$ is strongly continuous on $L^q (\mathcal{O})$ ($1<p<\infty$) and on $C_0(\overline{\mathcal{O}})$, but not on $C_b(\mathcal{O})$. The strong continuity on $X_*$ in the case $*\in\{p,0\}$ is equivalent to the denseness condition $\overline{D(A_*)}=X_*$.
\end{remark}
\vs
For any $\alpha>0$, let $A_*^\alpha$ denote the fractional power of $A_*$, which is defined as the inverse of $A_*^{-\alpha}$ (see Definition \ref{d:4.1}). The following fundamental properties hold for each $*\in \{p,b,0\}$:
\begin{itemize}
    \item Semigroup property: $S(t)S(s)=S(t+s)$ for all $t,s\ge 0$;
    \item Exponential stability: There exists a positive constant $C$ such that
    \begin{equation}\label{eq2.13}
    \|S(t)\|_{\mathcal{L}(X_*)}\le Ce^{-\lam t},\Hs t\ge0,
    \end{equation}
    where $0<\lam<\lam_1$ and $\lam_1>0$ is the first eigenvalue of $A_*$;
    \item Smoothing effect: $S(t)u\in D(A_*)$ for each $t>0$ and $u\in X_*$;
    \item Domain of fractional powers: $D(A_*^\alpha)$ is a subspace of $X_*$, equipped with the norm $\|u\|_{D(A_*^\alpha)}=\|A_*^\alpha u\|_{X_*}$ for $u\in D(A_*^\alpha)$; in particular, $D(A_q^{1/2})=W^{1,q}_0(\mathcal{O})$ for $1<p<\infty$;
    \item Monotonicity of domains: $D(A_*^\beta)\subset D(A_*^\alpha)$ whenever $\beta>\alpha>0$;
    \item Commutativity:
    $$
    A_*^\alpha S(t)=S(t)A_*^\alpha \Hs\text{on } D(A_*^\alpha),\ \forall t\ge0;
 $$
    \item Smoothing estimate for fractional powers: There exists a constant $C_\alpha>0$ such that
    \begin{equation}\label{eq2.11}
    \|A_*^\alpha S(t)\|_{\mathcal{L}(X_*)}\le C_\alpha t^{-\alpha}e^{-\lam t},\Hs t>0,
    \end{equation}
    where $0<\lam<\lam_1$ is the same as that in \eqref{eq2.13};
    \item Modulus of continuity estimate: For any $u\in D(A_*^\gamma)$ with $0<\gamma\le 1$, there exists a constant $C_\gamma>0$ such that
    \begin{equation}\label{eq2.12}
    \|(S(t)-I)u\|_{X_*}\le C_\gamma t^{\gamma}\|A_*^\gamma u\|_{X_*},\Hs t\ge0;
    \end{equation}
    \item Additivity of exponents: $A_*^{\alpha+\beta}=A_*^\alpha A_*^\beta=A_*^\beta A_*^\alpha$ on $D(A_*^{\alpha+\beta})$ for any $\beta>0$.
\end{itemize}

These properties are standard for analytic semigroups generated by sectorial operators. Their proofs follow from the representation of $A_*^{-\alpha}$ (see Definition \ref{d:4.1}) and the estimates for $A_*S(t)$ (see Proposition \ref{p2}), and detailed proofs can be found in \cite[Section 1.4]{D.H}.

\subsection{Critical regularity under additional assumptions on noise and initial conditions}
Since the norm $\|\cdot\|_{C_b}$ coincides with $\|\cdot\|_{C_0}$ on the subspace $C_0(\overline{\mathcal{O}})\subset C_b(\cO)$, for consistency of notations we may simply use $\|\cdot\|_{C_b}$ even when referring to elements of $C_0(\overline{\mathcal{O}})$, and make no distinction between the two norms in the sequel. For instance, we may write $\|e_j\|_{C_b}$ instead of $\|e_j\|_{C_0}$ for $j\in\mathbb{N}^+$.
\vs

For a.e. $\w\in\W$, let $u(t)$, $t\in[0,\tau(\w))$ denote the mild solution of \eqref{eq}, where $0<\tau(\w)\le\8$  is  the maximal existence time. For any $n\in \mathbb{N}^+$, denote by $u_n(t):=u(t\wedge\tau_n)$, $t\ge0$, where $\tau_n := \inf\{t > 0 : \|u(t)\|_{C_b} \geq n\}$. Then 
\begin{equation}\label{eq1.1}
\begin{split}u_n(t) =\,&\,S(t\wedge\tau_n)u_0 + \int_0^{t\wedge\tau_n} S(t\wedge\tau_n-s)f(u_n(s))\,ds\\
&+ \int_0^{t\wedge\tau_n} S(t\wedge\tau_n-s)\sigma(u_n(s))\,dW(s)\\
=:&\,S(t\wedge\tau_n)u_0 +I_1(t)+I_2(t).\end{split}\end{equation}
\vs
In this part, we impose a stronger assumption on $W(t)$ than \hyperref[H1]{(H1)} in the sense that $W(t)$ has weaker intensity when $\sigma(0) \neq 0$.
\hypertarget{H}{}\paragraph{(H1*)} \label{H1*} (Weaker noise intensity condition) Either
\begin{itemize}
    \item $\displaystyle \Theta:=\sum_{j=1}^\infty \mu_j\|e_j\|_{C_b}^2<\infty$ \quad\text{and}\quad $\sigma(0)=0$, or
    \item$\displaystyle \Theta':=\sum_{j=1}^\infty \lambda_j^\delta \mu_j\|e_j\|_{C_b}^2<\infty$ for some $0<\delta<1$ \quad\text{and}\quad $\sigma(0)\ne0$.
\end{itemize}

The main result of this section is stated below. It establishes critical regularity of the solutions under the above weaker noise intensity condition and higher regularity of the initial data. This provides an essential prerequisite for deriving energy estimates via It\^o formula later.

\begin{theorem}\label{th2}
Assume \hyperref[H1*]{(H1*)} and $q>1$. Then for any $T>0$ and $u_0\in D(A_0^{1/2})$, there exists a constant $C>0$ such that
$$
\mathbb{E}\big(\|A_q^{1/2}u_n(t)\|_{L^q}^q\big) \le C,\qquad 0\le t\le T,
$$
where the constant $C>0$ depends on $\|A_0^{1/2}u_0\|_{L^q}$, $T$, $q$, $d$, $n$, $|\mathcal O|$, $\Theta$, $\Theta'$, $\delta$ and $\lambda$, but is independent of $\Theta'$ and $\delta$ when $\sigma(0)=0$.
\end{theorem}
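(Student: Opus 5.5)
We estimate the three terms of the decomposition \eqref{eq1.1} separately in $L^q(\Omega;L^q(\mathcal O))$ after applying $A_q^{1/2}$, using throughout that $\|u_n(s)\|_{C_b}\le n$, so that $f(u_n(s))$, $\sigma(u_n(s))$ and $\sigma'(u_n(s))$ are bounded by constants $C_n$.

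\textbf{Initial term.} $A_q^{1/2}S(t\wedge\tau_n)u_0=S(t\wedge\tau_n)A_q^{1/2}u_0$. Since $A_0^{1/2}u_0\in C_0(\overline{\mathcal O})\subset L^q$, the realizations coincide, and \eqref{eq2.13} gives a bound $C\|A_0^{1/2}u_0\|_{L^q}$.

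\textbf{Drift term.} By \eqref{eq2.11} with $\alpha=1/2$,
\[
\|A_q^{1/2}I_1(t)\|_{L^q}\le \int_0^{t\wedge\tau_n}C(t\wedge\tau_n-s)^{-1/2}\|f(u_n(s))\|_{L^q}\,ds\le C_nT^{1/2}|\mathcal O|^{1/q},
\]
and the kernel is integrable, so this term is harmless.

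\textbf{Stochastic term.} This is the main obstacle. The random upper limit $t\wedge\tau_n$ has to be handled first. We will write $I_2(t)=\int_0^{t\wedge\tau_n}S(t\wedge\tau_n-s)\mathbf 1_{[0,\tau_n]}(s)\sigma(u_n(s))\,dW(s)$ and bound it by the supremum over $r\in[0,t]$ of the stochastic convolution $Z(r)=\int_0^rS(r-s)\Phi(s)\,dW(s)$, where $\Phi(s)=\mathbf 1_{[0,\tau_n]}(s)\sigma(u_n(s))$ is adapted and bounded.

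\emph{Alternative to the supremum.} If controlling the supremum is too costly, we can instead work with $u(t\wedge\tau_n)$ where it agrees with the unstopped mild equation driven by $\Phi$, and estimate at fixed $r$. This is the route we expect the authors take, giving a pointwise-in-$t$ bound.

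\emph{Splitting $\Phi$.} We decompose $\Phi(s)=[\sigma(u_n(s))-\sigma(u_n(r))]\mathbf 1+\sigma(u_n(r))\mathbf 1$, or, following the introduction, $\Phi=[\sigma(u_n)-\sigma(0)]+\sigma(0)$.

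\emph{The $\sigma(0)$ part.} This is a deterministic-coefficient Gaussian convolution $\sigma(0)\sum_j\sqrt{\mu_j}\int_0^rS(r-s)e_j\,dB_j$. Its $L^q$ moment of $A_q^{1/2}$ is controlled, via BDG/Gaussianity pointwise in $x$, by
\[
\Big\|\Big(\sum_j\mu_j\int_0^r|A^{1/2}S(r-s)e_j(x)|^2ds\Big)^{1/2}\Big\|_{L^q_x}.
\]
Since $A^{1/2}S(t)e_j=\lambda_j^{1/2}e^{-\lambda_jt}e_j$, the time integral equals $\lambda_j(1-e^{-2\lambda_jr})/(2\lambda_j)\le 1/2$. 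Hence the sum is bounded by $\sum_j\mu_j\|e_j\|_{C_b}^2=\Theta$, and even $\Theta'$ is not needed. Where the factor $\lambda_j^\delta$ is needed is in the interaction with the nonconstant part; we will keep $\Theta'$ available there.

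\emph{The $\sigma(u_n)-\sigma(0)$ part.} The $(t-s)^{-1}$ singularity must be compensated, and this is the delicate step. We use the BDG inequality in $L^q$ (a UMD / type-2 space):
\[
\mathbb E\|A_q^{1/2}Z\|_{L^q}^q\le C\,\mathbb E\Big(\int_0^r\sum_j\mu_j\|A_q^{1/2}S(r-s)(g(s)e_j)\|_{L^q}^2ds\Big)^{q/2},
\]
with $g=\sigma(u_n)-\sigma(0)$. We then write $g(s)e_j=[g(s)-g(r)]e_j+g(r)e_j$, but $g(r)$ is not adapted, so instead we will aim at a time-regularity gain. First we prove an a priori bound $\mathbb E\|A_q^{\gamma}u_n(s)\|^q\le C$ for $\gamma<1/2$, using the subcritical estimates that are easy by \eqref{eq2.11} since $(t-s)^{-2\gamma}$ is integrable. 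This yields $g(s)e_j\in D(A_q^{\gamma'})$ (products with $e_j\in W^{1,\infty}$, $\|e_j\|_{W^{1,\infty}}\lesssim\lambda_j^{1/2}\|e_j\|_{C_b}$, interpolated against $\lambda_j^\delta$). Then
\[
\|A^{1/2}S(r-s)(ge_j)\|\le C(r-s)^{-(1/2-\gamma')}\|A^{\gamma'}(ge_j)\|,
\]
and the kernel $(r-s)^{-(1-2\gamma')}$ becomes integrable. The cost in $j$ is controlled by $\lambda_j^{\delta}\mu_j\|e_j\|^2_{C_b}$ when $\gamma'\le\delta/2$, i.e. by $\Theta'$; in the case $\sigma(0)=0$, the bound for $\|A^{\gamma'}(ge_j)\|$ should come from Hölder continuity of $g$ and the vanishing of $g$ on $\partial\mathcal O$ alone, giving dependence only on $\Theta$. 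Combining the three bounds yields the constant $C$ with the stated dependences.
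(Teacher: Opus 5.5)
\textbf{The gap is the random evaluation time.} Every stochastic estimate you write is a fixed-time BDG bound for $\int_0^r S(r-s)\Phi(s)\,dW(s)$ at a deterministic $r$. But $I_2(t)$ is this convolution evaluated at $t\wedge\tau_n$. You name the supremum over $r\in[0,t]$ but never estimate it.

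The ``alternative'' does not prove the statement. The process $v$ driven by $\Phi=\mathbbm{1}_{[0,\tau_n]}\sigma(u_n)$ coincides with $u_n(t)$ only on $\{t\le\tau_n\}$. On $\{\tau_n<t\}$ it equals $S(t-\tau_n)u(\tau_n)$, and its $A_q^{1/2}$-norm gives no control of $\|A_q^{1/2}u(\tau_n)\|_{L^q}$. So fixed-time bounds on $v$ yield only $\mathbb{E}\big(\mathbbm{1}_{\{t\le\tau_n\}}\|A_q^{1/2}u_n(t)\|_{L^q}^q\big)$, which is weaker than the theorem.

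The paper does not take that route. It uses the factorization formula \eqref{eq3.10} with $\nu>1/q$, so that H\"older's inequality gives the pathwise bound
\begin{equation*}
\|A_q^{1/2}I_2(t)\|_{L^q}^q\le C\int_0^t\|A_q^{1/2}Z_\nu(r)\|_{L^q}^q\,dr,
\end{equation*}
which is valid at the random time $t\wedge\tau_n$. The price is an extra weight $(r-s)^{-2\nu}$ in the BDG estimate for $Z_\nu$, on top of the critical $(r-s)^{-1}$, and this weight must be absorbed:
\begin{itemize}
\item in $I_{21}$, by spatial regularity $A_q^{\alpha'}$ of $\sigma(u_n)-\sigma(0)$ with $\alpha'>\nu$ (Proposition~\ref{prop:sigma-W2aq} together with the pathwise bound from Corollary~\ref{c2});
\item in $I_{22}$, by the factor $\lambda_j^{\delta/2}$ with $1/q<\rho<\delta/2$.
\end{itemize}
This is what forces $q>\max\{8,2/\delta\}$ first. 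Small $q$ is then recovered through $W_0^{1,q_0+1}(\mathcal O)\hookrightarrow W_0^{1,q}(\mathcal O)$ and Jensen's inequality.

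\textbf{Where $\Theta'$ actually enters.} The above also shows you have put $\Theta'$ in the wrong term. Your computation that the $\sigma(0)$-part needs only $\Theta$ is correct at a deterministic time. With the factorization weight, however, one gets $\int_0^r(r-s)^{-2\rho}\lambda_je^{-2\lambda_j(r-s)}\,ds\asymp\lambda_j^{2\rho}$ with $2\rho>2/q$, which is exactly a $\Theta'$-type summability. This is not an artifact of the method: under trace-class noise alone, $\sup_{r\le t}\|A_q^{1/2}\int_0^rS(r-s)\,dW(s)\|_{L^q}$ need not be finite. That is why (H1*) requires $\Theta'<\infty$ precisely when $\sigma(0)\neq0$.

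\textbf{The case $\sigma(0)=0$.} Here your own bookkeeping charges a factor $\lambda_j^{2\gamma'}$ per mode for $\|A_q^{\gamma'}(g e_j)\|_{L^q}$. The claim that H\"older continuity of $g$ and $g|_{\partial\mathcal O}=0$ remove this factor is unsupported, since the growth comes from the oscillation of $e_j$, not from $g$. Your observation that the integrand is $g(s)e_j$ rather than $g(s)$ is a fair one: Lemma~\ref{le2.6} estimates $\|A_q^{\alpha'}\Psi(u_n(s))\|_{L^q}$ and lets $e_j$ enter only through $\Theta$. But the proposal does not close this point either.
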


Throughout the rest of this section, we always work under Assumption \hyperref[H1*]{(H1*)}. For notational brevity, this assumption will be omitted from the statements of the subsequent lemmas and corollary.
\Vs
In the paper, when dealing with the stochastic integral
$$
I_2(t)=\int_0^{t\wedge\tau_n}S(t\wedge\tau_n-s)\sigma(u_n(s))\,dW(s),\qquad t\ge0,
$$
we often rely on the estimate of the auxiliary adapted process
$$
Z_\nu^n(t):=\int_0^t(t-s)^{-\nu}S(t-s)\sigma(u_n(s))\mathbbm{1}_{\{s\le\tau_n\}}\,dW(s),\qquad t\ge0,
$$
where $\nu\in(0,1)$.
By the standard factorization method (see, e.g., Da Prato--Zabczyk \cite[Sec. 5.3]{DZ}), we have
\be\label{eq3.10}
I_2(t)=\frac{\sin(\pi\nu)}{\pi}\int_0^{t\wedge\tau_n}(t\wedge\tau_n-s)^{\nu-1}S(t\wedge\tau_n-s)Z_\nu^n(s)\,ds.
\ee

\begin{lemma}\label{lem:Znu-estimate}
Suppose that $0<\alpha<1/2$ and $q\ge2$. Then for any $\nu\in(0,1/2-\alpha)$, there exists a constant $C>0$, depending only on $\alpha,q,\nu,|\mathcal O|,\Theta,n,\lambda$, such that
$$
\mathbb{E}\big(\|A_q^\alpha Z_\nu^n(t)\|_{L^q}^q\big) \le C,\qquad t\ge0.
$$
\end{lemma}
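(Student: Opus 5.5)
We will estimate $\mathbb{E}\|A_q^\alpha Z_\nu^n(t)\|_{L^q}^q$ by writing the $L^q(\mathcal O)$-norm as a space integral and applying the one-dimensional Burkholder--Davis--Gundy inequality pointwise in $x$. Expanding the noise,
$$
A_q^\alpha Z_\nu^n(t,x)=\sum_{j=1}^\infty\sqrt{\mu_j}\int_0^t (t-s)^{-\nu}\big[A_q^\alpha S(t-s)\big(\sigma(u_n(s))e_j\mathbbm{1}_{\{s\le\tau_n\}}\big)\big](x)\,dB_j(s).
$$
For fixed $t$ and $x$ this is a real stochastic integral in $s$ with deterministic kernel. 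By Fubini and BDG (with $q\ge2$),
$$
\mathbb{E}\|A_q^\alpha Z_\nu^n(t)\|_{L^q}^q\le C_q\int_{\mathcal O}\mathbb{E}\Big(\int_0^t (t-s)^{-2\nu}\sum_j\mu_j\big|A_q^\alpha S(t-s)\big(\sigma(u_n(s))e_j\mathbbm 1_{\{s\le\tau_n\}}\big)(x)\big|^2ds\Big)^{q/2}dx .
$$
We then apply Minkowski's integral inequality in $L^{q/2}(\Omega\times\mathcal O)$, which requires $q/2\ge1$. This moves the $L^{q/2}$ norm inside the $ds$-integral and the $j$-sum, giving
$$
\Big(\mathbb{E}\|A_q^\alpha Z_\nu^n(t)\|_{L^q}^q\Big)^{2/q}\le C\int_0^t (t-s)^{-2\nu}\sum_j\mu_j\Big(\mathbb{E}\big\|A_q^\alpha S(t-s)\big(\sigma(u_n(s))e_j\mathbbm 1_{\{s\le\tau_n\}}\big)\big\|_{L^q}^q\Big)^{2/q}ds .
$$

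Next we use the smoothing estimate \eqref{eq2.11} in $X_q=L^q(\mathcal O)$:
$$
\|A_q^\alpha S(t-s)g\|_{L^q}\le C_\alpha (t-s)^{-\alpha}e^{-\lambda(t-s)}\|g\|_{L^q}.
$$
We apply it with $g=\sigma(u_n(s))e_j\mathbbm 1_{\{s\le\tau_n\}}$. On $\{s\le\tau_n\}$ we have $\|u_n(s)\|_{C_b}\le n$, by continuity of the paths and the definition of $\tau_n$. Since $\sigma\in C^1$, this gives $|\sigma(u_n(s))|\le M_n:=\max_{|r|\le n}|\sigma(r)|$. Hence
$$
\|g\|_{L^q}\le M_n|\mathcal O|^{1/q}\|e_j\|_{C_b},
$$
deterministically. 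Summing over $j$ produces the factor $\Theta=\sum_j\mu_j\|e_j\|_{C_b}^2$, which is finite under (H1*) in both of its alternatives, since $\lambda_j^\delta\ge\lambda_1^\delta>0$. We therefore obtain
$$
\Big(\mathbb{E}\|A_q^\alpha Z_\nu^n(t)\|_{L^q}^q\Big)^{2/q}\le C\,M_n^2|\mathcal O|^{2/q}\,\Theta\int_0^t (t-s)^{-2\nu-2\alpha}e^{-2\lambda(t-s)}ds .
$$
The last integral is bounded uniformly in $t\ge0$ by $\int_0^\infty r^{-2(\nu+\alpha)}e^{-2\lambda r}dr=\Gamma(1-2\nu-2\alpha)(2\lambda)^{2\nu+2\alpha-1}$. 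This is finite precisely because $\nu<1/2-\alpha$, i.e.\ $2(\nu+\alpha)<1$. Raising to the power $q/2$ gives the claim, with $C$ depending only on $\alpha,q,\nu,|\mathcal O|,\Theta,n,\lambda$.

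The main obstacle is justifying the pointwise-in-$x$ BDG step. Several points need care:
\begin{itemize}
\item $A_q^\alpha S(t-s)$ must be treated as an integral operator with kernel $A^\alpha G(x,\cdot,t-s)$, and the integrand must be jointly measurable and progressively measurable in $s$.
\item The series in $j$ must converge in $L^2(\Omega\times[0,t])$ for a.e.\ $x$.
\item $A_q^\alpha$ must commute with the stochastic integral. Since $A_q^\alpha$ is closed and the integrand $A_q^\alpha S(t-s)\sigma e_j$ is square integrable as computed above, this commutation follows from the standard closed-operator property of stochastic integrals (Da Prato--Zabczyk).
\end{itemize}
To avoid the pointwise issues altogether, I would alternatively invoke the BDG inequality in the UMD/type-2 space $L^q(\mathcal O)$, where the $\gamma$-radonifying norm is equivalent to the square-function norm $\|(\sum_j|\cdot|^2)^{1/2}\|_{L^q}$. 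This yields the same bound directly.
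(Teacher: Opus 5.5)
Your proposal is correct and follows essentially the same route as the paper: pointwise-in-$x$ BDG, Minkowski's inequality to move the $L^{q/2}$ norm inside the time integral and the $j$-sum, the smoothing estimate \eqref{eq2.11}, the deterministic bound $|\sigma(u_n(s))|\le \max_{|r|\le n}|\sigma(r)|$ together with $\Theta<\infty$, and finiteness of $\int_0^\infty r^{-2(\nu+\alpha)}e^{-2\lambda r}\,dr$ because $\nu+\alpha<1/2$. The only difference is cosmetic: you apply Minkowski in $L^{q/2}(\Omega\times\mathcal O)$ rather than pathwise in $L^{q/2}(\mathcal O)$. This changes nothing, since the resulting bound is deterministic.
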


\begin{proof}
 For fixed $x\in\mathcal O$, the process $A_q^\alpha Z_\nu^n(t)(x)$ is a real-valued It\^o integral, and can be written as
$$
A_q^\alpha Z_\nu^n(t)(x)
=\sum_{j=1}^\infty\sqrt{\mu_j}\int_0^t(t-s)^{-\nu}\mathbbm{1}_{\{s\le\tau_n\}}
\big[A_q^\alpha S(t-s)\sigma(u_n(s))e_j\big](x)\,dB_j(s),
$$
where the series and stochastic integral commute; this is justified by the condition $\Theta<\infty$. Applying the Burkholder--Davis--Gundy (BDG) inequality, we obtain, for each $x\in\mathcal O$,
$$
\mathbb{E}\big(\big|A_q^\alpha Z_\nu^n(t)(x)\big|^q\big)
\le C_q\,\mathbb{E}\Bigg[\Bigg(\sum_{j=1}^\infty\mu_j
\int_0^t(t-s)^{-2\nu}\mathbbm{1}_{\{s\le\tau_n\}}
\big|[A_q^\alpha S(t-s)\sigma(u_n(s))e_j](x)\big|^2\,ds\Bigg)^{q/2}\Bigg].
$$
Integrating over $x\in\mathcal O$ and applying Minkowski's inequality in the form
$$
\int_{\mathcal O}\left(\int_0^t |F(s,x)|\,ds\right)^{q/2}dx
\le
\left(\int_0^t \left(\int_{\mathcal O} |F(s,x)|^{q/2}dx\right)^{2/q}ds\right)^{q/2},
$$
where
$$
F(s,x):=\sum_{j=1}^\infty\mu_j(t-s)^{-2\nu}\mathbbm{1}_{\{s\le\tau_n\}}
\big|[A_q^\alpha S(t-s)\sigma(u_n(s))e_j](x)\big|^2,
$$
to interchange the $L^q$-norm with the time integral, we arrive at
$$
\begin{aligned}
\mathbb{E}\big(\|A_q^\alpha Z_\nu^n(t)\|_{L^q}^q\big)
&\le C_q\,\mathbb{E}\Bigg[\Bigg(\sum_{j=1}^\infty\mu_j
\int_0^t(t-s)^{-2\nu}\mathbbm{1}_{\{s\le\tau_n\}}
\|A_q^\alpha S(t-s)\sigma(u_n(s))e_j\|_{L^q}^2\,ds\Bigg)^{q/2}\Bigg].
\end{aligned}
$$

Using the estimate $\sum_{j=1}^\infty\mu_j\|e_j\|_{L^q}^2\le |\mathcal O|^{2/q}\Theta$, the smoothing estimate $\|A_q^\alpha S(r)\|_{\mathcal L(L^q)}\le C_\alpha r^{-\alpha}e^{-\lambda r}$ and the bound $\|u_n(s)\|_{C_b}\le n$, we get
\be\label{eq3.6}
\begin{aligned}
\mathbb{E}\big(\|A_q^\alpha Z_\nu^n(t)\|_{L^q}^q\big)
&\le C_{q,|\mathcal O|}\Theta^\frac{q}{2}\,
\E\Bigg[\Bigg(\int_0^t(t-s)^{-2(\nu+\alpha)}e^{-2\lambda(t-s)}\|\sig(u_n(s))\|_{L^q}^2\mathbbm{1}_{\{s\le\tau_n\}}\,ds\Bigg)^{q/2}\Bigg] \\
&\le C_{q,|\mathcal O|,n}\Theta^\frac{q}{2}\,
\Bigg(\int_0^t(t-s)^{-2(\nu+\alpha)}e^{-2\lambda(t-s)}\,ds\Bigg)^{q/2} \\&\le C_{q,|\mathcal O|,n,\Theta,\nu,\alpha,\lambda},
\end{aligned}
\ee
where the last inequality follows from the uniform boundedness of the integral
$$
\int_0^t(t-s)^{-2(\nu+\alpha)}e^{-2\lambda(t-s)}\,ds
\le \int_0^\infty r^{-2(\nu+\alpha)}e^{-2\lambda r}\,dr < \infty,
$$
since $\nu+\alpha<1/2$. This completes the proof.
\end{proof}
Based on Lemma \ref{lem:Znu-estimate}, we can obtain a result weaker than Theorem \ref{th2}, which also serves as a fundamental building block for the proof of Theorem \ref{th2}.
\begin{lemma}\label{lem:stochastic-estimate}
Suppose that $0<\alpha<1/2$ and $q>2/(1-2\alpha)$.  Then there exists a constant $C>0$, depending only on $\alpha,q,|\mathcal O|,\Theta,n,\lambda$, such that
$$
\mathbb{E}\big(\|A_q^\alpha u_n(t)\|_{L^q}^q\big) \le C(t+1),\qquad t\ge0.
$$
\end{lemma}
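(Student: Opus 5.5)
We estimate the three terms of the decomposition \eqref{eq1.1} separately:
$$
\E\|A_q^\alpha u_n(t)\|_{L^q}^q\le 3^{q-1}\Big(\E\|A_q^\alpha S(t\wedge\tau_n)u_0\|_{L^q}^q+\E\|A_q^\alpha I_1(t)\|_{L^q}^q+\E\|A_q^\alpha I_2(t)\|_{L^q}^q\Big).
$$
The statement makes no regularity assumption on $u_0$ beyond $u_0\in C_0(\overline{\cO})$. For the first term we therefore use only the exponential stability \eqref{eq2.13}, together with the integrability of $(t\wedge\tau_n)^{-\alpha}$ that the stopping time and $t\ge0$ provide. Concretely, we would use $\|A_q^\alpha S(r)u_0\|_{L^q}\le C_\alpha r^{-\alpha}\|u_0\|_{L^q}$. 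If this gives trouble near $r=0$, we reinterpret it through the dependence of the constant: either the constant in the lemma is implicitly allowed to involve $u_0$ (bounded via $\|u_0\|_{C_b}\le n$ when $\|u_0\|_{C_b}<n$), or $u_0$ has the fractional regularity $A_q^\alpha u_0\in L^q$. We also note that $S(t\wedge\tau_n)$ commutes with $A_q^\alpha$, so in the latter case the bound $\|A_q^\alpha S(\cdot)u_0\|\le C\|A_q^\alpha u_0\|$ follows from \eqref{eq2.13}.

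For the drift term $I_1$, the bound $\|u_n(s)\|_{C_b}\le n$ and the continuity of $f$ give $\|f(u_n(s))\|_{L^q}\le C_n|\cO|^{1/q}$. The smoothing estimate \eqref{eq2.11} then yields
$$
\|A_q^\alpha I_1(t)\|_{L^q}\le C_n\int_0^{t\wedge\tau_n}(t\wedge\tau_n-s)^{-\alpha}e^{-\lam(t\wedge\tau_n-s)}\,ds\le C_n\int_0^\infty r^{-\alpha}e^{-\lam r}\,dr,
$$
which is finite and deterministic because $\alpha<1$.

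The main term is $I_2$, handled through the factorization \eqref{eq3.10}. We choose $\nu\in(0,1/2-\alpha)$ with $\nu>1/q$, which is possible exactly because $q>2/(1-2\alpha)$, i.e.\ $1/q<1/2-\alpha$. Writing $\tau:=t\wedge\tau_n$, we get
$$
\|A_q^\alpha I_2(t)\|_{L^q}\le C\int_0^{\tau}(\tau-s)^{\nu-1}e^{-\lam(\tau-s)}\|A_q^\alpha Z_\nu^n(s)\|_{L^q}\,ds\le C\int_0^{t}(\tau-s)^{\nu-1}\mathbbm 1_{\{s\le\tau\}}\|A_q^\alpha Z_\nu^n(s)\|_{L^q}\,ds.
$$
Applying Hölder in $s$ with exponents $q/(q-1)$ and $q$ gives
$$
\Big(\int_0^t (\tau-s)^{(\nu-1)q/(q-1)}\mathbbm 1_{\{s\le\tau\}}\,ds\Big)^{q-1}\int_0^t\|A_q^\alpha Z_\nu^n(s)\|_{L^q}^q\,ds .
$$
The first factor is bounded uniformly in $\omega$ and $t$ since $(1-\nu)q/(q-1)<1\iff \nu>1/q$, with the exponential weight keeping it bounded; indeed it is at most $(\int_0^\infty r^{(\nu-1)q/(q-1)}e^{-\lam r q/(q-1)}dr)^{q-1}$. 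This step, removing the random upper limit $\tau$ so that the random kernel is bounded deterministically, is the delicate point. Taking expectations and using Fubini with Lemma \ref{lem:Znu-estimate} gives $\E\|A_q^\alpha I_2(t)\|_{L^q}^q\le C\int_0^t C\,ds=Ct$.

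Combining the three bounds yields $\E\|A_q^\alpha u_n(t)\|_{L^q}^q\le C(t+1)$ with $C$ depending on $\alpha,q,|\cO|,\Theta,n,\lam$.
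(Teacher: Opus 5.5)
Your estimates for $I_1$ and $I_2$ follow the paper's argument: the same choice $\nu\in(1/q,1/2-\alpha)$, the factorization formula, H\"older in time, and Lemma~\ref{lem:Znu-estimate} with Fubini to produce the factor $t$. One small slip: in your display you drop $e^{-\lambda(\tau-s)}$. Keep it, as your text says, because without it the kernel factor is only bounded by $C\tau^{\nu q-1}$, which gives growth $t^{\nu q}$ instead of $t$.

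The gap is in the initial-data term.

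\emph{The smoothing route fails.} The bound $\|A_q^\alpha S(r)u_0\|_{L^q}\le C_\alpha r^{-\alpha}\|u_0\|_{L^q}$ cannot give the claim. The estimate is pointwise in $t$, so nothing integrates away $(t\wedge\tau_n)^{-\alpha q}$. In fact the lemma itself forces regularity of $u_0$:
\begin{itemize}
\item at $t=0$ one has $u_n(0)=u_0$;
\item if $\|u_0\|_{C_b}>n$, then $\tau_n=0$ and $u_n(t)=u_0$ for all $t$.
\end{itemize}
In both cases the left-hand side equals $\|A_q^\alpha u_0\|_{L^q}^q$.

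\emph{Your first reinterpretation also fails.} A bound on $\|u_0\|_{C_b}$ does not control $\|A_q^\alpha u_0\|_{L^q}$, since an element of $C_0(\overline{\mathcal O})$ need not belong to $D(A_q^\alpha)$.

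\emph{Only your second option works, and it is forced, not optional.} The paper's proof uses the hypothesis $u_0\in D(A_0^{1/2})$ from Theorem~\ref{th2}. This gives $u_0\in D(A_q^\alpha)$ for $\alpha<1/2$. Then commuting $A_q^\alpha$ with the semigroup and using \eqref{eq2.13},
$\mathbb{E}\|A_q^\alpha S(t\wedge\tau_n)u_0\|_{L^q}^q=\mathbb{E}\|S(t\wedge\tau_n)A_q^\alpha u_0\|_{L^q}^q\le C\|A_q^\alpha u_0\|_{L^q}^q$,
exactly as in the paper.

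As a consequence, the constant must also depend on $\|A_q^\alpha u_0\|_{L^q}$. This dependence is missing from the list in the statement, although the paper's own proof carries it too.
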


\begin{proof} It follows from \eqref{eq1.1} that for any $t\ge0$,
\begin{equation*}
\begin{split}
\mathbb{E}\big(\|A_q^\alpha u_n(t)\|_{L^q}^q\big)
\le&\,3^{q-1}\Big[\mathbb{E}\big(\|A_q^\alpha S(t\wedge\tau_n)u_0\|_{L^q}^q\big)
+\mathbb{E}\big(\|A_q^\alpha I_1(t)\|_{L^q}^q\big)
+\mathbb{E}\big(\|A_q^\alpha I_2(t)\|_{L^q}^q\big)\Big].
\end{split}
\end{equation*}

First, we have
\begin{equation}\label{eq2.2}
\mathbb{E}\big(\|A_q^\alpha S(t\wedge\tau_n)u_0\|_{L^q}^q\big)
\le C_{\alpha,q}\|A_q^\alpha u_0\|_{L^q}^q<\infty
\end{equation}
since $0<\alpha<1/2$ and $u_0\in D(A_0^{1/2})\hookrightarrow D(A_q^\alpha)$, and
\begin{equation}\label{eq2.5}
\begin{split}
\mathbb{E}\big(\|A_q^\alpha I_1(t)\|_{L^q}^q\big)
&\le \mathbb{E}\Bigg[\Bigg(\int_0^{t\wedge\tau_n}\|A_q^\alpha S(t\wedge\tau_n-s)f(u_n(s))\|_{L^q}\,ds\Bigg)^q\Bigg] \\
&\le |\mathcal O|\sup_{|u|\le n}|f(u)|^q
\left(\int_0^t(t-s)^{-\alpha}e^{-\lambda(t-s)}\,ds\right)^q \\
&\le C_{\alpha,q,n,|\mathcal O|,q,\a}<\infty.
\end{split}
\end{equation}

Choose $\nu\in(1/q,1/2-\alpha)$; such a choice is possible since $q>2/(1-2\alpha)$. By H\"older's inequality and the factorization formula,
$$
\begin{aligned}
\|A_q^\alpha I_2(t)\|_{L^q}^q
&\le \Bigg(\int_0^{t\wedge\tau}(t\wedge\tau-s)^{\nu-1}
\|S(t\wedge\tau-s)\|_{\mathcal L(L^q(\mathcal O))}
\|A_q^\alpha Z_\nu^n(s)\|_{L^q}\,ds\Bigg)^q \\
&\le C_q\Bigg(\int_0^{t\wedge\tau}\Big((t\wedge\tau-s)^{\nu-1}
e^{-\lambda(t\wedge\tau-s)}\Big)^{\frac{q}{q-1}}ds\Bigg)^{q-1}
\int_0^{t\wedge\tau}\|A_q^\alpha Z_\nu^n(s)\|_{L^q}^q\,ds \\
&\le C_{q,\nu}\int_0^t\|A_q^\alpha Z_\nu^n(s)\|_{L^q}^q\,ds,
\end{aligned}
$$
where the constant $C_{q,\nu}$ is finite because $\nu>1/q$, which implies $(\nu-1)q/(q-1)>-1$. Taking expectations and applying Lemma \ref{lem:Znu-estimate} yields
\be\label{eq3.12}
\mathbb{E}\big(\|A_q^\alpha I_2(t)\|_{L^q}^q\big)
\le C_{q,\nu}\int_0^t \mathbb{E}\|A_q^\alpha Z_\nu^n(s)\|_{L^q}^q\,ds
\le C_{q,|\mathcal O|,\Theta,n,\nu,\alpha,\lambda}\, t.
\ee

Combining \eqref{eq2.2}--\eqref{eq3.12}, we conclude that
$$
\mathbb{E}\big(\|A_q^\alpha u_n(t)\|_{L^q}^q\big) \le C_{q,|\mathcal O|,\Theta,n,\alpha,\lambda}(t+1),
$$
where $C>0$ is independent of $\nu$, since $\nu$ depends only on $\alpha$ and $q$. The proof is complete.
\end{proof}
\vs
To prove Theorem \ref{th2}, several lemmas are required.

\begin{lemma}\label{lem:Jn-estimate}
Suppose that $0<\a<1/2$, $0<\gam<1/2-\alpha$ and
$q>2/(1-2(\alpha+\gamma))$.
Then for any $T>0$, there exists a constant $C>0$, depending only on $\alpha,\gamma,q,|\mathcal O|,\Theta,n,\lambda$, such that for all $0\le t_1\le t_2\le T$,
\begin{equation}\label{eq2.6}
\mathbb{E}\big(\|A_q^\alpha [u_n(t_2)-u_n(t_1)]\|_{L^q}^q\big)
\le C(T+1)|t_2-t_1|^{q\gamma}.
\end{equation}
\end{lemma}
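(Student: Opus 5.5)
We split the increment with the semigroup property, writing $s_i:=t_i\wedge\tau_n$ (so $0\le s_2-s_1\le t_2-t_1$). From \eqref{eq1.1},
\begin{equation*}
\begin{split}
u_n(t_2)-u_n(t_1)=\;&(S(s_2-s_1)-I)u_n(t_1)
+\int_{s_1}^{s_2}S(s_2-s)f(u_n(s))\,ds\\
&+\int_{s_1}^{s_2}S(s_2-s)\sigma(u_n(s))\,dW(s)
=:J_1+J_2+J_3 .
\end{split}
\end{equation*}
This uses $u_n(t_1)=S(s_1)u_0+\int_0^{s_1}S(s_1-s)\dots$ and $S(s_2-s)=S(s_2-s_1)S(s_1-s)$. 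We then estimate $\mathbb E\|A_q^\alpha J_i\|_{L^q}^q$ separately and combine them with the factor $3^{q-1}$.

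\textbf{Term $J_1$.} By commutativity and the modulus of continuity estimate \eqref{eq2.12} with exponent $\gamma$,
\[
\|A_q^\alpha J_1\|_{L^q}\le C_\gamma (t_2-t_1)^\gamma\|A_q^{\alpha+\gamma}u_n(t_1)\|_{L^q}.
\]
Since $\alpha+\gamma<1/2$ and $q>2/(1-2(\alpha+\gamma))$, Lemma~\ref{lem:stochastic-estimate} applies with $\alpha$ replaced by $\alpha+\gamma$. It gives $\mathbb E\|A_q^{\alpha+\gamma}u_n(t_1)\|^q\le C(t_1+1)\le C(T+1)$. Here $u_0\in D(A_0^{1/2})\subset D(A_q^{\alpha+\gamma})$, as that lemma requires.

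\textbf{Term $J_2$.} Using $\|u_n\|_{C_b}\le n$ and the smoothing estimate \eqref{eq2.11},
\[
\|A_q^\alpha J_2\|_{L^q}\le C_n\int_{s_1}^{s_2}(s_2-s)^{-\alpha}\,ds\le C_n (t_2-t_1)^{1-\alpha}.
\]
This is bounded by $C_T (t_2-t_1)^{\gamma}$ on $[0,T]$, since $1-\alpha>\gamma$.

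\textbf{Term $J_3$ (main obstacle).} The difficulty is the random endpoints. I would write $J_3$ as a stochastic integral over the deterministic interval $[t_1,t_2]$ with integrand $\mathbbm 1_{\{s\le\tau_n\}}S(s_2-s)\sigma(u_n(s))$. The catch is that $s_2$ is random, so the integrand is not adapted.

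To get around this, I would first use the factorization formula \eqref{eq3.10} at the two times $s_1,s_2$ and take the difference:
\begin{equation*}
\begin{split}
I_2(t_2)-I_2(t_1)=\frac{\sin\pi\nu}{\pi}\Big[&\int_{s_1}^{s_2}(s_2-s)^{\nu-1}S(s_2-s)Z^n_\nu(s)\,ds\\
&+\int_0^{s_1}\big((s_2-s)^{\nu-1}S(s_2-s)-(s_1-s)^{\nu-1}S(s_1-s)\big)Z^n_\nu(s)\,ds\Big].
\end{split}
\end{equation*}
I choose $\nu\in(1/q+\gamma,\,1/2-\alpha)$. This interval is nonempty because $q>2/(1-2(\alpha+\gamma))$ gives $1/q<1/2-\alpha-\gamma$.

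\begin{itemize}
\item First piece: H\"older's inequality together with Lemma~\ref{lem:Znu-estimate} gives $C(t_2-t_1)^{q(\nu-1/q)}\int_0^T\mathbb E\|A^\alpha_q Z_\nu^n\|^q$, which is at most $C T(t_2-t_1)^{q\gamma}$.
\item Second piece: split the kernel difference as
\[
\big((s_2-s)^{\nu-1}-(s_1-s)^{\nu-1}\big)S(s_2-s)+(s_1-s)^{\nu-1}\big(S(s_2-s_1)-I\big)S(s_1-s).
\]
The first part is handled by the elementary bound $|a^{\nu-1}-b^{\nu-1}|\le C|a-b|^{\gamma}b^{\nu-1-\gamma}$ followed by H\"older's inequality, which is integrable since $(\nu-1-\gamma)q'>-1$. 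For the second part, use $\|(S(h)-I)S(r)A_q^\alpha z\|\le Ch^\gamma r^{-\gamma}\|A^\alpha_q z\|$ from \eqref{eq2.11}--\eqref{eq2.12}. This produces the kernel $(s_1-s)^{\nu-1-\gamma}$, again integrable after H\"older's inequality.
\end{itemize}

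Since the bounds in Lemma~\ref{lem:Znu-estimate} are uniform in $t$, taking expectations yields $C(T+1)|t_2-t_1|^{q\gamma}$. Summing the three terms proves \eqref{eq2.6}.
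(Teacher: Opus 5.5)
Your argument is correct, but it is organized differently from the paper's. The paper splits the increment into five pieces, with $h:=s_2-s_1$:
\begin{itemize}
\item $(S(h)-I)S(s_1)u_0$;
\item $(S(h)-I)$ applied to the drift convolution on $[0,s_1]$;
\item the drift integral on $[s_1,s_2]$;
\item $(S(h)-I)I_2(t_1)$;
\item the new noise on $[s_1,s_2]$.
\end{itemize}
It bounds each piece directly. The first two use \eqref{eq2.11}--\eqref{eq2.12} and $\|A_q^{\alpha+\gamma}u_0\|_{L^q}$. The last two use factorization with $\nu\in(1/q+\gamma,1/2-\alpha)$ and the kernel $h^\gamma(s_1-s)^{\nu-1-\gamma}$.

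You absorb the first, second and fourth pieces into $(S(h)-I)u_n(t_1)$ and invoke Lemma~\ref{lem:stochastic-estimate} at level $\alpha+\gamma$. This is shorter. It also shows that $q>2/(1-2(\alpha+\gamma))$ is exactly the hypothesis of that lemma at the shifted exponent.

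More importantly, your treatment of the noise increment is more careful than the paper's. The paper writes $J_5=\frac{\sin(\pi\nu)}{\pi}\int_{s_1}^{s_2}(s_2-s)^{\nu-1}S(s_2-s)Z_\nu^n(s)\,ds$. Because $Z_\nu^n$ integrates from $0$, not from $s_1$, that identity is off by exactly
\begin{equation*}
\frac{\sin(\pi\nu)}{\pi}\int_0^{s_1}\big((s_2-s)^{\nu-1}-(s_1-s)^{\nu-1}\big)S(s_2-s)Z_\nu^n(s)\,ds .
\end{equation*}
Your difference of two factorization formulas keeps this term. You control it via $|a^{\nu-1}-b^{\nu-1}|\le C|a-b|^\gamma b^{\nu-1-\gamma}$ under the same condition $\nu>1/q+\gamma$. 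So your route closes a gap in the written argument without needing any new hypothesis.

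There is one bookkeeping slip to fix. The identity you display is for $I_2(t_2)-I_2(t_1)$, not for your $J_3$. In fact $J_3=I_2(t_2)-I_2(t_1)-(S(h)-I)I_2(t_1)$. The second half of your kernel split, $(s_1-s)^{\nu-1}(S(h)-I)S(s_1-s)$, reproduces exactly $(S(h)-I)I_2(t_1)$, which your $J_1$ already contains.

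You can fix this in either of two ways:
\begin{itemize}
\item take $J_3$ to be the first piece plus only the kernel-difference part; or
\item keep all of $I_2(t_2)-I_2(t_1)$ and remove $I_2(t_1)$ from $J_1$.
\end{itemize}
Every sub-piece is already bounded, so either choice yields \eqref{eq2.6}.

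As in the paper, converting $(t_2-t_1)^{q\nu-1}$ and $(t_2-t_1)^{q(1-\alpha)}$ into $(t_2-t_1)^{q\gamma}$ puts a power of $T$ into $C$. This is harmless since $T$ is fixed.
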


\begin{proof}

Choose $\nu$ satisfying
$$
\frac1q+\gamma<\nu<\frac12-\alpha.
$$
Such a choice is possible by the assumptions on $\alpha,\gamma,q$. 
\vs
Without loss of generality, assume $t_2\ge t_1\ge0$. Then
\begin{equation}\label{eq3.13}
\begin{split}
u_n(t_2)-u_n(t_1)
=&\,\big(S(t_2\wedge\tau_n-t_1\wedge\tau_n)-I\big)S(t_1\wedge\tau_n)u_0\\
&+\int_0^{t_1\wedge\tau_n}\big(S(t_2\wedge\tau_n-t_1\wedge\tau_n)-I\big)S(t_1\wedge\tau_n-s)f(u_n(s))\,ds\\
&+\int_{t_1\wedge\tau_n}^{t_2\wedge\tau_n} S(t_2\wedge\tau_n-s)f(u_n(s))\,ds\\
&+\int_0^{t_1\wedge\tau_n}\big(S(t_2\wedge\tau_n-t_1\wedge\tau_n)-I\big)S(t_1\wedge\tau_n-s)\sigma(u_n(s))\,dW(s)\\
&+\int_{t_1\wedge\tau_n}^{t_2\wedge\tau_n} S(t_2\wedge\tau_n-s)\sigma(u_n(s))\,dW(s)\\
=:&\sum_{i=1}^5 J_i.
\end{split}
\end{equation}

Using the smoothing estimate \eqref{eq2.11} and the modulus of continuity estimate \eqref{eq2.12}, we obtain
\begin{equation*}
\begin{split}
&\|A_q^\alpha J_1\|_{L^q}+\|A_q^\alpha J_2\|_{L^q}\\
\le&\,C_\gamma |t_2\wedge\tau_n-t_1\wedge\tau_n|^\gamma
\Big(\|A_q^{\alpha+\gamma}S(t_1\wedge\tau_n)u_0\|_{L^q}+\int_0^{t_1\wedge\tau_n}\|A_q^{\alpha+\gamma}S(t_1\wedge\tau_n-s)f(u_n(s))\|_{L^q}\,ds\Big)\\
\le&\,C_\gamma |t_2-t_1|^\gamma
\Big(\|A_q^{\alpha+\gamma}u_0\|_{L^q}+\int_0^{t_1\wedge\tau_n}\|A_q^{\alpha+\gamma}S(t_1\wedge\tau_n-s)\|_{\mathcal L(L^q)}\|f(u_n(s))\|_{L^q}\,ds\Big)\\
\le&\,C_{\gamma,\alpha}\sup_{|u|\le n}|f(u)|\,|\mathcal O|^{1/q}\,|t_2-t_1|^\gamma
\Big(\|A_q^{\alpha+\gamma}u_0\|_{L^q}+\int_0^{t_1\wedge\tau_n}(t_1\wedge\tau_n-s)^{-\alpha-\gamma}e^{-\lambda(t_1\wedge\tau_n-s)}\,ds\Big)\\
\le&\,C_{\gamma,\alpha,n,|\mathcal O|}\,|t_2-t_1|^\gamma
\big(\|A_q^{\alpha+\gamma}u_0\|_{L^q}+\lambda^{\alpha+\gamma-1}\Gamma(1-\alpha-\gamma)\big)\\
\le&\,C_{\gamma,\alpha,n,|\mathcal O|,\|A_q^{1/2}u_0\|_{L^q},\lambda}\,|t_2-t_1|^\gamma,
\end{split}
\end{equation*}
where, since $0<\alpha+\gamma<1/2$ and $u_0\in D(A_0^{1/2})\hookrightarrow D(A_q^{\alpha+\gamma})$, we have used the estimate
$
\|A_q^{\alpha+\gamma}u_0\|_{L^q}<\infty
$
in the last inequality.
\vs
For $J_3$, we have
\begin{equation*}
\begin{split}
\|A_q^\alpha J_3\|_{L^q} \le&\, C_{\alpha,n,|\mathcal O|}\int_{t_1\wedge\tau_n}^{t_2\wedge\tau_n}(t_2\wedge\tau_n-s)^{-\alpha}ds\\
\le&\,C_{\alpha,n,|\mathcal O|}|t_2\wedge\tau_n-t_1\wedge\tau_n|^{1-\alpha}\\
\le&\, C_{\alpha,n,|\mathcal O|}|t_2-t_1|^{1-\alpha}.
\end{split}
\end{equation*}

By the factorization formula \eqref{eq3.10}, we have
$$
J_4=\frac{\sin(\pi\nu)}{\pi}
\int_0^{t_1\wedge\tau_n}(t_1\wedge\tau_n-s)^{\nu-1}\big(S(h)-I\big)
S(t_1\wedge\tau_n-s)Z_\nu^n(s)\,ds
$$
and
$$
J_5=\frac{\sin(\pi\nu)}{\pi}
\int_{t_1\wedge\tau_n}^{t_2\wedge\tau_n}
(t_2\wedge\tau_n-s)^{\nu-1}S(t_2\wedge\tau_n-s)Z_\nu^n(s)\,ds,
$$
where $h:=t_2\wedge\tau_n-t_1\wedge\tau_n\ge0$.
\vs
For $J_4$, taking $A_q^\alpha$ and $L^q$-norms yields
$$
\|A_q^\alpha J_4\|_{L^q}
\le C_\nu
\int_0^{t_1\wedge\tau_n}(t_1\wedge\tau_n-s)^{\nu-1}
\|(S(h)-I)S(t_1\wedge\tau_n-s)A_q^\alpha Z_\nu^n(s)\|_{L^q}\,ds,
$$
where $\nu\in(1/q+\gamma,1/2-\alpha)$.
By the modulus of continuity estimate \eqref{eq2.12}, we have
$$
\|(S(h)-I)S(r)\|_{\mathcal L(L^q)}
\le C_\gamma h^\gamma r^{-\gamma} e^{-\lambda r},\qquad r>0.
$$
Applying this with $r=t_1\wedge\tau_n-s$, we get
$$
\|A_q^\alpha J_4\|_{L^q}
\le C_{\nu,\gamma} h^\gamma
\int_0^{t_1\wedge\tau_n}(t_1\wedge\tau_n-s)^{\nu-1-\gamma}
e^{-\lambda(t_1\wedge\tau_n-s)}
\|A_q^\alpha Z_\nu^n(s)\|_{L^q}\,ds.
$$
Then, by H\"older's inequality,
$$
\begin{aligned}
\|A_q^\alpha J_4\|_{L^q}
&\le C_{\nu,\gamma} h^\gamma
\Bigg(\int_0^{t_1\wedge\tau_n}
(t_1\wedge\tau_n-s)^{(\nu-1-\gamma)\frac{q}{q-1}}
e^{-\lambda(t_1\wedge\tau_n-s)\frac{q}{q-1}}\,ds\Bigg)^{\frac{q-1}{q}} \\
&\quad\times
\Bigg(\int_0^{t_1\wedge\tau_n}
\|A_q^\alpha Z_\nu^n(s)\|_{L^q}^q\,ds\Bigg)^{\frac{1}{q}}.
\end{aligned}
$$
The first integral is bounded by a constant $C_{\nu,\gamma,q,\lambda}$ because $\nu>1/q+\gamma$. Thus,
$$
\|A_q^\alpha J_4\|_{L^q}^q
\le C_{\nu,\gamma,q,\lambda} h^{q\gamma}
\int_0^{t_1\wedge\tau_n}
\|A_q^\alpha Z_\nu^n(s)\|_{L^q}^q\,ds.
$$
Taking expectations and applying Lemma \ref{lem:Znu-estimate}, we obtain
$$
\mathbb{E}\big(\|A_q^\alpha J_4\|_{L^q}^q\big)
\le C |t_2-t_1|^{q\gamma}\,t_1,
$$
where the constant $C>0$ depends on $\alpha,\nu,\gamma,q,|\mathcal O|,\Theta,n,\lambda$.
\vs
For $J_5$, taking $A_q^\alpha$ and $L^q$-norms yields
$$
\|A_q^\alpha J_5\|_{L^q}
\le C_\nu
\int_{t_1\wedge\tau_n}^{t_2\wedge\tau_n}
(t_2\wedge\tau_n-s)^{\nu-1}
\|S(t_2\wedge\tau_n-s)A_q^\alpha Z_\nu^n(s)\|_{L^q}\,ds.
$$
Using the smoothing estimate $\|S(r)\|_{\mathcal L(L^q)}\le C e^{-\lambda r}$ and dropping the exponential factor,
$$
\|A_q^\alpha J_5\|_{L^q}
\le C_{\nu}
\int_{t_1\wedge\tau_n}^{t_2\wedge\tau_n}
(t_2\wedge\tau_n-s)^{\nu-1}
\|A_q^\alpha Z_\nu^n(s)\|_{L^q}\,ds.
$$
Let $h:=t_2\wedge\tau_n-t_1\wedge\tau_n$. By H\"older's inequality,
$$
\begin{aligned}
\|A_q^\alpha J_5\|_{L^q}
&\le C_{\nu}
\Bigg(\int_{t_1\wedge\tau_n}^{t_2\wedge\tau_n}
(t_2\wedge\tau_n-s)^{(\nu-1)\frac{q}{q-1}}\,ds\Bigg)^{\frac{q-1}{q}} 
\Bigg(\int_{t_1\wedge\tau_n}^{t_2\wedge\tau_n}
\|A_q^\alpha Z_\nu^n(s)\|_{L^q}^q\,ds\Bigg)^{\frac{1}{q}}.
\end{aligned}
$$
Since $\nu>1/q$, the first integral is bounded by $C_{\nu,q} h^{\frac{\nu q-1}{q-1}}$. Hence,
$$
\|A_q^\alpha J_5\|_{L^q}^q
\le C_{\nu,q} h^{\nu q-1}
\int_{t_1\wedge\tau_n}^{t_2\wedge\tau_n}
\|A_q^\alpha Z_\nu^n(s)\|_{L^q}^q\,ds.
$$
Taking expectations and using $h\le |t_2-t_1|$,
$$
\mathbb{E}\big(\|A_q^\alpha J_5\|_{L^q}^q\big)
\le C_{\nu,q} |t_2-t_1|^{\nu q-1}
\mathbb{E}
\int_{t_1\wedge\tau_n}^{t_2\wedge\tau_n}
\|A_q^\alpha Z_\nu^n(s)\|_{L^q}^q\,ds.
$$
Since
$$
\mathbb{E}
\int_{t_1\wedge\tau_n}^{t_2\wedge\tau_n}
\|A_q^\alpha Z_\nu^n(s)\|_{L^q}^q\,ds
=
\int_{t_1}^{t_2}
\mathbb{E}\left[
\|A_q^\alpha Z_\nu^n(s)\|_{L^q}^q \mathbbm{1}_{\{s\le\tau_n\}}
\right] ds
$$
and $\mathbbm{1}_{\{s\le\tau_n\}}\le1$, Lemma \ref{lem:Znu-estimate} yields
$$
\mathbb{E}\int_{t_1\wedge\tau_n}^{t_2\wedge\tau_n}
\|A_q^\alpha Z_\nu^n(s)\|_{L^q}^q
 ds
\le
\int_{t_1}^{t_2} C\,ds
= C |t_2-t_1|.
$$
Therefore,
$$
\mathbb{E}\big(\|A_q^\alpha J_5\|_{L^q}^q\big)
\le C |t_2-t_1|^{\nu q},
$$
where $C>0$ depends on $\alpha,\nu,q,|\mathcal O|,\Theta,n,\lambda$.
\vs
Summarizing the estimates for $\mathbb{E}\|A_q^\alpha J_i\|_{L^q}^q$, $i=1,\cdots,5$, and noting that $\gamma < \min\{1-\alpha,\nu\}$ and $\nu$ can be chosen to depend only on $\gamma,\alpha$ and $q$, we obtain \eqref{eq2.6}.
\end{proof}
\vs

The following result is builded upon Lemma \ref{lem:Jn-estimate} and Proposition \ref{p1}.
\begin{corollary}\label{c2}
Suppose that $0<\a<1/2$, $0<\gam<1/2-\a$ and$$
q>\max\left\{\frac{2}{1-2(\alpha+\gamma)},\,\frac{1}{\gamma}\right\}.
$$
Then for any $T>0$ and $0<\eta<\gamma-1/q$, there exists  a random variable $K(\omega)>0$ such that for a.e. $\omega\in\Omega$,
$$
\|A_q^{\alpha}[u_n(t_1)-u_n(t_2)]\|_{L^q} \le K(\omega)|t_1-t_2|^\eta, \qquad 0\le t_1,t_2\le T,
$$
and there exists a constant $B>0$, depending on $\|A_q^{1/2}u_0\|_{L^q}$, $\alpha,\gamma,q,T,|\mathcal O|,\Theta,n,\lambda$, such that
$$
\mathbb{E}(K^q) \le B.
$$
\end{corollary}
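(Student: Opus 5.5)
The corollary is a direct application of the Kolmogorov continuity theorem in its quantitative (Garsia--Rodemich--Rumsey type) form, applied to the Banach-space-valued process $X(t):=A_q^\alpha u_n(t)$, $t\in[0,T]$, with values in $L^q(\mathcal O)$. I take Proposition \ref{p1} to be exactly such a statement. If $X$ satisfies
$$\mathbb E\|X(t_2)-X(t_1)\|^q\le M|t_2-t_1|^{1+\epsilon}$$
on $[0,T]$, then $X$ has a continuous version. Moreover, for every $0<\eta<\epsilon/q$, the $\eta$-Hölder seminorm $K(\omega)$ of that version satisfies $\mathbb E K^q\le C_{\eta,\epsilon,q,T}\,M$. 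The plan is therefore to verify this moment hypothesis with explicit $M$ and $\epsilon$, and then read off the admissible range of $\eta$.

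First, Lemma \ref{lem:Jn-estimate} applies. Its hypotheses $0<\alpha<1/2$, $0<\gamma<1/2-\alpha$ and $q>2/(1-2(\alpha+\gamma))$ are all contained in the assumptions of the corollary, so
$$\mathbb E\|A_q^\alpha[u_n(t_2)-u_n(t_1)]\|_{L^q}^q\le C(T+1)|t_2-t_1|^{q\gamma}.$$
The extra hypothesis $q>1/\gamma$ gives $q\gamma>1$, so the exponent can be written as $q\gamma=1+\epsilon$ with $\epsilon=q\gamma-1>0$. The Kolmogorov criterion then yields Hölder continuity for every $\eta<\epsilon/q=\gamma-1/q$, which is precisely the range in the statement. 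The constant is $M=C(T+1)$. Tracing the dependencies in the proof of Lemma \ref{lem:Jn-estimate}, through the $J_1,J_2$ bounds involving $\|A_q^{\alpha+\gamma}u_0\|_{L^q}\le C\|A_q^{1/2}u_0\|_{L^q}$ and through Lemma \ref{lem:Znu-estimate}, $M$ depends only on $\|A_q^{1/2}u_0\|_{L^q}$, $\alpha$, $\gamma$, $q$, $T$, $|\mathcal O|$, $\Theta$, $n$ and $\lambda$. This gives the claimed $B$.

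One technical point needs care. The process $u_n(t)=u(t\wedge\tau_n)$ is already continuous in $C_0(\overline{\mathcal O})$, but the corollary asserts a Hölder bound for $A_q^\alpha u_n$ itself, not merely for some version of it. I will handle this in the standard way. Kolmogorov gives a Hölder continuous version $\tilde X$ of $X=A_q^\alpha u_n$. On a full-measure set, $\tilde X$ and $X$ agree on all dyadic rationals of $[0,T]$. Applying $A_q^{-\alpha}$, which is bounded on $L^q$, the process $A_q^{-\alpha}\tilde X$ is continuous in $L^q$ and agrees with $u_n$ on these rationals. Since $u_n$ is continuous in $C_0\hookrightarrow L^q$, the two processes agree for all $t$. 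Hence $u_n(t)\in D(A_q^\alpha)$ and $A_q^\alpha u_n(t)=\tilde X(t)$ for every $t$, almost surely, and the Hölder bound transfers to $A_q^\alpha u_n$.

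I expect this identification step, together with making sure Proposition \ref{p1} delivers the moment bound $\mathbb E K^q\le B$ and not just a.s. finiteness of $K$, to be the main obstacle. If Proposition \ref{p1} only gives existence of $K$, I would fall back on the dyadic chaining argument directly. Set $K_m:=\max_k\|X((k+1)T2^{-m})-X(kT2^{-m})\|$. Then
$$\mathbb E K_m^q\le 2^m M (T2^{-m})^{1+\epsilon},$$
and the standard chaining gives $K\le C\sum_m 2^{m\eta}T^{-\eta}K_m$. Minkowski's inequality in $L^q(\Omega)$ then bounds $\|K\|_{L^q(\Omega)}$ by a geometric series $\sum_m 2^{m(\eta-\epsilon/q)}$, which converges precisely because $\eta<\gamma-1/q$.
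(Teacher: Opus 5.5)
Your proposal is correct and follows the paper's route exactly. The corollary is obtained by feeding the moment bound of Lemma \ref{lem:Jn-estimate} into the Kolmogorov estimate of Proposition \ref{p1}, with $\xi=q\gamma>1$ guaranteed by $q>1/\gamma$, which yields $\eta<(\xi-1)/q=\gamma-1/q$ and $\mathbb{E}(K^q)\le B$. Your additional identification of the continuous version of $A_q^\alpha u_n$ with $A_q^\alpha u_n$ itself, via the boundedness of $A_q^{-\alpha}$ and the continuity of $u_n$ in $L^q$, is a detail the paper leaves implicit, and you handle it correctly.
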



\begin{lemma}\label{le3.10}
For any $d\in\mathbb N^+$, there exist real numbers $\alpha,\gamma$ satisfying
\be\label{eq3.2}
\begin{aligned}
& 0 < \alpha < \frac{1}{2},\quad 0 < \gamma < \min\left\{\a,\frac{1}{4} - \frac{\alpha}{2}\right\}, \\
& q>\max\left\{\frac{2}{1-2(\alpha+2\gamma)},\,\frac{1}{2\a},\,\frac{1}{\gamma}\right\},
\end{aligned}
\ee
if and only if $$q>8.$$
\end{lemma}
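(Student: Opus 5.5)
The lemma is elementary. The plan is to rewrite every constraint in \eqref{eq3.2} as a linear inequality in $(\alpha,\gamma)$ with coefficients depending on $1/q$, and then check that the resulting open region is nonempty exactly when $q>8$. The dimension $d$ plays no role: none of the inequalities involves $d$.

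\emph{Reformulation.} Since $q>0$, the condition $q>\frac{2}{1-2(\alpha+2\gamma)}$ (with a positive denominator, as required for it to be meaningful) is equivalent to
\[
2\alpha+4\gamma<1-\frac{2}{q}.
\]
Likewise $q>1/\gamma$ is equivalent to $\gamma>1/q$, and $q>1/(2\alpha)$ is equivalent to $\alpha>1/(2q)$. The condition $\gamma<\frac14-\frac{\alpha}{2}$ is equivalent to $2\alpha+4\gamma<1$. This last condition is implied by the previous display, so it can be dropped.

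\emph{Necessity.} Suppose $\alpha,\gamma$ satisfy \eqref{eq3.2}. Then $\gamma>1/q$, and from $\gamma<\alpha$ also $\alpha>1/q$. Hence
\[
2\alpha+4\gamma>\frac{6}{q}.
\]
Combining this with $2\alpha+4\gamma<1-2/q$ gives $6/q<1-2/q$, that is, $q>8$. The key point is that the chain $\alpha>\gamma>1/q$ forces the lower bound $6/q$, and the strictness of both inequalities keeps the conclusion strict.

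\emph{Sufficiency.} Let $q>8$. For small $\varepsilon>0$, set
\[
\gamma=\frac1q+\varepsilon,\qquad \alpha=\frac1q+2\varepsilon .
\]
Then $\gamma<\alpha$, $\gamma>1/q$, and $\alpha>1/q>1/(2q)$. Moreover $2\alpha+4\gamma=6/q+8\varepsilon$, which is less than $1-2/q$ as soon as $8\varepsilon<1-8/q$; this is possible precisely because $q>8$. Since $1/q<1/8$, we also get $\alpha<1/2$ for small $\varepsilon$. Finally $2\alpha+4\gamma<1-2/q<1$, which gives $\gamma<\frac14-\frac\alpha2$, so all the conditions in \eqref{eq3.2} hold.

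There is no real obstacle. The only care needed is in the reformulation: the denominator $1-2(\alpha+2\gamma)$ must be positive, and the strict versus non-strict inequalities must be tracked so that the threshold comes out as exactly $q>8$.
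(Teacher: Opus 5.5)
Your proof is correct, and on the necessity side it is more reliable than the paper's. The paper bounds the maximum below by $\max\{\frac{2}{1-6\alpha},\frac1\alpha\}$ and minimizes over $\alpha$, getting the value $8$ at $\alpha=\frac18$. However, the step it uses, $\alpha+2\gamma<3\alpha\Rightarrow\frac{2}{1-2(\alpha+2\gamma)}>\frac{2}{1-6\alpha}$, goes the wrong way: for $6\alpha<1$, a smaller $\alpha+2\gamma$ makes that fraction smaller. Your chain $\alpha>\gamma>\frac1q$ uses the lower bound $\alpha+2\gamma>3\gamma$ instead, which is the correct direction. In the paper's language it gives $\max\{\cdots\}\ge\max\{\frac{2}{1-6\gamma},\frac1\gamma\}\ge 8$ (note $6\gamma<2\alpha+4\gamma<1$), with the minimum at $\gamma=\frac18$. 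Your linearization also makes the structure transparent: the admissible set is an open polygon in the $(\alpha,\gamma)$-plane with corner $(\frac1q,\frac1q)$, and it is nonempty exactly when $\frac6q<1-\frac2q$. For sufficiency the paper takes $\alpha=\frac18$, $\gamma=\frac18-\varepsilon$, near the point where $\frac{2}{1-2(\alpha+2\gamma)}$ and $\frac1\gamma$ both equal $8$, whereas you sit just above the corner $(\frac1q,\frac1q)$. Both choices work, and yours shows the role of $q>8$ in the single condition $8\varepsilon<1-\frac8q$. One minor wording point: positivity of $1-2(\alpha+2\gamma)$ is not needed ``for meaningfulness''; it is supplied by the explicit constraint $\gamma<\frac14-\frac\alpha2$. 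Without that constraint, $q>\frac{2}{1-2(\alpha+2\gamma)}$ would hold trivially whenever $2\alpha+4\gamma>1$. So the constraint may be dropped only after it has justified the reformulation, and both of your directions do respect this.
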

We have provided a proof of Lemma \ref{le3.10} in Appendix \ref{A.4}.
\Vs
The following lemma plays a key role in the proof of Theorem \ref{th2}.
\begin{lemma}\label{le2.6} 
Let
$$
I_{21}(t):=\int_0^{t\wedge\tau_n}S(t\wedge\tau_n-s)[\sigma(u_n(s))-\sigma(0)]\,dW(s),\qquad t\ge0.
$$
Assume that $q>8$.
Then for any $T>0$, there exists a constant $C>0$, depending on $\|A_q^{1/2}u_0\|_{L^q}$, $q$, $d$, $n$, $|\mathcal O|$, $\Theta$, $\lambda$, and $T$, such that
$$
\mathbb{E}\|A_q^{1/2}I_{21}(t)\|_{L^q}^q \le C,\qquad 0\le t\le T.
$$
\end{lemma}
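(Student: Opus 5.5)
We estimate $A_q^{1/2}I_{21}(t)$ directly, without factorization, using the pointwise BDG argument of Lemma \ref{lem:Znu-estimate} with $\alpha=1/2$. The non-integrable kernel $(t-s)^{-1}$ is compensated by Hölder continuity in time of $\sigma(u_n(s))-\sigma(0)$. Write $t_n:=t\wedge\tau_n$ and $g(s):=[\sigma(u_n(s))-\sigma(0)]\mathbbm{1}_{\{s\le\tau_n\}}$. Since the integrand is progressively measurable and $t\wedge\tau_n\le t$, we may treat $I_{21}(t)$ as $\int_0^t S(t-s)g(s)\,dW(s)$ evaluated along the stopping. More precisely, we first bound $\int_0^t \mathbbm{1}_{\{s\le\tau_n\}}S(t-s)\cdots\,dW$ and handle the random endpoint using the stopped-process identity, as in the preceding lemmas.

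\textbf{Splitting the integrand.} Following the paper's hint, decompose
$$
g(s)e_j=\big[g(s)-g(t)\big]e_j+g(t)e_j ,
$$
or, to keep adaptedness, freeze at a suitable point rather than at $t$. The adapted version freezes via $S(t-s)$ acting on the increment $u_n(s)-u_n(t)$ and uses the regularized form $A_q^{1/2}S(t-s)=A_q^{1/2-\alpha}S(t-s)A_q^{\alpha}$. For the increment part, BDG plus Minkowski as in Lemma \ref{lem:Znu-estimate} give
$$
\E\|A_q^{1/2}\cdot\|_{L^q}^q\le C\,\E\Big(\int_0^t (t-s)^{-1}\sum_j\mu_j\|[\sigma(u_n(s))-\sigma(u_n(t))]e_j\|_{L^q}^2\,ds\Big)^{q/2}.
$$
Since $\sigma\in C^1$ and $|u_n|\le n$, we have $|\sigma(u_n(s))-\sigma(u_n(t))|\le C_n|u_n(s)-u_n(t)|$. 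The $C_b$ norm of the increment is controlled by $\|A_q^\alpha(u_n(s)-u_n(t))\|_{L^q}$ via the Sobolev embedding $D(A_q^\alpha)\hookrightarrow C_b$ for $2\alpha>d/q$, i.e. $q>d/(2\alpha)$. Corollary \ref{c2} (Kolmogorov-type Hölder bound with random constant $K$) then gives $\le C_nK(\omega)|t-s|^{\eta}$. The integral becomes $\int_0^t(t-s)^{2\eta-1}ds<\infty$, and $\E K^q\le B$ closes this part. The constraints of Lemma \ref{le3.10} (with $\eta$ close to $\gamma$, and $2\gamma$ appearing because the Hölder exponent enters squared in the kernel) are exactly what make the parameter choices feasible when $q>8$. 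The dimension restriction is absorbed by taking $q$ large.

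\textbf{The frozen part.} For $g(t)$ the kernel singularity is handled by the semigroup identity
$$
\int_0^t A_q S(2(t-s))\,ds=\tfrac12\big(I-S(2t)\big),
$$
i.e. the square-function bound $\int_0^t\|A_q^{1/2}S(r)v\|^2dr\lesssim\|v\|^2$ in $L^2$-type norms. In the $L^q$ setting we use the pointwise quadratic estimate $\int_0^\infty |A^{1/2}S(r)v|^2dr$ bounded in $L^{q/2}$ by $\|v\|_{L^q}^2$, which is Littlewood–Paley–Stein theory for the Dirichlet heat semigroup. Here $v=g(t)e_j$ is bounded by $2\sup_{|u|\le n}|\sigma|$ times $\|e_j\|_{C_b}$, and summing with $\mu_j$ gives $\Theta$.

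\textbf{Main obstacle.} The main difficulty is adaptedness in the frozen part: $g(t)$ is not $\mathcal F_s$-measurable, so the BDG step is illegitimate as written. To fix this, we would instead decompose along a dyadic/telescoping scheme, or replace $g(t)$ by $S$-smoothed versions. Alternatively, we would put the Hölder regularity on the operator side: estimate $\sum_j\mu_j\|A_q^{1/2}S(t-s)[g(s)e_j]\|_{L^q}^2$ by interpolating $g(s)e_j\in D(A_q^{\epsilon})$. Here $g(s)$ has spatial regularity from $u_n(s)\in D(A_q^\alpha)$, so $\sigma(u_n(s))-\sigma(0)$ lies in $W^{2\alpha,q}$ with zero trace, using $\sigma(0)$ subtracted to preserve Dirichlet conditions. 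This gives $(t-s)^{-1+2\epsilon}$ provided $e_j$ multiplies into $D(A_q^\epsilon)$ with norms summable against $\mu_j\|e_j\|_{C_b}^2$. I expect this multiplier/summability step, with Lemma \ref{lem:stochastic-estimate} controlling $\E\|A_q^\alpha u_n\|^q$, to be the hardest point, and would try it first.
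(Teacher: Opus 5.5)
Your main line does not work, and the reason goes beyond the one you flag.

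\textbf{The increment part is also anticipating.} In the split $g(s)=[g(s)-g(t)]+g(t)$, the increment $g(s)-g(t)$ depends on $u_n(t)$. That quantity is $\mathcal F_t$- but not $\mathcal F_s$-measurable, so $\int_0^t S(t-s)[g(s)-g(t)]\,dW(s)$ is not an It\^o integral either. The BDG/Minkowski bound of Lemma~\ref{lem:Znu-estimate} therefore cannot be applied to it; the splitting would only make sense in a Skorokhod sense, with extra Malliavin-derivative terms. Freezing at the terminal time is the deterministic maximal-regularity trick and does not transfer to stochastic convolutions. So neither your temporal H\"older estimate via Corollary~\ref{c2} nor the Littlewood--Paley--Stein bound gets off the ground.

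\textbf{The random endpoint is not handled.} If you work ``without factorization'', a fixed-time BDG bound for $Y(t)=\int_0^tS(t-s)g(s)\,dW(s)$ does not control $\mathbb{E}\|A_q^{1/2}Y(t\wedge\tau_n)\|_{L^q}^q$. Note that $S(t\wedge\tau_n-s)\ne S(t-s)$, so $I_{21}(t)$ is $Y$ evaluated at a stopping time, not $Y(t)$. One needs a maximal-type bound. The preceding lemmas get it precisely through the factorization formula \eqref{eq3.10}, which reduces the estimate to $\int_0^T\mathbb{E}\|A_q^{1/2}Z(r)\|_{L^q}^q\,dr$ for an adapted process $Z$, at the price of requiring $\nu>1/q$.

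\textbf{Your fallback is the paper's mechanism, but it is not carried out.} The $(t-s)^{-1}$ singularity is beaten by \emph{spatial} regularity, not temporal H\"older continuity. Here is how the paper does it:
\begin{itemize}
\item Set $\Psi=\sigma-\sigma(0)$, so $\Psi(0)=0$. Proposition~\ref{prop:sigma-W2aq} then gives $\|A_q^{\alpha'}\Psi(u_n(s))\|_{L^q}\le C_n\|A_q^{\alpha}u_n(s)\|_{L^q}$ for $\alpha'<\alpha$.
\item Inside $Z_\nu^{n1}(r)=\int_0^r(r-s)^{-\nu}S(r-s)\Psi(u_n(s))\mathbbm{1}_{\{s\le\tau_n\}}\,dW(s)$, write $A_q^{1/2}S(r-s)=A_q^{1/2-\alpha'}S(r-s)A_q^{\alpha'}$. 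This yields the kernel $(r-s)^{-1+2(\alpha'-\nu)}$, which is integrable for $\nu<\alpha'$.
\item The factorization step with $\nu>1/q$ then handles $t\wedge\tau_n$.
\end{itemize}
Corollary~\ref{c2} enters the paper only to bound $\|A_q^\alpha u_n(s)\|_{L^q}$ pathwise. Your plan to use Lemma~\ref{lem:stochastic-estimate} instead, via Jensen's inequality against the integrable kernel, would also work and is leaner. But as written, this route is only announced and its key step is left open, so the proposal is not a proof.

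\textbf{On the multiplier step you single out.} The paper does not resolve it either. Its BDG bound for $Z_\nu^{n1}$ has a constant depending only on $\Theta$. That amounts to using $\|A_q^{\alpha'}(\Psi(u_n(s))e_j)\|_{L^q}\lesssim\|e_j\|_{C_b}\|A_q^{\alpha'}\Psi(u_n(s))\|_{L^q}$ without justification. A product estimate in $W^{2\alpha',q}$ also produces a term involving the fractional seminorm of $e_j$, which grows with $j$. Your suspicion that control beyond $\Theta$ is needed there is well founded.
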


\begin{proof} 
For $q>8$, let $\alpha,\gamma$ be the parameters satisfying \eqref{eq3.2}. Under the above assumptions, one can readily choose a real number $\nu$ satisfying
$$\frac{1}{q}+\gam<\nu<\frac{1}{2}-\a,\quad 0<\a-\nu<\frac{1}{2}.$$
\vs
By the factorization formula, 
$$
I_{21}(t)=\frac{\sin(\pi\nu)}{\pi}\int_0^{t\wedge\tau_n}(t\wedge\tau_n-r)^{\nu-1}S(t\wedge\tau_n-r)Z_\nu^{n1}(r)\,dr,
$$
where
$$
Z_\nu^{n1}(r):=\int_0^r(r-s)^{-\nu}S(r-s)[\sigma(u_n(s))-\sigma(u_n(0))]\mathbbm{1}_{\{s\le\tau_n\}}\,dW(s).
$$

Denote $\Psi(t):=\sigma(t)-\sigma(0)$. For $0<\a'<\a<1/(2q)$, we can apply Proposition \ref{prop:sigma-W2aq} and Corollary \ref{c2} to obtain, for a.e. $\w\in\W$,
$$\begin{aligned}
\|A_q^{\a'}\Psi(u_n(s))\|_{L^q}\le& CL_n\|A_q^\a u_n(s)\|_{L^q}\\
\le&CL_n\(\|A_q^\a(u_n(s)-u_n(0))\|_{L^q}+\|A_q^{\a}u_n(0)\|_{L^q}\)\\
\le& CL_n\big(K(\omega)s^\eta+\|A_q^\a u_0\|_{L^q}\big)=:\~K(\w),\end{aligned}
$$ where $L_n:=\sup_{|t|\le n}|\sig'(t)|$.
It is clear that $\E(\~K^q)<\8$.
By the BDG inequality, we have
$$
\begin{aligned}
&\mathbb{E}\big(\|A_q^{1/2}Z_\nu^{n1}(r)\|_{L^q}^q\big)\\
\le& C_{q,|\mathcal O|,\Theta}\,
\mathbb{E}\Bigg[\Bigg(
\int_0^r(r-s)^{-2\nu}(r-s)^{-(1-2\alpha')}e^{-2\lambda(r-s)}
\|A_q^{\a'}\Psi(u_n(s))\|_{L^q}^2\,ds
\Bigg)^{q/2}\Bigg] \\
\le& C_{q,|\mathcal O|,\Theta,n}\,\mathbb{E}(\tilde K^q)\,
\left[
\int_0^r(r-s)^{-1+2(\alpha'-\nu)}e^{-2\lambda(r-s)}s^{2\eta}\,ds
\right]^{q/2}\\
\le& C_{q,|\mathcal O|,\Theta,n}\,\mathbb{E}(\tilde K^q)\,t^{q\eta}
\left[
\int_0^r(r-s)^{-1+2(\alpha'-\nu)}e^{-2\lambda(r-s)}\,ds
\right]^{q/2}.
\end{aligned}
$$

Since $0<\alpha-\nu<1/2$, we may choose $\alpha'$ sufficiently close to $\alpha$ so that $0<\alpha'-\nu<1/2$ still holds. Then the integral
$
\int_0^r(r-s)^{-1+2(\alpha'-\nu)}\,e^{-2\lambda(r-s)}ds
$
converges uniformly for $r\le T$. Hence
$$
\mathbb{E}\big(\|A_q^{1/2}Z_\nu^{n1}(r)\|_{L^q}^q\big)
\le C_{q,|\mathcal O|,\Theta,n,\nu,\alpha,\eta,\lambda,T}.
$$

Taking $A_q^{1/2}$ and $L^q$-norms in the factorization formula, and using $\|S(\rho)\|_{\mathcal L(L^q)}\le C e^{-\lambda\rho}$, we get
$$
\|A_q^{1/2}I_{21}(t)\|_{L^q}
\le C_\nu
\int_0^{t\wedge\tau_n}(t\wedge\tau_n-r)^{\nu-1}
e^{-\lambda(t\wedge\tau_n-r)}
\|A_q^{1/2}Z_\nu^{n1}(r)\|_{L^q}\,dr.
$$
By H\"older's inequality,
$$\begin{aligned}\|A_q^{1/2}I_{21}(t)\|_{L^q} \le& C_\nu \int_0^{t \wedge \tau_n} (t \wedge \tau_n - r)^{\nu-1} e^{-\lambda(t \wedge \tau_n-r)} \|A_q^{1/2}Z_{\nu}^{n1}(r)\|_{L^q} \, dr\\
\le& C_\nu
\left( \int_0^{t \wedge \tau_n} (t \wedge \tau_n - r)^{(\nu-1)\frac{q}{q-1}} e^{-\lambda(t \wedge \tau_n-r)\frac{q}{q-1}} \, dr \right)^{\frac{q-1}{q}} \\
&\quad\times
\left( \int_0^{t \wedge \tau_n} \|A_q^{1/2}Z_{\nu}^{n1}(r)\|_{L^q}^q \, dr \right)^{\frac{1}{q}}.
\end{aligned}$$
Since $-1<(\nu-1)\frac{q}{q-1}<0$ (which follows from $\nu\in(1/q+\gamma,1/2-\alpha)$), we have $$\int_0^{t \wedge \tau_n} (t \wedge \tau_n - r)^{(\nu-1)\frac{q}{q-1}} e^{-\lambda(t \wedge \tau_n-r)\frac{q}{q-1}} \, dr\le \int_0^\infty r^{(\nu-1)\frac{q}{q-1}}e^{-2\lambda r\frac{q}{q-1}}\,dr < \infty.$$
It follows that $$
\|A_q^{1/2}I_{21}(t)\|_{L^q}^q
\le C_{\nu,q,\lambda}
\int_0^{t\wedge\tau_n}\|A_q^{1/2}Z_\nu^{n1}(r)\|_{L^q}^q\,dr.
$$
Taking expectations and using the estimate for $Z_\nu^{n1}$, we obtain
$$
\mathbb{E}\big(\|A_q^{1/2}I_{21}(t)\|_{L^q}^q\big)
\le C_{\nu,q,\lambda}\int_0^t C\,dr \le C.
$$
This completes the proof.
\end{proof}

\begin{lemma}\label{le2.8}
Suppose that $\sigma(0)\neq0$. Let
$$
I_{22}(t):=\int_0^{t\wedge\tau_n}S(t\wedge\tau_n-s)\sigma(0)\,dW(s),\qquad t\ge0.
$$
Assume that $q>2/\delta$. Then
$$
\mathbb{E}\big(\|A_q^{1/2}I_{22}(t)\|_{L^q}^q\big) \le C,\qquad t\ge0,
$$
where $C>0$ depends on $q$, $|\mathcal O|$, $\Theta'$, $\delta$ and $\lambda$.
\end{lemma}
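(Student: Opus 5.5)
The idea is to exploit that the integrand $\sigma(0)$ is deterministic and constant. Then $I_{22}$ is the stopped version of the Ornstein--Uhlenbeck-type convolution $Y(t):=\sigma(0)\int_0^tS(t-s)\,dW(s)$, namely $I_{22}(t)=Y(t\wedge\tau_n)$. Its critical regularity can be computed exactly on the eigenbasis, since $A_q^{1/2}S(\rho)e_j=\lambda_j^{1/2}e^{-\lambda_j\rho}e_j$. The extra factor $\lambda_j^\delta$ in $\Theta'$ will be used only to absorb the factorization weight $\rho^{-2\nu}$. This weight is what lets us handle the random endpoint $t\wedge\tau_n$.

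\emph{Step 1 (factorization with a stopped integrand).} Fix $\nu$ with $1/q<\nu\le\delta/2$; this is possible precisely because $q>2/\delta$, and then also $\nu<1/2$. As in \eqref{eq3.10}, write
$$I_{22}(t)=\frac{\sin(\pi\nu)}{\pi}\int_0^{t\wedge\tau_n}(t\wedge\tau_n-r)^{\nu-1}S(t\wedge\tau_n-r)Z(r)\,dr,$$
where $Z(r):=\sigma(0)\int_0^r(r-s)^{-\nu}S(r-s)\mathbbm{1}_{\{s\le\tau_n\}}\,dW(s)$.

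\emph{Step 2 (uniform moment bound for $A_q^{1/2}Z$).} Pointwise in $x$, write $A_q^{1/2}Z(r)(x)$ as the series $\sum_j\sqrt{\mu_j}\sigma(0)\int_0^r(r-s)^{-\nu}\lambda_j^{1/2}e^{-\lambda_j(r-s)}\mathbbm{1}_{\{s\le\tau_n\}}e_j(x)\,dB_j(s)$. Apply BDG, drop the indicator, and integrate in $x$ as in the proof of Lemma \ref{lem:Znu-estimate}. Since
$$\int_0^\infty\rho^{-2\nu}\lambda_je^{-2\lambda_j\rho}\,d\rho=\Gamma(1-2\nu)2^{2\nu-1}\lambda_j^{2\nu},$$
the quadratic variation is bounded by $C_\nu|\sigma(0)|^2\sum_j\mu_j\lambda_j^{2\nu}\|e_j\|_{C_b}^2$. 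This is at most $C_\nu|\sigma(0)|^2\lambda_1^{2\nu-\delta}\Theta'$, because $2\nu\le\delta$ and $\lambda_j\ge\lambda_1$. Hence $\sup_{r\ge0}\mathbb{E}\|A_q^{1/2}Z(r)\|_{L^q}^q\le C(q,|\mathcal O|,\Theta',\delta)$, uniformly in $r$ and $n$.

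\emph{Step 3 (removing the random endpoint with exponential weights).} Take $A_q^{1/2}$ inside and use $\|S(\rho)\|_{\mathcal L(L^q)}\le Ce^{-\lambda\rho}$. Split the kernel as $\rho^{\nu-1}e^{-\lambda\rho/2}\cdot e^{-\lambda\rho/2}$ and apply H\"older with exponents $q/(q-1)$ and $q$. The first factor integrates to a finite constant because $\nu>1/q$. This gives
$$\|A_q^{1/2}I_{22}(t)\|_{L^q}^q\le C\int_0^{t\wedge\tau_n}e^{-\frac{q\lambda}{2}(t\wedge\tau_n-r)}\|A_q^{1/2}Z(r)\|_{L^q}^q\,dr.$$
Exponential weights are used here instead of the plain H\"older step of Lemma \ref{le2.6}: a plain H\"older step would produce the factor $t$, whereas the weight keeps the effective window of length $O(1)$ regardless of $t$.

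\emph{Step 4 (main obstacle: taking expectations through the random time).} If $\tau_n$ were replaced by a deterministic time, Fubini and Step 2 would immediately give the bound $C\int_0^\infty e^{-q\lambda\rho/2}\,d\rho\cdot\sup_r\mathbb{E}\|A_q^{1/2}Z(r)\|^q_{L^q}$, uniform in $t$. The difficulty is that the weight is centred at $t\wedge\tau_n$, not at $t$.

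I would first try to circumvent this using the stopped integrand in $Z$. On $\{\tau_n\ge t\}$ the endpoint equals $t$ and there is nothing to do. On $\{\tau_n<t\}$, I would use the identity $I_{22}(t)=Y(\tau_n)$ and compare $Y(\tau_n)$ with the unstopped $Y$ on the integer window $[\lfloor\tau_n\rfloor,\lfloor\tau_n\rfloor+1]$. Bounding $e^{-c(\tau_n-r)}\le e^{c}e^{-c(k+1-r)}$ for $r\le\tau_n\le k+1$ reduces the estimate to $\mathbb{E}\sup_{s\in[k,k+1]}\|A_q^{1/2}Y(s)\|_{L^q}^q$. For this supremum, the factorization plus Step 2 gives a bound independent of $k$, by stationarity of the increments over unit windows and the exponential decay of $S$. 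The bound is then summed against $\mathbb{P}(\lfloor\tau_n\rfloor=k)$, which is a probability distribution, so the sum stays bounded.

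If this window argument does not close, the fallback is to verify directly that $s\mapsto\int_0^se^{-c(s-r)}\|A_q^{1/2}Z(r)\|_{L^q}^q\,dr$ has supremum over unit windows with moments bounded uniformly. This follows from Step 2 together with a Kolmogorov-type bound, in the spirit of Corollary \ref{c2}.
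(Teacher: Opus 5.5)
Your Steps 1--3 reproduce the paper's argument: factorization with an exponent in $(1/q,\delta/2)$, BDG with the extra $\lambda_j^{\delta}$ absorbed into $\Theta'$, then H\"older in the outer time integral. You evaluate the kernel exactly on the eigenbasis, whereas the paper uses $\|A_b^{(1-\delta)/2}S(r)\|_{\mathcal L(C_b)}\le Cr^{-(1-\delta)/2}$. Both work, and yours even allows $\nu=\delta/2$.

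The gap is in Step~4, at ``summed against $\mathbb{P}(\lfloor\tau_n\rfloor=k)$, which is a probability distribution''. Set $X_k:=\sup_{s\in[k,k+1]}\|A_q^{1/2}Y(s)\|_{L^q}^q$. What you actually have is
\begin{equation*}
\mathbb{E}\big[X_{\lfloor\tau_n\rfloor}\mathbbm{1}_{\{\tau_n<t\}}\big]=\sum_{k<t}\mathbb{E}\big[X_k\mathbbm{1}_{\{\lfloor\tau_n\rfloor=k\}}\big].
\end{equation*}
Replacing the right-hand side by $\sum_{k<t}\mathbb{P}(\lfloor\tau_n\rfloor=k)\,\mathbb{E}X_k$ requires $\tau_n$ to be independent of $Y$, and it is not. $\tau_n$ is the first time $\|u\|_{C_b}$ reaches $n$, and $u$ is driven by the same Brownian motions $B_j$ as $Y$. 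So $\{\lfloor\tau_n\rfloor=k\}$ and $X_k$ are dependent: reaching level $n$ on $[k,k+1]$ is typically caused by a noise excursion there.

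Without independence, the only general bounds are $\mathbb{E}\max_{k<t}X_k$ or $\sum_{k<t}\mathbb{E}X_k\le C(1+t)$, and both grow in $t$. Your fallback has the same defect: uniform moment bounds on each \emph{fixed} unit window do not control the window selected by a noise-dependent random time. So the uniformity in $t$ that Step~4 is meant to supply is not established.

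The paper never faces this obstacle because it does not achieve uniformity either. After H\"older it drops the weight and uses nonnegativity to enlarge the domain, $\int_0^{t\wedge\tau_n}\|A_q^{1/2}Z(r)\|_{L^q}^q\,dr\le\int_0^t\|A_q^{1/2}Z(r)\|_{L^q}^q\,dr$. It then applies Fubini and the uniform bound from your Step~2. That gives $Ct$, so the paper's closing ``$\int_0^tC\,dr\le C$'' holds only for $t$ in a bounded interval. Your exponential weight cannot survive this enlargement, since $e^{-c(t\wedge\tau_n-r)}\ge e^{-c(t-r)}$ points the wrong way.

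A constant depending on $T$ is all that Theorem~\ref{th2}, stated on $[0,T]$, needs. The repair is to replace Step~4 by that one-line enlargement and conclude $\mathbb{E}\|A_q^{1/2}I_{22}(t)\|_{L^q}^q\le C(1+t)$. A bound uniform in $t\ge0$ would require genuinely new information about $\tau_n$, which neither your argument nor the paper's provides.
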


\begin{proof}
For simplicity, we assume $\sigma(0)=1$. By the factorization formula (cf. \eqref{eq3.10}), we have
$$
I_{22}(t)=\frac{\sin(\pi\rho)}{\pi}\int_0^{t\wedge\tau_n}(t\wedge\tau_n-r)^{\rho-1}S(t\wedge\tau_n-r)Z_\rho^{n2}(r)\,dr,
$$
where
$$
Z_\rho^{n2}(r):=\int_0^r(r-s)^{-\rho}S(r-s)\mathbbm{1}_{\{s\le\tau_n\}}\,dW(s).
$$

We choose $\rho$ such that
$$
\frac1q<\rho<\frac{\delta}{2}.
$$
Such a choice is possible since $q>2/\delta$.

By the BDG inequality and the smoothing estimate
$\|A_b^{(1-\delta)/2}S(r)\|_{\mathcal L(C_b)}\le C r^{-(1-\delta)/2}$,
\begin{align*}
A_q^{1/2}Z_\rho^{n2}(r) 
&= \sum_{j=1}^\infty\sqrt{\mu_j}\int_0^r(r-s)^{-\rho} A_q^{1/2}S(r-s)e_j \,dB_j(s)\\
&=\sum_{j=1}^\infty\sqrt{\mu_j}\lambda_j^{\delta/2}\int_0^r(r-s)^{-\rho} A_b^{(1-\delta)/2}S(r-s)e_j \,dB_j(s),
\end{align*}
where the fact $A_b^{\delta/2}e_j=\lambda_j^{\delta/2}e_j$, $j\in\mathbb Z^+$ has been used.

Applying the Burkholder--Davis--Gundy inequality and using $\Theta':=\sum_{j=1}^\infty\lambda_j^\delta\mu_j\|e_j\|_{C_b}^2<\infty$, we obtain
\begin{align*}
\mathbb{E}\big(\|A_q^{1/2}Z_\rho^{n2}(r)\|_{L^q}^q\big)
&\le C_{q,|\mathcal O|}\,\mathbb{E}\Bigg[\Bigg(
\sum_{j=1}^\infty\lambda_j^\delta\mu_j\int_0^r(r-s)^{-2\rho}
\|A_b^{(1-\delta)/2}S(r-s)e_j\|_{C_b}^2\,ds
\Bigg)^{q/2}\Bigg] \\
&\le C_{q,|\mathcal O|}\,
\Bigg(\sum_{j=1}^{\infty}\lambda_j^\delta\mu_j\|e_j\|_{C_b}^2\Bigg)^{q/2}
\Bigg(\int_0^r(r-s)^{\delta-2\rho-1}e^{-\lambda(r-s)}\,ds\Bigg)^{q/2}\\
&\le C_{q,|\mathcal O|,\Theta'}\,
\Bigg(\int_0^r(r-s)^{\delta-2\rho-1}e^{-\lambda(r-s)}\,ds\Bigg)^{q/2}.
\end{align*}
Since $\rho<\delta/2$, the integral
$$
\int_0^r(r-s)^{\delta-2\rho-1}e^{-\lambda(r-s)}\,ds
\le \int_0^\infty r^{\delta-2\rho-1}e^{-\lambda r}\,dr
= \lambda^{-(\delta-2\rho)}\Gamma(\delta-2\rho)<\infty.
$$Hence
$$
\mathbb{E}\big(\|A_q^{1/2}Z_\rho^{n2}(r)\|_{L^q}^q\big)
\le C_{q,|\mathcal O|,\Theta',\rho,\delta,\lambda}.
$$

Taking $A_q^{1/2}$ and $L^q$-norms in the factorization formula, and using $\|S(\rho)\|_{\mathcal L(L^q)}\le C e^{-\lambda\rho}$, we get
$$
\|A_q^{1/2}I_{22}(t)\|_{L^q}
\le C_\rho
\int_0^{t\wedge\tau_n}(t\wedge\tau_n-r)^{\rho-1}
e^{-\lambda(t\wedge\tau_n-r)}
\|A_q^{1/2}Z_\rho^{n2}(r)\|_{L^q}\,dr.
$$
By H\"older's inequality,
\begin{align*}
\|A_q^{1/2}I_{22}(t)\|_{L^q}
&\le C_\rho
\left( \int_0^{t\wedge\tau_n} (t\wedge\tau_n-r)^{(\rho-1)\frac{q}{q-1}} e^{-\lambda(t\wedge\tau_n-r)\frac{q}{q-1}} \, dr \right)^{\frac{q-1}{q}} \\
&\quad\times
\left( \int_0^{t\wedge\tau_n} \|A_q^{1/2}Z_\rho^{n2}(r)\|_{L^q}^q \, dr \right)^{\frac{1}{q}}.
\end{align*}
Since $\rho>1/q$, the first integral is bounded by a constant $C_{\rho,q,\lambda}$. Thus
$$
\|A_q^{1/2}I_{22}(t)\|_{L^q}^q
\le C_{\rho,q,\lambda}
\int_0^{t\wedge\tau_n}\|A_q^{1/2}Z_\rho^{n2}(r)\|_{L^q}^q\,dr.
$$
Taking expectations and using (3.18), we obtain
$$
\mathbb{E}\big(\|A_q^{1/2}I_{22}(t)\|_{L^q}^q\big)
\le C_{\rho,q,\lambda}\int_0^t C_{q,|\mathcal O|,\Theta',\rho,\delta,\lambda}\,dr
\le C_{q,|\mathcal O|,\Theta',\rho,\delta,\lambda}.
$$
This completes the proof.
\end{proof}

\begin{proof}[Proof of Theorem \ref{th2}]
We first prove the result for the case $q>q_0:=\max\{8,2/\delta\}$ (and $q>2/\delta$ if $\sigma(0)\neq0$).
By decomposing $\sigma(u_n(s))=\big[\sigma(u_n(s))-\sigma(0)\big]+\sigma(0)$,
we write the stochastic convolution $I_2(t)=I_{21}(t)+I_{22}(t)$, where
$$
I_{21}(t):=\int_0^{t\wedge\tau_n}S(t\wedge\tau_n-s)\big[\sigma(u_n(s))-\sigma(0)\big]\,dW(s),
$$
$$
I_{22}(t):=\int_0^{t\wedge\tau_n}S(t\wedge\tau_n-s)\sigma(0)\,dW(s).
$$
Applying Lemmas \ref{le2.6} and \ref{le2.8} with exponent $q$, we obtain
$$
\mathbb{E}\big(\|A_{q}^{1/2}I_2(t)\|_{L^{q}}^{q}\big)\le2^{q-1}\(\mathbb{E}\|A_{q}^{1/2}I_{21}(t)\|_{L^{q}}^{q}+
\mathbb{E}\|A_{q}^{1/2}I_{22}(t)\|_{L^{q}}^{q}\)\le C.
$$

If $1<q\le q_0$, since $\mathcal O$ is bounded, we have the continuous embedding
$W_0^{1,q_0+1}(\mathcal O)\hookrightarrow W_0^{1,q}(\mathcal O)$.
Recall the norm equivalence
$\|A_r^{1/2}v\|_{L^r}\asymp \|v\|_{W_0^{1,r}(\mathcal O)}$ for $r\ge2$.
For each sample path,
$$
\|I_2(t)\|_{W_0^{1,q}} \le C_{q,q_0}\|I_2(t)\|_{W_0^{1,q_0+1}}.
$$
Taking $q$-th power, expectation and applying the inequality
$\mathbb{E} \big(X^q\big) \le \bigl(\mathbb{E} X^{q_0+1} \bigr)^{q/(q_0+1)}$, we get
$$
\begin{aligned}
\mathbb{E}\big(\|A_q^{1/2}I_2(t)\|_{L^q}^q\big)
&\le C\,\mathbb{E}\big(\|I_2(t)\|_{W_0^{1,q_0+1}}^q\big) \le C \Big[\mathbb{E}\Big(\|I_2(t)\|_{W_0^{1,q_0+1}}^{q_0+1}\Big)\Big]^{\frac{q}{q_0+1}}
\le C.
\end{aligned}
$$

On the other hand, for the deterministic integral part, it is easy to verify that
$$
\mathbb{E}\big(\|A_q^{1/2}I_1(t)\|_{L^q}^q\big)
\le C_{\|A_q^{1/2}u_0\|_{L^q},q,n,|\mathcal O|,\lambda}.
$$

Finally, combining the estimates for $I_1$ and $I_2$, we conclude that
\begin{align*}
\mathbb{E}\big(\|A_q^{1/2}u_n(t)\|_{L^q}^q\big)
&\le 2^{q-1}\Big[\mathbb{E}\big(\|A_q^{1/2}I_1(t)\|_{L^q}^q\big)
+\mathbb{E}\big(\|A_q^{1/2}I_2(t)\|_{L^q}^q\big)\Big] \\
&\le C,
\end{align*}
where $C>0$ depends on $\|A_q^{1/2}u_0\|_{L^q}$, $q$, $d$, $n$, $|\mathcal O|$, $\Theta$, $\Theta'$, $\delta$ and $\lambda$, and is independent of $\Theta'$ and $\de$ whenever $\sigma(0)=0$.
The proof is complete.
\end{proof}\section{Global existence and mean dissipativity of mild solutions}\label{se4}

\subsection{Assumptions and main result}
This section is dedicated to proving  the global existence and mean dissipativity of mild solutions to equation \eqref{eq}  (obtained in Proposition \ref{th1}) under some additional assumptions on $f$ and $\sig$.

\hypertarget{H}{}\paragraph{(H2)} \label{H2}
(Dissipativity and polynomial growth conditions) There exist $q>d+6$ and $r\ge1$ such that
\begin{equation}\label{eq3.9}
f(u)u +(qr-1)\Theta\,|\sigma(u)|^2 \le -c_1 u^2 + M_{q,r}, \Hs  u \in \mathbb{R}
\end{equation}
and 
\be\label{eq4.16}|f(u)|\vee|\sig(u)|\le c_2(|u|^r+1),\Hs u\in\R,\ee
for some constants $c_1,c_2 > 0$, where $
\Theta = \sum_{j=1}^{\infty} \mu_j \|e_j\|_{C_0}^2<\8$ and $M_{q,r}>0$ is a constant depending on $q$ and $r$.
\Vs
\hypertarget{H}{}\paragraph{(H3)} \label{H3}
(Polynomial Lipschitz and one-sided conditions) There exist constants $c_3 > 0$ such that
\begin{equation}\label{eq3.5}
|f(u)-f(v)| \vee |\sigma(u)-\sigma(v)| \le c_3\big(1 + |u|^{r-1}+|v|^{r-1}\big)|u-v|, \Hs  u,v \in \mathbb{R},
\end{equation}
\be\label{eq4.5}
(f(u)-f(v))(u-v) + (qr-1)\Theta\,|\sigma(u)-\sigma(v)|^2 \le N_{q,r} |u-v|^2 , \Hs  u,v \in \mathbb{R},
\ee where $q$ and $r\geq1$ are the same as that in \hyperref[H2]{(H2)} and $N_{q,r}>0$ is a constant depending on $q$ and $r$.

\vs

\begin{example}\label{ex1}
Assume that $\chi\ge1$ and $\beta+1> 2\chi$. A canonical example of functions $f,\sigma$ satisfying conditions \hyperref[H2]{(H2)} and \hyperref[H3]{(H3)} is given by
$$
f(u) = \sum_{j=1}^{\beta} b_j u|u|^{j-1}, \Hs \text{with } b_\beta < 0,
$$
and $\sigma$ being a polynomial satisfying
$$
|\sigma(u)| \leq c(|u|^\chi + 1).
$$It is easy to verify that for {\sl any} $q \ge2$ and $r = \beta$,  $f$ and $\sigma$ satisfy \hyperref[H2]{(H2)} and \hyperref[H3]{(H3)}.
\end{example}
The main result of the paper is given as follows.
\begin{theorem}\label{th}
Assume that \hyperref[H1]{(H1)}--\hyperref[H3]{(H3)} hold and that $u_0 \in C_0(\overline{\cO})$. 
Then the $C_0(\overline{\mathcal{O}})$-valued mild solution of equation \eqref{eq1} exists globally in time, i.e., $\tau(\w)=\8$ for a.e. $\w\in\W$. Moreover, for any $p\le q$, the mild solution $u$ exhibits mean dissipativity in the sense that
\begin{equation}\label{eq4.1}
\mathbb{E}\| u(t) \|_{C_b}^p \le C \big(\| u_0 \|_{C_b}^{pr} e^{-\varrho_{pr}(t-1)} + 1 \big), \Hs t \ge 1,
\end{equation}
where $\varrho_{pr}:=pr\min\{1/2,c_1/4\}>0$  and $C> 0$ depends on $p$, $d$, $r$, $|\mathcal{O}|$ and $\Theta$.
\end{theorem}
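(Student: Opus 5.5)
We would argue in three stages: first under the strengthened hypotheses (H1*) and $u_0\in D(A_0^{1/2})$, then an $L^q\to C_0$ upgrade, and finally a double-index approximation to return to (H1) and $u_0\in C_0(\overline{\cO})$.

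\emph{Stage 1: Itô estimates for the stopped process.} Assume (H1*) and $u_0\in D(A_0^{1/2})$. By Theorem \ref{th2}, $u_n(t)\in W_0^{1,q}(\cO)=D(A_q^{1/2})$ with $\E\int_0^T\|A_q^{1/2}u_n\|_{L^q}^q\,dt<\infty$. This justifies applying the Itô formula, for instance through the Yosida regularizations $J_\epsilon=(I+\epsilon A_q)^{-1}$ and then letting $\epsilon\to0$, to the functional $\|u_n(t)\|_{L^{p}}^{p}$ with $2\le p\le qr$. We obtain
\[
d\|u_n\|_{L^p}^p=p\Big(-\!\int|u_n|^{p-2}u_n A u_n+\int|u_n|^{p-2}u_nf(u_n)+\tfrac{p-1}{2}\sum_j\mu_j\int|u_n|^{p-2}\sigma(u_n)^2e_j^2\Big)dt+dM.
\]
The Laplacian term equals $-(p-1)\int|u_n|^{p-2}|\nabla u_n|^2\le0$; this is exactly where $W^{1,q}$ regularity is needed. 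The noise sum is bounded by $\Theta\int|u_n|^{p-2}\sigma(u_n)^2$. Since $p-1\le qr-1$, (H2) gives $f(u)u+\tfrac{p-1}{2}\Theta\sigma^2\le -c_1u^2+M$. Young's inequality then yields
\[
\tfrac{d}{dt}\E\|u_n\|_{L^p}^p\le-\varrho_p\E\|u_n\|_{L^p}^p+C.
\]
Gronwall's inequality gives $\E\|u_n(t)\|_{L^p}^p\le e^{-\varrho_p t}\|u_0\|_{L^p}^p+C$, uniformly in $n$. The stopping makes the martingale a true martingale.

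\emph{Stage 2: upgrading to $C_0$ and global existence.} Write the mild formula on the window $[t-1,t]$:
\[
u(t)=S(1)u(t-1)+\int_{t-1}^t S(t-s)f(u)\,ds+\int_{t-1}^tS(t-s)\sigma(u)\,dW.
\]
Use the $L^{p}\to C_b$ ultracontractivity $\|S(s)\|_{L^{p}\to C_b}\le Cs^{-d/(2p)}$ together with the factorization formula \eqref{eq3.10} and BDG, as in Lemma \ref{lem:Znu-estimate}. Combined with the growth bound \eqref{eq4.16}, this gives
\[
\E\|u(t)\|_{C_b}^p\lesssim \E\|u(t-1)\|_{L^p}^p+\sup_{s\in[t-1,t]}\E\|u(s)\|_{L^{pr}}^{pr}+1.
\]
The condition $q>d+6$ is what makes the exponents admissible: the kernel $s^{\nu-1-d/(2q)}$ must be integrable after Hölder. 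Inserting the Stage 1 bound with exponent $pr$ gives \eqref{eq4.1} for $u_n$. Since the bound is uniform in $n$, Chebyshev's inequality gives $\mathbb P(\tau_n\le T)\to0$, hence $\tau=\infty$ a.s., and Fatou's lemma transfers the estimate to $u$.

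\emph{Stage 3: removing (H1*) and the regularity of $u_0$ (the main obstacle).} Choose $u_0^m\in D(A_0^{1/2})$ with $u_0^m\to u_0$ in $C_0$. Choose truncated intensities $\mu_{jm}=\mu_j\mathbbm 1_{j\le m}$, which satisfy (H1*) and have $\Theta_m\le\Theta$, so (H2)–(H3) still hold with the same constants. Let $u^m$ be the corresponding solutions; Stages 1–2 give bounds for them independent of $m$. To show $\{u^m\}$ is Cauchy, apply Itô to $\|u^m-u^k\|_{L^p}^p$ (again justified by Theorem \ref{th2}), localized by the stopping times of both processes. Use the one-sided condition \eqref{eq4.5} for the difference and control the cross term from the differing noise by $\Theta_{mk}$ times polynomial moments, via \eqref{eq3.5} and the uniform bounds. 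This gives
\[
\E\|u^m(t)-u^k(t)\|_{L^p}^p\le e^{Ct}\big(\|u_0^m-u_0^k\|^p+C\Theta_{mk}\big)\to0.
\]
Then upgrade to $C_0$ via the same mild-formula argument as in Stage 2, using \eqref{eq3.5} and Hölder with the uniform higher moments. The limit is a mild solution of \eqref{eq}, so by the uniqueness in Proposition \ref{th1} it coincides with $u$. The $m$-independent bounds then pass to the limit, giving \eqref{eq4.1}, after bounding $\|u_0\|_{L^{pr}}\le C\|u_0\|_{C_b}$. The delicate part is keeping every constant uniform in $m$; in particular the cross-noise term must be controlled only through $\Theta_{mk}$, and the localization must be removed via uniform moments before passing $m,k\to\infty$.
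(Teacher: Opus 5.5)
Your three stages follow the paper's proof in the same order: Lemma~\ref{le4.2} and Theorem~\ref{th:global} under (H1*), then the double-index approximation with the It\^o--Gronwall Cauchy estimate. Two of your variations are sound. The truncation $\mu_{jm}=\mu_j\mathbbm{1}_{\{j\le m\}}$ serves as well as the paper's damping $\lambda_j^{-1/m}\mu_j$. Chebyshev also genuinely simplifies the paper's Kolmogorov/Borel--Cantelli step: $\|u_n(T)\|_{C_b}\ge n$ on $\{\tau_n\le T\}$, so $\mathbb{P}(\tau_n\le T)\le n^{-q}\,\mathbb{E}\|u_n(T)\|_{C_b}^q$, and no time-H\"older estimates are needed.

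The gap is in the bound you feed into Chebyshev. The window identity $u(t)=S(1)u(t-1)+\cdots$ holds for $u$ on $\{t<\tau\}$, not for the stopped process. Moreover, \eqref{eq4.1} for $u_n$ cannot hold uniformly in $n$ and $t$ in general, since $\mathbb{E}\|u_n(t)\|_{C_b}^q\ge n^q\,\mathbb{P}(\tau_n\le t)$. Likewise, $\frac{d}{dt}\mathbb{E}\|u_n\|_{L^p}^p\le-\varrho_p\mathbb{E}\|u_n\|_{L^p}^p+C$ does not follow for the frozen process. Only the versions with $\mathbbm{1}_{\{s\le\tau_n\}}$, as in \eqref{eq3.10'}--\eqref{eq:Lq-bound}, are available. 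The repair is the paper's ordering:
\begin{enumerate}
\item use the stopped mild formula \eqref{eq1.1} from time $0$, with the integrated bound \eqref{eq3.10'}, to get $\sup_{t\le T}\mathbb{E}\|u_n(t)\|_{C_b}^q\le C(T)$ uniformly in $n$, as in \eqref{eq4.14};
\item conclude $\tau=\infty$ and pass Stage 1 to $u$ by Fatou;
\item only then run the window argument for $u$ itself.
\end{enumerate}

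There are also some smaller points.
\begin{itemize}
\item \emph{Exponent $p\le q$.} The ultracontractivity/factorization estimate closes only for a large exponent. Prove the $C_b$ bound at exponent $q$ and obtain $p\le q$ by Jensen, using $\frac{p}{q}\varrho_{qr}=\varrho_{pr}$.
\item \emph{Moments for the cross-noise term.} For $r>1$, the cross-noise term in the It\^o formula for $\|u^m-u^k\|_{L^{qr}}^{qr}$ is pointwise of order $1+|u^m|^{qr+2r-2}+|u^k|^{qr+2r-2}$. This is beyond the range $p\le qr$ stated in your Stage 1. Your own coefficient $\frac{p-1}{2}$ shows that (H2) controls $L^p$ moments for all $p\le 2qr-1$, which covers it. The paper's Lemma~\ref{le4.6} glosses over this by claiming every exponent.
\item \emph{Path continuity of the limit.} To invoke uniqueness and get $\tau=\infty$, you need the Cauchy property in $L^q(\Omega;C([0,T];C_0(\overline{\cO})))$. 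The factorization bounds give this, because they are pathwise uniform in $t\le T$.
\item \emph{Role of $W^{1,q}$ regularity.} The gradient term is not ``exactly where $W^{1,q}$ regularity is needed''.
  \begin{itemize}
  \item At the level of $J_\epsilon u_n\in D(A_p)$, the Laplacian contribution is $-p(p-1)\int|J_\epsilon u_n|^{p-2}|\nabla J_\epsilon u_n|^2\,dx\le0$. It can be dropped before $\epsilon\to0$.
  \item The remaining terms converge by dominated convergence, since $\|u_n\|_{C_b}\le n$ on $[0,\tau_n]$ and $\Theta<\infty$.
  \item As you only use inequalities, this yields Stage 1 directly for $u_0\in C_0(\overline{\cO})$ under (H1). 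Combined with your Chebyshev step, which needs no regularity of $u_0$, it would make Theorem~\ref{th2} and Stage 3 unnecessary.
  \end{itemize}
\end{itemize}
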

\begin{remark}
Strictly speaking, the norm denoted by $\|\cdot\|_{C_b}$ in  \eqref{eq4.1} is the norm on $C_0(\overline{\cO})$, since $u(t)\in C_0(\overline{\cO}) $, and thus should be written as $\|\.\|_{C_0}$. However, since the two norms coincide on $C_0(\overline{\cO})$ and both spaces will appear in the proof, for simplicity we continue to denote it uniformly by $\|\cdot\|_{C_b}$.
\end{remark}





\subsection{An auxiliary result under stronger assumptions}
As a preparation for the proof of Theorem \ref{th}, we first establish the following result under stronger assumptions on the initial data and the noise. This auxiliary result, combined with \hyperref[H3]{(H3)}, will then allow us to obtain Theorem \ref{th} via an approximation argument.

\begin{theorem}\label{th:global}
Assume \hyperref[H1*]{(H1*)} and \hyperref[H2]{(H2)} hold.
Then for any $u_0 \in D(A_0^{1/2})$, the conclusions of Theorem \ref{th} follow.
\end{theorem}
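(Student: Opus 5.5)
The plan is to use Theorem \ref{th2} to justify applying the It\^o formula to $\|u_n(t)\|_{L^{m}}^{m}$ for the stopped process, with exponents $m$ up to $qr$. From this we derive a dissipative moment inequality uniform in $n$, let $n\to\infty$ to obtain $\tau=\infty$ a.s., and finally convert $L^{m}$ moment bounds into $C_b$ bounds through the smoothing of the mild formulation over a unit time interval.

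\textbf{Step 1: It\^o formula.} Fix $m\in[2,qr]$. By Theorem \ref{th2}, $u_n\in W_0^{1,m}(\mathcal O)=D(A_m^{1/2})$ with $\mathbb E\int_0^T\|A_m^{1/2}u_n\|_{L^m}^m\,dt<\infty$, and $\|u_n\|_{C_b}\le n$. This is exactly what is needed to make sense of
\[
\int_{\mathcal O}|u_n|^{m-2}u_n\,\Delta u_n\,dx=-(m-1)\int_{\mathcal O}|u_n|^{m-2}|\nabla u_n|^2\,dx\le 0 .
\]
To apply It\^o rigorously, I would apply it first to the Yosida/regularized approximations $R_\varepsilon u_n=\varepsilon^{-1}(\varepsilon^{-1}+A)^{-1}u_n$, which are strong solutions, and then pass $\varepsilon\to0$ using the $W^{1,m}$ bound. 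The result, on $[0,t\wedge\tau_n]$, is
\[
d\|u_n\|_{L^m}^m\le m\!\int_{\mathcal O}|u_n|^{m-2}\Big(f(u_n)u_n+\tfrac{m-1}{2}\Theta|\sigma(u_n)|^2\Big)dx\,dt+dM_t ,
\]
where $\sum_j\mu_j|e_j|^2\le\Theta$ has been used pointwise.

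\textbf{Step 2: dissipative moment bounds.} Since $(m-1)/2\le qr-1$, (H2) gives $f(u)u+\tfrac{m-1}{2}\Theta|\sigma(u)|^2\le -c_1u^2+M$. Young's inequality then yields
\[
\frac{d}{dt}\mathbb E\|u_n\|_{L^m}^m\le -\tfrac{mc_1}{2}\mathbb E\|u_n\|_{L^m}^m+C .
\]
The martingale $M_t$ is a true martingale because the process is stopped. Here one must be careful: after $\tau_n$ the process is frozen, so I would work with $e^{\varrho t\wedge\tau_n}\|u_n(t)\|^m$ and the optional stopping theorem. This gives
\[
\mathbb E\|u(t\wedge\tau_n)\|_{L^m}^m\le \|u_0\|_{L^m}^m e^{-\varrho_m t}+C
\]
uniformly in $n$.

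\textbf{Step 3: global existence.} To obtain $\tau=\infty$, I need control of $\sup_{t\le T}\|u\|_{C_b}$, not merely of $L^m$ norms. I would first take the $L^m$ bound with $m=qr$, $q>d+6$. Then, in the mild formula, I would use the smoothing $\|S(t)\|_{\mathcal L(L^{q},C_b)}\lesssim t^{-d/(2q)}$ together with $|f|,|\sigma|\le c_2(|u|^r+1)$, so that $\|f(u)\|_{L^q}\lesssim\|u\|_{L^{qr}}^r+1$. For the stochastic convolution I would use factorization with $\nu>1/q+d/(2q)$, which is admissible since $q>d+6$ leaves room, and the BDG inequality. This bounds $\mathbb E\sup_{t\le T\wedge\tau_n}\|u\|_{C_b}^q$ by $C(T)$, independent of $n$. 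Hence $\mathbb P(\tau_n\le T)\le C(T)/n^q\to0$, so $\tau=\infty$ a.s.

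\textbf{Step 4: dissipative $C_b$ estimate.} For $t\ge1$, write
\[
u(t)=S(1)u(t-1)+\int_{t-1}^t S(t-s)f(u)\,ds+\int_{t-1}^tS(t-s)\sigma(u)\,dW .
\]
By the same smoothing and factorization estimates,
\[
\mathbb E\|u(t)\|_{C_b}^p\lesssim \mathbb E\|u(t-1)\|_{L^{p}}^{p}+\sup_{s\in[t-1,t]}\mathbb E\|u(s)\|_{L^{pr}}^{pr}+1 .
\]
Step 2 with $m=pr\le qr$ bounds the right side by $\|u_0\|_{L^{pr}}^{pr}e^{-\varrho_{pr}(t-1)}+C$, and $\|u_0\|_{L^{pr}}\le|\mathcal O|^{1/(pr)}\|u_0\|_{C_b}$.

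\textbf{Main obstacle.} I expect the hardest step to be the rigorous It\^o formula in Step 1. The regularization must commute well enough with the nonlinear terms, and the error terms must vanish in the limit, which is exactly where the $W^{1,m}$ integrability from Theorem \ref{th2} is used.
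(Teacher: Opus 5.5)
Your plan has the same skeleton as the paper's proof: It\^o formula for the stopped process, the dissipative $L^{qr}$ bound of Lemma~\ref{le4.2}, $n$-uniform $C_b$ bounds plus Markov's inequality to get $\tau=\infty$, and the splitting $u(t)=S(1)u(t-1)+\int_{t-1}^t\cdots$ for $t\ge1$. Several ingredients differ, though.

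\textbf{Step 3.} You bound $\mathbb{E}\sup_{t\le T}\|u_n(t)\|_{C_b}^q$ directly from the factorization formula, since H\"older's inequality in $s$ lets the supremum be taken inside. The paper instead proves fixed-time bounds and H\"older-in-time moment bounds for $J_1,\dots,J_5$, and then invokes the Kolmogorov-type Corollary~\ref{c3}. Your route is shorter and only needs $\frac1q+\frac{d}{2q}<\nu<\frac12-\frac1q$, i.e.\ $q>d+4$. It is the extra time exponent $\gamma$ that pushes the paper to $q>d+6$ in Lemma~\ref{le4.3}.

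\textbf{Step 4.} Restarting the factorization at time $t-1$ is cleaner than the paper's treatment of $K_2$. It also keeps the decay rate $c_1pr/2$ from Step 2 without loss.

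\textbf{Step 1.} You regularize with $(I+\varepsilon A)^{-1}$, whereas the paper passes through the weak formulation and a cited $L^p$-It\^o lemma. In your scheme the term
$m\int_{\mathcal O}|v_\varepsilon|^{m-2}v_\varepsilon\,A_m v_\varepsilon\,dx=m(m-1)\int_{\mathcal O}|v_\varepsilon|^{m-2}|\nabla v_\varepsilon|^2\,dx$
is nonnegative and can simply be discarded. Every other term converges as $\varepsilon\to0$ by dominated convergence, using only $\|u_n\|_{C_b}\le n$ and the $L^\infty$-contractivity of $(I+\varepsilon A)^{-1}$. So along your route, the inequality you need does not actually require the $W^{1,m}$ bound of Theorem~\ref{th2}. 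Since that bound is the only place (H1*) and $u_0\in D(A_0^{1/2})$ enter, your route would not need those hypotheses either. The step you flag as the main obstacle is routine.

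Two points need tightening.

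First, the weighted argument in Step 2 does not yield the unconditional bound you state for $u(t\wedge\tau_n)$. On $\{\tau_n<t\}$ the weight is only $e^{\varrho\tau_n}$, and the frozen value $\|u(\tau_n)\|_{L^m}^m$ has no reason to decay. What you actually get is $\mathbb{E}\big[\mathbbm{1}_{\{t\le\tau_n\}}\|u(t)\|_{L^m}^m\big]\le e^{-\varrho_m t}\|u_0\|_{L^m}^m+C$, which is the paper's \eqref{eq:Lq-bound}. This suffices for both later uses:
\begin{itemize}
\item Step 3 only uses $\mathbb{E}\int_0^T\mathbbm{1}_{\{s\le\tau_n\}}\|u(s)\|_{L^{qr}}^{qr}\,ds$.
\item Step 4 needs the bound for $u(t)$ itself, which follows by Fatou's lemma once $\tau_n\to\infty$.
\end{itemize}

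Second, Step 4 cannot be run with spatial and moment exponent $p$ when $p$ is small. The factorization estimate with $L^p$-norms needs $p>d+4$, and your Step 2 is restricted to $pr\ge2$. Prove the case $p=q$ instead, and then use $\mathbb{E}\|u(t)\|_{C_b}^p\le\big(\mathbb{E}\|u(t)\|_{C_b}^q\big)^{p/q}$, as the paper does at the end of the proof of Theorem~\ref{th}.
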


Before proving Theorem \ref{th:global}, we make some preliminary observations.

\begin{lemma}\label{le4.2} Let $u_n(t) := u(t \wedge \tau_n)$, $t \ge 0$, be the corresponding stopped process, where $\tau_n := \inf\{t \ge 0 : \|u(t)\|_{C_b} \ge n\}$.
 Assume \hyperref[H1*]{(H1*)} and  the dissipativity condition \eqref{eq3.9} hold. 
Then there exist constants $\~R_{qr},R_{qr}>0$ such that 
\begin{equation}\label{eq3.10'}
\mathbb{E}\big(\|u_n(t)\|_{L^{qr}}^{qr}\big)+\rho_{qr}\int_0^t \E\left(\mathbbm{1}_{\{s\le\tau_n\}}\|u_n(s)\|_{L^{qr}}^{qr}\right)ds
\le \|u_0\|_{L^{qr}}^{qr}
+\~R_{qr},\Hs t\ge0,
\end{equation}
\begin{equation}\label{eq:Lq-bound}
\mathbb{E}\left(\mathbbm{1}_{\{t\le\tau_n\}}\|u_n(t)\|_{L^{qr}}^{qr}\right)
\le e^{-\rho_{qr}t}\|u_0\|_{L^{qr}}^{qr}
+R_{qr},\Hs t\ge0,
\end{equation}where $\rho_{qr}:=c_1 qr/2$.
\end{lemma}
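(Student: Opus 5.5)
We will apply the It\^o formula to the functional $\Phi(v):=\|v\|_{L^{qr}}^{qr}=\int_{\cO}|v|^{qr}\,dx$ along the stopped process $u_n$. The justification comes from Theorem \ref{th2}, which we use with exponent $qr$: under \hyperref[H1*]{(H1*)} and $u_0\in D(A_0^{1/2})$ it gives $u_n(t)\in W_0^{1,qr}(\cO)=D(A_{qr}^{1/2})$, with $\E\int_0^T\|A_{qr}^{1/2}u_n\|_{L^{qr}}^{qr}dt<\infty$.

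\textbf{Step 1: regularization and It\^o formula.} We first regularize, setting $u_n^\varepsilon:=(I+\varepsilon A)^{-1}u_n$. This process is a strong solution of
$$du_n^\varepsilon=\mathbbm 1_{\{t\le\tau_n\}}\big[(-Au_n^\varepsilon+R_\varepsilon f(u_n))\,dt+R_\varepsilon\sigma(u_n)\,dW\big],$$
with $R_\varepsilon:=(I+\varepsilon A)^{-1}$. The It\^o formula for the $C^2$ functional $\Phi$ (with $qr\ge2$) then gives
\begin{align*}
\Phi(u_n^\varepsilon(t))=\Phi(R_\varepsilon u_0)+\int_0^{t\wedge\tau_n}\Big[&-qr\!\int_{\cO}|u_n^\varepsilon|^{qr-2}u_n^\varepsilon Au_n^\varepsilon+qr\!\int_{\cO}|u_n^\varepsilon|^{qr-2}u_n^\varepsilon R_\varepsilon f(u_n)\\
&+\tfrac{qr(qr-1)}{2}\sum_j\mu_j\!\int_{\cO}|u_n^\varepsilon|^{qr-2}|R_\varepsilon(\sigma(u_n)e_j)|^2\Big]ds+M_t .
\end{align*}
Here $M_t$ is a martingale, because $\|u_n\|_{C_b}\le n$.

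\textbf{Step 2: the three terms.} Integrating by parts, the $A$-term equals $-qr(qr-1)\int|u_n^\varepsilon|^{qr-2}|\nabla u_n^\varepsilon|^2\le0$; we simply drop it. The $W^{1,qr}$ bound is what lets us pass $\varepsilon\to0$ in the remaining terms. Indeed, $R_\varepsilon v\to v$ in $L^{qr}$ (and in $W_0^{1,qr}$ for $v\in W_0^{1,qr}$), and everything is bounded by $n$. We therefore expect to obtain, in the limit,
$$\E\Phi(u_n(t))\le\Phi(u_0)+qr\,\E\int_0^{t\wedge\tau_n}\!\int_{\cO}|u_n|^{qr-2}\Big[f(u_n)u_n+\tfrac{qr-1}{2}\Theta|\sigma(u_n)|^2\Big]dx\,ds,$$
where we used $\sum_j\mu_j|e_j(x)|^2\le\Theta$. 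By \eqref{eq3.9}, the bracket is at most $-c_1u_n^2+M_{q,r}$. Young's inequality then gives
$$M_{q,r}|u|^{qr-2}\le\tfrac{c_1}{2}|u|^{qr}+C,$$
so the integrand is bounded by $-\tfrac{c_1}{2}|u_n|^{qr}+C$. This yields \eqref{eq3.10'} with $\rho_{qr}=c_1qr/2$ and $\~R_{qr}=qrC|\cO|t$. Since the constant must be uniform in $t$, we get this directly in Step 3 instead.

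\textbf{Step 3: exponential version.} To obtain \eqref{eq:Lq-bound} and a time-uniform $\~R_{qr}$, we apply the It\^o formula to $e^{\rho_{qr}t}\Phi(u_n^\varepsilon(t))$. The drift now carries the extra term $\rho_{qr}e^{\rho s}\Phi$, which cancels against $-\tfrac{c_1qr}{2}|u_n|^{qr}$ on $\{s\le\tau_n\}$. What remains is at most $Ce^{\rho s}$, which integrates to $C\rho^{-1}e^{\rho t}$. On $\{t\le\tau_n\}$ we have $u_n(t)=u(t)$, and multiplying by $e^{-\rho t}$ yields \eqref{eq:Lq-bound}. For the integral term in \eqref{eq3.10'}, we keep a fraction of the dissipation: we use Young's inequality with $\tfrac{3c_1}{4}$ so that $-\tfrac{c_1qr}{2}|u|^{qr}$ survives as a separate term. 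The time-linear constant must then be absorbed. To do this, we plan to split off $\mathbbm1_{\{s>\tau_n\}}$ (where the drift vanishes) and to use the bound on $\E\mathbbm1\|u_n\|^{qr}$ from \eqref{eq:Lq-bound}.

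\textbf{Main obstacle.} The main obstacle is the justification of the It\^o formula and the passage $\varepsilon\to0$ in the correction term, i.e.\ controlling $R_\varepsilon(\sigma(u_n)e_j)$ in $L^{qr}$ uniformly in $j$. We plan to use $\|R_\varepsilon\|_{\cL(C_b)}\le1$ (a positivity argument) together with dominated convergence in $j$ via $\Theta<\infty$. Theorem \ref{th2} is needed to make sense of the $A$-term in the limit. We also need the sign identity $\int|v|^{qr-2}vAv\ge0$, which we apply to $u_n^\varepsilon\in D(A_{qr})$ before letting $\varepsilon\to0$.
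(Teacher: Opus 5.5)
Your Steps 1--2 are correct. They arrive at exactly the inequality the paper's proof arrives at, namely
\begin{equation*}
\E\|u_n(t)\|_{L^{qr}}^{qr}+\rho_{qr}\int_0^t\E\big(\mathbbm{1}_{\{s\le\tau_n\}}\|u_n(s)\|_{L^{qr}}^{qr}\big)ds\le\|u_0\|_{L^{qr}}^{qr}+Ct,
\end{equation*}
but you justify the It\^o step differently. The paper invokes Theorem \ref{th2} with exponent $qr$ to get $u_n\in W_0^{1,qr}(\cO)$. It then passes to the weak formulation and quotes an $L^p$-It\^o formula as an identity containing $qr(qr-1)\int|u_n|^{qr-2}|\nabla u_n|^2$, and discards that term. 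You instead discard the same term at the $\varepsilon$-level, using $\int|v|^{p-2}v\,Av\,dx\ge0$ on $D(A_{qr})$. The remaining terms then converge using only:
\begin{itemize}
\item $R_\varepsilon\to I$ strongly in $L^{qr}$;
\item $\|R_\varepsilon\|_{\mathcal L(L^\infty)}\le1$;
\item $\|u_n\|_{C_b}\le n$;
\item $\Theta<\infty$.
\end{itemize}
So, contrary to what you say in Step 2 and in your last paragraph, Theorem \ref{th2} plays no role in your argument, since no gradient survives to the limit. Your route yields only an It\^o inequality, but that is all the lemma needs, and it bypasses the critical regularity estimate entirely.

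Step 3 has two problems. First, the claimed cancellation fails after $\tau_n$. Since $u_n$ is frozen there, the drift of $e^{\rho t}\Phi(u_n(t))$ contains $\rho e^{\rho s}\mathbbm{1}_{\{s>\tau_n\}}\Phi(u(\tau_n))\ge0$. Nothing cancels this term, and it is not bounded by $Ce^{\rho s}$ with $C$ independent of $n$. Use the stopped weight $e^{\rho(t\wedge\tau_n)}$ instead, whose differential is $\mathbbm{1}_{\{t\le\tau_n\}}\rho e^{\rho t}dt$. This gives $\E[e^{\rho(t\wedge\tau_n)}\Phi(u_n(t))]\le\Phi(u_0)+C(e^{\rho t}-1)/\rho$, and restricting to $\{t\le\tau_n\}$ yields \eqref{eq:Lq-bound}. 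This is in fact cleaner than the paper's one-line ``Gronwall'' from the integral inequality on $[0,t]$, which by itself does not give exponential decay. That step needs the same weight, or the inequality on every $[s,t]$ together with the monotonicity of $\E[\mathbbm{1}_{\{\tau_n<t\}}\Phi(u(\tau_n))]$ in $t$.

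Second, your plan to make $\tilde R_{qr}$ independent of $t$ cannot succeed. With constants independent of $n$, which is what is used later, that version of \eqref{eq3.10'} is false: letting $n\to\infty$ would force $\int_0^\infty\E\|u(s)\|_{L^{qr}}^{qr}ds<\infty$.
\begin{itemize}
\item Take $f(u)=-u$, $\sigma\equiv1$, $\mu_1=1$, $\mu_j=0$ for $j\ge2$, and $u_0=0$. This satisfies (H1*) and \eqref{eq3.9}.
\item The solution is $u(t)=X(t)e_1$, where $X$ is an Ornstein--Uhlenbeck process whose moments tend to positive limits.
\item Hence the time integral grows linearly in $t$.
\end{itemize}
Integrating \eqref{eq:Lq-bound} over $[0,t]$ still leaves $R_{qr}t$, so no splitting off of $\mathbbm{1}_{\{s>\tau_n\}}$ can absorb it. The paper's own proof also ends with $+C_{M_\vartheta,c_1,\vartheta}\,t$, i.e.\ $\tilde R_{qr}$ proportional to $t$. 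That is exactly how \eqref{eq3.10'} is used afterwards, in the bound $\E\|I_1(t)\|_{C_b}^q\le C(\|u_0\|_{L^{qr}}^{qr}+t+1)$. For \eqref{eq3.10'} you should stop at your Step 2.
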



\begin{proof} For simplicity, denote $\vartheta:=qr$.
In view of the identification $D(A_\vartheta^{1/2})=W_0^{1,\vartheta}(\mathcal O)$, it follows from Theorem \ref{th2} that
$$
\mathbb{E}\int_0^T \|u_n(t)\|_{W_0^{1,\vartheta}(\mathcal O)}^\vartheta\,dt < \infty,\qquad \forall\, T>0.
$$
By the equivalence of mild and weak solutions for analytic semigroups (see e.g., \cite[Theorem 6.5]{DZ}), $u_n$ also satisfies the weak formulation: for any $\varphi \in C_0^\infty(\mathcal O)$,
\begin{equation*}
\begin{aligned}
(u_n(t), \varphi) =&\, (u_0, \varphi) 
                     - \int_0^t \mathbbm{1}_{\{s \le\tau_n\}} (\nabla u_n(s),\nabla\va)\, ds \\
                   &+ \int_0^t \mathbbm{1}_{\{s \le\tau_n\}} (f(u_n(s)), \varphi) \, ds \\
                   &+ \int_0^t \mathbbm{1}_{\{s \le\tau_n\}} (\sigma(u_n(s)), \varphi) \, dW(s), \qquad 0 \le t \le T,
\end{aligned}
\end{equation*}
where $(\cdot,\cdot)$ denotes the inner product in $L^2(\mathcal O)$.

The $L^p$-norm It\^o formula for SPDEs on bounded domains with Dirichlet boundary conditions is well-established in the literature for equations with linear multiplicative noise (see e.g., \cite[Lemma 3.3]{DG15}). Following the same derivation, for our equation with nonlinear diffusion coefficient $\sigma(u_n)$, we obtain the following formula.

Applying the $L^q$-norm It\^o formula to $u_n(t)$ and taking expectations, we obtain
$$
\begin{aligned}
&\mathbb{E}\big(\|u_n(t)\|_{L^\vartheta}^\vartheta\big)
+\vartheta(\vartheta-1)\mathbb{E}\int_0^t\mathbbm{1}_{\{s\le\tau_n\}}\int_{\mathcal O}|u_n|^{\vartheta-2}|\nabla u_n|^2 dxds\\
&=\mathbb{E}\|u_0\|_{L^\vartheta}^\vartheta
+\vartheta\mathbb{E}\int_0^t\mathbbm{1}_{\{s\le\tau_n\}}\int_{\mathcal O}|u_n|^{\vartheta-2}u_n f(u_n)dxds\\
&\quad+\frac{\vartheta(\vartheta-1)}{2}\mathbb{E}\int_0^t\mathbbm{1}_{\{s\le\tau_n\}}\int_{\mathcal O}|u_n|^{\vartheta-2}|\sigma(u_n)|^2 \sum_{j=1}^{\infty} \mu_j e_j^2dxds,
\end{aligned}
$$
where the stochastic integral term vanishes after taking expectations.
\vs
From the dissipativity condition \eqref{eq3.9} and Young's inequality, we have
$$
\begin{aligned}
&\,\vartheta|u|^{\vartheta-2}u f(u)+\frac{\vartheta(\vartheta-1)}{2}|u|^{\vartheta-2}|\sigma(u)|^2\sum_{j=1}^{\infty} \mu_j e_j^2\\
\le&\, \vartheta|u|^{\vartheta-2}\left[u f(u)+(\vartheta-1)\Theta|\sigma(u)|^2\right]\\
\le&\, \vartheta|u|^{\vartheta-2}(-c_1|u|^2+M_\vartheta)\\
\le&\, -\frac{\vartheta c_1}{2}|u|^\vartheta+C_{M_\vartheta,c_1,\vartheta},
\end{aligned}
$$ where $$C_{M_\vartheta,c_1,\vartheta}:=\frac{2\sqrt{2}}{\vartheta^{3/2}\sqrt{c_1}} M_\vartheta^{\frac{\vartheta}{2}}.$$
Substituting this into the It\^o formula above yields
$$
\mathbb{E}\big(\|u_n(t)\|_{L^\vartheta}^\vartheta\big)
\le\|u_0\|_{L^\vartheta}^\vartheta
-\frac{c_1 \vartheta}{2}\int_0^t \E\left[\mathbbm{1}_{\{s\le\tau_n\}}\|u_n(s)\|_{L^\vartheta}^\vartheta\right]ds
+C_{M_\vartheta,c_1,\vartheta}\,t.
$$
By Gronwall's inequality, we obtain
$$
\mathbb{E}\left(\mathbbm{1}_{\{t\le\tau_n\}}\|u_n(t)\|_{L^\vartheta}^\vartheta\right)
\le e^{-\frac{c_1 \vartheta}{2}t} \|u_0\|_{L^\vartheta}^\vartheta
+\frac{2C_{M_\vartheta,c_1,\vartheta}}{c_1 \vartheta},
$$ which completes the proof.
\end{proof}

\begin{lemma}\label{le4.3}
For any $d\in\mathbb N^+$, there exist parameters $\nu,\gamma\in(0,1/2)$ satisfying
$$
\begin{cases}
0<\frac{d}{2(q-1)}<1,\\[0.3em]
0<\frac{2\nu q}{q-2}<1,\\[0.3em]
0<\left(1+\frac{d}{2q}-\nu+\gam\right)\frac{q}{q-1}<1,\\[0.3em]
0<(\gamma+\frac{d}{2q})\frac{q}{q-1}<1,\\[0.3em]
q\gamma>1,\\[0.3em]
q\nu>\frac{d}{2},
\end{cases}
$$
if and only if $$q>d+6.$$ 
\end{lemma}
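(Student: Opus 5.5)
The proof reduces the system to explicit inequalities on $\nu$ and $\gamma$ and compares the resulting admissible interval for $\nu$. Throughout, $q>2$: this is forced by the second condition and also holds if $q>d+6$. Under $q>2$ the conditions rewrite as follows.
\begin{itemize}
\item The first is $q>1+\frac d2$.
\item The second is $\nu<\frac12-\frac1q$.
\item The third is $\nu-\gamma>\frac{d+2}{2q}$. Its left-hand positivity, $1+\frac d{2q}-\nu+\gamma>0$, is automatic because $\nu<\frac12$.
\item The fourth is $\gamma<1-\frac{1+d/2}{q}$. Its positivity is trivial.
\item The fifth is $\gamma>\frac1q$.
\item The sixth is $\nu>\frac d{2q}$.
\end{itemize}
So the whole system is equivalent to a pair of linear inequalities in $(\nu,\gamma)$, together with the box constraints $\nu,\gamma\in(0,1/2)$.

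\emph{Necessity.} Combine the third and fifth inequalities to get $\nu>\gamma+\frac{d+2}{2q}>\frac1q+\frac{d+2}{2q}=\frac{d+4}{2q}$. The second inequality gives $\nu<\frac12-\frac1q$. Hence $\frac{d+4}{2q}<\frac{q-2}{2q}$, which is exactly $q>d+6$.

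\emph{Sufficiency.} Let $q>d+6$. Pick $\gamma=\frac1q+\epsilon$ with $\epsilon>0$ small. Then the interval $\bigl(\gamma+\frac{d+2}{2q},\,\frac12-\frac1q\bigr)$ is nonempty, since at $\epsilon=0$ its endpoints are $\frac{d+4}{2q}<\frac{q-2}{2q}$ and both endpoints depend continuously on $\epsilon$. Choose $\nu$ in this interval. The remaining conditions are then checked one by one.
\begin{itemize}
\item The sixth follows from the third, since $\nu>\frac{d+2}{2q}>\frac d{2q}$.
\item The fourth holds for small $\epsilon$ because $\frac1q<1-\frac{1+d/2}{q}$ is equivalent to $q>2+\frac d2$, which holds.
\item The first holds since $q>d+6>1+\frac d2$.
\item For the box constraints, $\gamma<\frac12$ because $q>6$ and $\epsilon$ is small, and $0<\nu<\frac12-\frac1q<\frac12$.
\end{itemize}

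The only step needing care is recognizing that the binding constraints are the third, second and fifth. Chaining them produces $q>d+6$; every other inequality is strictly weaker under that hypothesis. The argument is therefore elementary, and I expect no real obstacle beyond the bookkeeping of rewriting each condition correctly, which uses $q>1$ so that multiplying by $\frac{q-1}{q}$ preserves inequalities.
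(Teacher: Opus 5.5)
Your proof is correct and follows essentially the paper's route: you rewrite each condition as a linear constraint in $(\nu,\gamma)$ and chain $\frac1q<\gamma<\nu-\frac{d+2}{2q}<\frac12-\frac1q$ to get $q>d+6$; for sufficiency you take $\gamma$ just above $\frac1q$ and $\nu$ close to $\frac12-\frac1q$, where the paper fixes $\nu_0=\frac12-\frac1q-\varepsilon$ and $\gamma_0=\frac1q+\varepsilon$. One cosmetic point: the second condition alone only forces $q>2$ or $q<0$, so you should cite the first (or fifth) condition to rule out $q<0$.
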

We have provided a proof of Lemma \ref{le4.3} in Appendix \ref{A.4}.
\Vs

As a result of Lemma~\ref{le4.3}, the following integrals are convergent.
\begin{align*}
\Gamma_1:=\int_0^{t} (t - s)^{-\frac{d}{2(q-1)}} \,
        e^{-\frac{\lambda (t - s)}{2} \cdot \frac{q}{q-1}} \, ds<\8;
\end{align*}
\begin{align*}
\Gamma_2:=\int_0^t(t-s)^{-\frac{2\nu q}{q-2}}e^{-2\lambda(1-\ve)\frac{q}{q-2}(t-s)}ds<\8,
\end{align*}where $0<\ve<1$;
\begin{align*}\Gamma_3:=\int_0^{t}(t-s)^{-(1+\frac{d}{2q}-\nu)\frac{q}{q-1}}e^{-\frac{\lam}{2}\frac{q}{q-1}(t-s)}ds<\8;
\end{align*}
\begin{align*}
\Gamma_4:=\int_0^t(t-s)^{-(\gamma+\frac{d}{2q})\frac{q}{q-1}}e^{-\lambda\frac{q}{q-1}\frac{t-s}{2}}ds
              <\8;
\end{align*}
\begin{align*}
\Gamma_5:=\int_0^t(t-s)^{-(1+\frac{d}{2q}-\nu+\gam)\frac{q}{q-1}}e^{-\lam\frac{(t-s)q}{2{q-1}}}ds<\8.
\end{align*}

\vs

\bl Assume \hyperref[H1*]{(H1*)} and \hyperref[H2]{(H2)} hold. Then for any $u_0 \in D(A_0^{1/2})$, the mild solution $u$ is global in time, i.e., for a.e. $\w\in\W$, $\tau(\w)=\8$. \el
 \begin{proof}Let $u_n(t) := u(t \wedge \tau_n)$, $t \ge 0$, be the corresponding stopped process, where $\tau_n := \inf\{t \ge 0 : \|u(t)\|_{C_0} \ge n\}$. To establish the global existence of $u(t)$ in $C_b(\mathcal{O})$ for a.e. $\omega\in\W$, it suffices to show that $\tau_n \to \infty$ as $n \to \infty$ almost surely. 
\vs
Thanks to Lemma \ref{le4.2}, we first establish estimates for each term in \eqref{eq1.1} and \eqref{eq3.13} that are uniform in $n$. 

We shall make use of the following smoothing estimate for the semigroup, which follows from \cite[Proposition 48.4* (e)]{QS}:
\begin{equation}\label{eq3.4}
\begin{split}
\|S(t) u \|_{C_b}
\le C \|S(t/2)\|_{\cL(C_b)}\|S(t/2) u \|_{C_b}\le C_{q,d}\, e^{-\frac{\lam t}{2}} t^{-\frac{d}{2q}} \| u \|_{L^q}, \quad t\ge0,\; u \in L^q(\mathcal{O}).
\end{split}
\end{equation}

Combining \eqref{eq3.4} with the growth condition \eqref{eq4.16}, we obtain
\begin{equation*}
\begin{split}
\big\| S(t \wedge\tau_n - s) f(u_n(s)) \big\|_{C_b}
\le&\, C_{q,d}\, e^{-\frac{\lambda (t \wedge\tau_n - s)}{2}} (t \wedge\tau_n - s)^{-\frac{d}{2q}} \| f(u_n(s)) \|_{L^q} \\
\le&\, C_{q,d}\, e^{-\frac{\lambda (t \wedge\tau_n - s)}{2}} (t \wedge\tau_n - s)^{-\frac{d}{2q}}
        \big( \| u_n(s) \|_{L^{qr}}^{r} + 1 \big).
\end{split}
\end{equation*} 
Then, by H\"older's inequality,
\begin{equation}\label{eq4.2}\begin{split}
      \Big[\int_0^{t \wedge \tau_n} \big\|S(t \wedge \tau_n - s) f(u_n(s)) \big\|_{C_b} \, ds\Big]^{q} 
\le&\, C_{q,d} \Gamma_1^{q-1}
         \Big(\int_0^t \mathbbm{1}_{\{s \le\tau_n\}} \| u_n(s) \|_{L^{qr}}^{qr} \, ds +1 \Big)\\
\le&\, C_{q,d,\lambda} \Big[ \Big( \int_0^t \mathbbm{1}_{\{s \le\tau_n\}} \| u_n(s) \|_{L^{qr}}^{qr} \, ds \Big)^{\frac{1}{q}} + 1 \Big],
\end{split}
\end{equation}
where $C_{q,d,\lambda}$ depends on $q$, $d$, $\lambda$ (via $\Gamma_1$, $\Gamma_2$ and $C_{q,d}$).
Consequently,
\begin{equation*}\begin{split}\label{eq4.10}
\mathbb{E}\big(\| I_1(t) \|_{C_b}^q\big) \le&\, 
          C_{q,d,\lambda} \Big( \mathbb{E}\int_0^t\mathbbm{1}_{\{s \le\tau_n\}} \| u_n(s) \|_{L^{qr}}^{qr} ds + 1 \Big)\\
\stackrel{\eqref{eq3.10'}}{\le}&\, C_{q,d,\lambda,r} \big(\| u_0 \|_{L^{qr}}^{qr} + t + 1 \big).\end{split}
\end{equation*}
\vs

Recall the factorization formula for $I_2(t)$:
$$
I_2(t)=\frac{\sin(\pi\nu)}{\pi}\int_0^{t\wedge\tau_n}(t\wedge\tau_n-s)^{\nu-1}S(t\wedge\tau_n-s)Z_\nu^n(s)\,ds,
$$
where the auxiliary process $Z_\nu^n$ is defined by
$$
Z_\nu^n(t):=\int_0^t(t-s)^{-\nu}S(t-s)\sigma(u_n(s))\mathbbm{1}_{\{s\le\tau_n\}}\,dW(s),\qquad t\ge0.
$$
Let $\ve$ be a  parameter satisfying $\ve<\min\{1,c_1/2\}=\min\{1,\rho_{qr}/(qr)\}$.  Applying \eqref{eq3.6} (with $\a=0$)  yields
$$\begin{aligned}
&\mathbb{E}\big(\|Z_\nu^n(t)\|_{L^q}^q\big)\\
\le& C_{q,|\mathcal O|}\Theta^\frac{q}{2}\,
\E\Bigg(\int_0^t(t-s)^{-2\nu}e^{-2\lambda(t-s)}\|\sig(u_n(s))\|_{L^q}^2\mathbbm{1}_{\{s\le\tau_n\}}\,ds\Bigg)^{q/2} \\
=& C_{q,|\mathcal O|}\Theta^\frac{q}{2}\,
\E\Bigg(\int_0^t[(t-s)^{-2\nu}e^{-2\lambda(1-\ve)(t-s)}]\.[e^{-2\lam\ve(t-s)}\|\sig(u_n(s))\|_{L^q}^2\mathbbm{1}_{\{s\le\tau_n\}}]\,ds\Bigg)^{q/2} \\
\le& C_{q,|\mathcal O|}\Theta^\frac{q}{2}\,
\(\int_0^t(t-s)^{-2\nu\frac{q}{q-2}}e^{-2\lambda(1-\ve)\frac{q}{q-2}(t-s)}ds\)^{\frac{q-2}{2}}\E\int_0^te^{-q\lam\ve(t-s)}\|\sig(u_n(s))\|_{L^q}^q\mathbbm{1}_{\{s\le\tau_n\}}\,ds\\
\le&C_{q,|\mathcal O|}\Theta^\frac{q}{2}\Gamma_2^\frac{q-2}{2}\E\int_0^te^{-q\lam\ve(t-s)}\|\sig(u_n(s))\|_{L^q}^q\mathbbm{1}_{\{s\le\tau_n\}}\,ds. \end{aligned}$$
By  the growth condition \eqref{eq4.16},
$$\begin{aligned}
&\E\int_0^te^{-q\lam\ve(t-s)}\|\sig(u_n(s))\|_{L^q}^q\mathbbm{1}_{\{s\le\tau_n\}}\,ds\\
\le&\int_0^te^{-q\lam\ve(t-s)}e^{-\rho_{qr}s}\|u_0\|_{L^{qr}}^{qr}\,ds+\int_0^te^{-q\lam\ve(t-s)}ds\\
\le&e^{-q\lam\ve t}\|u_0\| _{L^{qr}}^{qr}\int_0^te^{-(\rho_{qr}-q\lam\ve)s}\,ds+\int_0^te^{-q\lam\ve(t-s)}ds\\
\le&C_{q,r,\ve}(e^{-q\lam\ve t}\|u_0\| _{L^{qr}}^{qr}+1).\end{aligned}$$
Therefore \be\label{eq4.17}\begin{aligned} \mathbb{E}\big(\|Z_\nu^n(t)\|_{L^q}^q\big)\le&C_{q,|\mathcal O|,\nu,\lam,r,\ve}\Theta^\frac{q}{2}(e^{-q\lam\ve t}\|u_0\| _{L^{qr}}^{qr}+1)\\
\le&C_{q,|\mathcal O|,\Theta,\nu,\lam,r,\ve}\big(\|u_0\|_{L^{qr}}^{qr}+1\big).\end{aligned}\ee
It then follows that 
\begin{equation}\label{eq4.6}
\begin{split}
\mathbb{E}\big(\|I_2(t) \|_{C_b}^q\big) \le & \E\(\int_0^{t\wedge\tau_n}(t\wedge\tau_n-s)^{\nu-1}\|S(t\wedge\tau_n-s)Z_\nu^n(s)\|_{C_b}ds\)^q\\
\le&C_{q,d}\E\(\int_0^{t\wedge\tau_n}(t\wedge\tau_n-s)^{-(1+\frac{d}{2q}-\nu)\frac{q}{q-1}}e^{-\frac{\lam}{2}\frac{q}{q-1}(t\wedge\tau_n-s)}ds\)^{q-1}\int_0^t\|Z_\nu^n(s)\|_{L^q}^qds\\
\le&C_{q,d}\Gamma_3^{q-1}\int_0^t\E\big(\|Z_\nu^n(s)\|_{L^q}^q\big)ds\\
\le&C_{q,|\mathcal O|,\Theta,\nu,\lam,d,r,\ve}\big(\|u_0\|_{L^{qr}}^{qr}+1\big)t.
\end{split}
\end{equation}

Combining \eqref{eq4.10} and \eqref{eq4.6}, we deduce that for any $T>0$,
\be\label{eq4.14}\sup_{0\le t\le T}\E\big(\|u_n(t)\|_{C_b}^q\big)\le C(T+1)=:C_1,\ee
where the constant $C_1>0$ depends on the relevant parameters, but is independent of $n$.
\Vs

Now we turn to the H\"older continuity of $u_n$ in time. For any $0\le t_1\le t_2\le T$, we consider the decomposition
\begin{equation*}
\begin{split}
&\E\big(\|J_1\|_{C_b}^q\big)+\E\big(\| J_2\|_{C_b}^q\big)\\
\le&\,C_\gamma |t_2-t_1|^{q\gamma}
\E\Big(\|A_q^{\gamma}S(t_1\wedge\tau_n)u_0\|_{L^q}+\\
&\Hs\Hs\hs+\int_0^{t_1\wedge\tau_n}\|A_q^{\gamma}S[(t_1\wedge\tau_n-s)/2]\|_{\cL(C_b)}\|S[(t_1\wedge\tau_n-s)/2]f(u_n(s))\|_{C_b}\,ds\Big)^q\\
\le&\,C_\gamma |t_2-t_1|^{q\gamma}
\E\Big(\|A_q^{\gamma}u_0\|_{L^q}
+\int_0^{t_1\wedge\tau_n}\|A_q^{\gamma}S[(t_1\wedge\tau_n-s)/2]\|_{\mathcal L(L^q(\mathcal O))}\|f(u_n(s))\|_{L^q}\,ds\Big)^q\\
\le&\,C_{\gamma,q,d}\,|t_2-t_1|^{q\gamma}
\E\Big(\|A_q^{\gamma}u_0\|_{L^q}
+\int_0^{t_1\wedge\tau_n}(t_1\wedge\tau_n-s)^{-\gamma-\frac{d}{2q}}e^{-\lambda\frac{t_1\wedge\tau_n-s}{2}} \big( \| u_n(s) \|_{L^{qr}}^{r} + 1 \big)\,ds\Big)^q\\
\le&\,C_{\gamma,q,d,|\mathcal O|}\,|t_2-t_1|^{q\gamma}
\Big[\|A_q^{\gamma}u_0\|_{L^q}^q+\Gamma_4^{q-1}
         \Big(\E\int_0^t \mathbbm{1}_{\{s \le\tau_n\}} \| u_n(s) \|_{L^{qr}}^{qr} \, ds  +1\Big) \Big]\\
\le&\,C_{\gamma,q,d,|\mathcal O|,\|A^\gam u_0\|_{L^q},\|u_0\|_{L^{qr}},\lambda}\,|t_2-t_1|^{q\gamma}.
\end{split}
\end{equation*}
Similarly, for $J_3$ we have
\begin{equation*}
\begin{split}
\E\big(\|J_3\|_{C_b}^q\big) \le&\, C_{q,d}\E\(\int_{t_1\wedge\tau_n}^{t_2\wedge\tau_n}(t_2\wedge\tau_n-s)^{-\frac{d}{2q}}\|f(u_n(s))\|_{L^q}ds\)^q\\
\le&\,C_{q,d}\E\(\int_{t_1\wedge\tau_n}^{t_2\wedge\tau_n}(t_2-\tau_n-s)^{-\frac{d}{2(q-1)}}ds\)^{q-1}\int_{t_1}^{t_2}\big( \mathbbm{1}_{\{s\le\tau_n\}}\| u_n(s) \|_{L^{qr}}^{qr} + 1 \big)ds\\
\le&\, C_{\gamma,n,|\mathcal O|,\|u_0\|_{L^{qr}},\lambda}|t_2-t_1|^{q-\frac{d}{2}}.
\end{split}
\end{equation*}

For the stochastic terms, we proceed as follows. For $J_4$, using the factorization formula, we get
\begin{equation*}
\begin{split}
&\mathbb{E}\big(\|J_4(t)\|_{C_b}^q\big)\\
 \le & \E\(\int_0^{t_1\wedge\tau_n}(t_1\wedge\tau_n-s)^{\nu-1}\|\big(S(h)-I\big)S(t_1\wedge\tau_n-s)Z_\nu^n(s)\|_{C_b}ds\)^q\\
\le&C_{q,d}\E\[\(\int_0^{t_1\wedge\tau_n}(t_1\wedge\tau_n-s)^{-(1+\frac{d}{2q}-\nu+\gam)\frac{q}{q-1}}e^{-\lam\frac{(t_1\wedge\tau_n-s)q}{2{q-1}}}\)^{q-1}h^{q\gam}\int_0^{t_1}\|Z_\nu^n(s)\|_{L^q}^qds\]\\
\le&C_{q,d}\Gamma_5^{q-1}|t_1-t_2|^{q\gam}\int_0^{t_1}\E\big(\|Z_\nu^n(s)\|_{L^q}^q\big)ds\\
\le&C_{q,|\mathcal O|,\Theta,\nu,\lam,d}\big(\|u_0\|_{L^{qr}}^{qr}+1\big)t_1|t_1-t_2|^{q\gam}.
\end{split}
\end{equation*}
Likewise, for $J_5$, we obtain
\begin{equation*}
\begin{split}
\mathbb{E}\big(\|A_q^\a J_5(t) \|_{L^q}^q\big) \le
 & \E\(\int_{t_1\wedge\tau_n}^{t_2\wedge\tau_n}(t_2\wedge\tau_n-s)^{\nu-1}\|S(t_2\wedge\tau_n-s)Z_\nu^n(s)\|_{C_b}ds\)^q\\
\le&C_{q,d}\E\(\int_{t_1\wedge\tau_n}^{t_2\wedge\tau_n}(t_2\wedge\tau_n-s)^{-(1+\frac{d}{2q}-\nu)\frac{q}{q-1}}ds\)^{q-1}\int_{t_1\wedge\tau_n}^{t_2\wedge\tau_n}\|Z_\nu^n(s)\|_{L^q}^qds\\
\le&C_{q,d}|t_1-t_2|^{q\nu-1-\frac{d}{2}}\E\int_{t_1\wedge\tau_n}^{t_2\wedge\tau_n}\|Z_\nu^n(s)\|_{L^q}^qds\\
\le&C_{q,|\mathcal O|,\Theta,\nu,\lam,d}\big(\|u_0\|_{L^{qr}}^{qr}+1\big)|t_1-t_2|^{q\nu-\frac{d}{2}}.
\end{split}
\end{equation*}
\vs
Finally, collecting all the estimates for $J_1$ through $J_5$, and setting $\xi:=\min\{q\gam,q-d/2,q\nu-d/2\}$, we arrive at the H\"older-type estimate
\be\label{eq4.13}\E\big(\|u_n(t_1)-u_n(t_2)\|_{C_b}^q\big)\le C_2|t_1-t_2|^\xi,\ee
where $C_2>0$ is independent of $n$.

Since $\xi > 1$, applying Corollary \ref{c3} to \eqref{eq4.14} and \eqref{eq4.13}, we get that for any $\xi < \xi' < 2\xi - 1$ there is a constant $C_{q,\xi'} > 0$ such that 
$$
\mathbb{E}\big(\sup_{0\le t\le T}\|u_n(t)\|_{C_b}^q\big)
\le 2^{q-1}(C_1 + C_2 C_{q,\xi'} T^{\xi'})=:C,
$$
where $C = C(q,|\mathcal{O}|,\gamma, d, \lam,\alpha,T,\mathbb{E}\| u_0 \|_{C_b}^q,\mathbb{E}\| u_0 \|_{L^{qr}}^{qr},r)>0$ is  independent of $n$.
\vs
Markov's inequality yields
$$
\mathbb{P}(\tau_n < T) \le \mathbb{P}\!\Big( \sup_{0 \le t \le T} \| u_n(t) \|_{C_b}^q \ge n \Big)
                     \le \frac{\mathbb{E}\!\Big( \sup_{0 \le t \le T} \| u_n(t) \|_{C_b}^q \Big)}{n^q}
                     \le \frac{C}{n^q}.
$$
For each fixed $T > 0$, the Borel--Cantelli lemma implies that almost surely
$\tau_n \ge T$ for all sufficiently large $n$.  
Since $T$ is arbitrary, $\tau_n \to \infty$ almost surely; hence the solution is global.\end{proof}
\vs
\bc Assume \hyperref[H1*]{(H1*)} and \hyperref[H2]{(H2)} hold. Then for any $u_0 \in D(A_0^{1/2})$, the mild solution $u$ of \eqref{eq} satisfies\begin{equation*}\label{eq4.15}
\mathbb{E}\big(\|u(t)\|_{L^{qr}}^{qr}\big)
\le e^{-\rho_qt}\|u_0\|_{L^{qr}}^{qr}
+R_q,\Hs t\ge0,
\end{equation*} where $\rho_{qr}:=c_1 qr/2$.\ec
\begin{proof} Since $\tau_n\to\infty$ as $n\to\infty$, the desired result follows by passing to the limit $n\to\infty$ in \eqref{eq:Lq-bound}.\end{proof}

Having established the necessary estimates, we are now in a position to prove Theorem \ref{th:global}.
\begin{proof}[Proof of Theorem \ref{th:global}]  For $t \ge 1$, we decompose the mild solution as
\begin{equation*}
\begin{split}
u(t) =&\, S(1) u(t-1) + \int_{t-1}^t S(t-s) f(u(s)) \, ds \\
      &+ \int_{t-1}^t S(t-s) \sigma(u(s)) \, dW(s) \\
=:&\,S(1) u(t-1)+ K_1(t) + K_2(t).
\end{split}
\end{equation*}
We first know from \eqref{eq3.4}
 that $\| S(1) u(t-1) \|_{C_b} \le C_{q,d} \| u(t-1) \|_{L^q}$. By Lemma \ref{le4.2}, 
\begin{equation*}
\begin{split}
\mathbb{E}\big(\| u(t-1) \|_{L^q}^q\big)\le C_{q,d}(e^{-\rho_q(t-1)}+1), \Hs t \ge 1.
\end{split}\end{equation*}
Then \be\label{eq4.3}\E\big(\|S(1) u(t-1) \|_{C_b}^q\big) \le C_{q,d} \mathbb{E}\big(\|u(t-1)\|_{L^q}^q\big)\le C_{q,d}\big(e^{-\rho_q(t-1)}\| u_0 \|_{L^q}^q + 1 \big), \Hs t \ge 1.\ee
\vs
For the deterministic part $K_1$, similar to \eqref{eq4.2},
\begin{equation}\label{eq4.12}\begin{split}
      \int_{t-1}^{t } \big\|S(t  - s) f(u(s)) \big\|_{C_b} \, ds 
\le&\, C_{q,d}\Big( \Gamma_1^{\frac{q-1}{q}}
        \Big( \int_{t-1}^t  \| u(s) \|_{L^{qr}}^{qr} \, ds \Big)^{\frac{1}{q}} + \Gamma_2 \Big)\\
\le&\, C_{q,d,\lambda} \Big[ \Big( \int_{t-1}^t  \| u(s) \|_{L^{qr}}^{qr} \, ds \Big)^{\frac{1}{q}} + 1 \Big],\end{split}
\end{equation}
where $C_{q,d,\lambda}$ depends on $q$, $d$, $\lambda$ (via $\Gamma_1$, $\Gamma_2$ and $C_{q,d}$).
From Lemma \ref{le4.2} we have 
$$
\int_{t-1}^t \mathbb{E}\big(\| u(s) \|_{L^{qr}}^{qr} \big)\, ds
\le \,e^{-\rho_{qr}(t-1)} \| u_0 \|_{L^{qr}}^{qr} + R_{qr}, \Hs t \ge 1.
$$
Inserting it into \eqref{eq4.12} yields 
\begin{equation}\label{eq3.18}
\begin{split}
\mathbb{E}\big(\| K_1(t)\|_{C_b}^q\big)\le&\,C_{q,d}\E\Big[\Big(\int_{t-1}^t \|S(t-s)f(u(s))\|_{C_b}ds\Big)^q\Big]\\
\le&\, C_{q,d,\lambda, r} \big(e^{-\rho_{qr}(t-1)}\| u_0 \|_{L^{qr}}^{qr} + 1 \big).
\end{split}
\end{equation}

Recall the constants $\Gamma_4$ and $\Gamma_5$ depend on $q$, $d$, $\alpha$, $\lambda$. For the stochastic term $K_2(t)$, 
we have$$\begin{aligned}
K_2(t)=&\frac{\sin(\pi\nu)}{\pi}\int_0^{t}(t-s)^{\nu-1}S(t-s)Z_\nu(s)\,ds\\
&-\frac{\sin(\pi\nu)}{\pi}\int_0^{t-1}(t-1-s)^{\nu-1}S(t-1-s)Z_\nu(s)\,ds\\
=:&L_1(t)+L_2(t).\end{aligned}
$$
\eqref{eq4.17} implies that $$\mathbb{E}\big(\|Z_\nu(t)\|_{L^q}^q\big)\le C_{q,|\mathcal O|,\nu,\lam,r,\ve}\Theta^\frac{q}{2}(e^{-q\lam\ve t}\|u_0\| _{L^{qr}}^{qr}+1).$$
Suppose that $0<\epsilon<\ve$. Then
\begin{equation*}
\begin{split}
&\mathbb{E}\big(\|L_1(t) \|_{C_b}^q\big)\\
 \le & \E\Big[\Big(\int_0^t(t-s)^{\nu-1}\|S(t-s)Z_\nu(s)\|_{C_b}ds\Big)^q\Big]\\
\le&C_{q,d}\E\Big[\Big(\int_0^{t}(t-s)^{-(1+\frac{d}{2q}-\nu)}e^{-\lam\frac{(t-s)}{2}}\|Z_\nu(s)\|_{L^q}ds\Big)^q\Big]\\
=&C_{q,d}\E\Big[\Big(\int_0^{t}[(t-s)^{-(1+\frac{d}{2q}-\nu)}e^{-\lam(1-\epsilon)\frac{(t-s)}{2}}]\.[e^{-\lam\epsilon\frac{(t-s)}{2}}\|Z_\nu(s)\|_{L^q}]ds\Big)^q\Big]\\
\le&\,C_{q,d}\(\int_0^{t}\((t-s)^{-(1+\frac{d}{2q}-\nu)}e^{-\lam(1-\epsilon)\frac{(t-s)}{2}}\)^\frac{q}{q-1}ds\)^{q-1}\int_0^te^{-q\lam\epsilon\frac{(t-s)}{2}}(e^{-q\lam\ve s}\|u_0\| _{L^{qr}}^{qr}+1)ds\\
\le& C_{q,|\mathcal O|,\nu,\lam,r,\ve,\epsilon}\Theta^\frac{q}{2}\big(e^{-\frac{q\lam\epsilon t}{2}}\| u_0 \|_{L^{qr}}^{qr} + 1 \big).
\end{split}
\end{equation*}
Similarly, $$\mathbb{E}\big(\|L_2(t) \|_{C_b}^q\big) \le C_{q,|\mathcal O|,\nu,\a,r,\ve,\epsilon}\Theta^\frac{q}{2}\big( e^{-\frac{ qr\epsilon}{2}(t-1)}\| u_0 \|_{L^{qr}}^{qr} + 1 \big).$$
As a result, 
\be\label{eq4.18}\mathbb{E}\big(\|K_2(t) \|_{C_b}^q\big) \le C_{q,|\mathcal O|,\nu,\a,r,\ve,\epsilon}\Theta^\frac{q}{2}\big( e^{-\frac{qr\epsilon}{2}(t-1)}\| u_0 \|_{L^{qr}}^{qr} + 1 \big).\ee 
\vs
Combining \eqref{eq4.3}, \eqref{eq3.18} and \eqref{eq4.18},  we conclude that
\be\label{eq4.4}
\mathbb{E}\big(\| u(t) \|_{C_b}^q\big)\le C (\Theta^\frac{q}{2}+1)\big( e^{-\frac{qr\epsilon}{2}(t-1)}\| u_0 \|_{L^{qr}}^{qr} + 1 \big), \Hs t \ge 1,
\ee
where $C = C(q,|\mathcal{O}|,d,\alpha,r)> 0$. It is worth noting that, thanks to the choice of $\ve$, the parameter $\epsilon>0$ can be chosen arbitrarily, subject to
$$
0<\epsilon<\min\left\{1,\frac{c_1}{2}\right\}.
$$
The proof is complete.
\end{proof}
\subsection{Proof of Theorem \ref{th}}
For each $m\in\mathbb{N}^+$, define $$\mu_{jm}:=\left\{\ba{lll}\hs\mu_j,\Hs\hs \lam_j\le1;\\[1ex]\lam_j^{-1/m}\mu_j,\,\hs\hs\lam_j>1 \ea\right.\quad\hbox{ and }\quad W_m(t):=\sum_{j=1}^{\infty} \sqrt{\mu_{jm}} e_j B_j(t),\qquad j\in\mathbb{N}^+.$$
Then for each $j\in\mathbb{N}^+$, $\mu_{jm}\uparrow \mu_j$ as $m\to\infty$. 
Since $\lambda_j \to \infty$ as $j\to\infty$, there are only finitely many terms with $\lambda_j \le 1$. 
Consequently, $W_m(t)$ satisfies \hyperref[H1*]{(H1*)} with $\delta = 1/m$, 
and$$
\Theta_m := \sum_{j=1}^\infty \mu_{jm} \|e_j\|_{C_0}^2 \uparrow \Theta \Hs \text{as } m\to\infty.
$$
Moreover,
$$
\Theta_{mn}:=\sum_{j=1}^{\infty} |\mu_{jm}-\mu_{jn}| \| e_j \|_{C_b}^2 \to 0 \Hs \text{as } m,n\to\infty.
$$
Indeed, for each fixed $j$, $|\mu_{jm}-\mu_{jn}| \to 0$ as $m,n\to\infty$. 
Since $0 \le \mu_{jm} \le \mu_j$ for all $m$, we have $|\mu_{jm}-\mu_{jn}| \le 2\mu_j$. 
The assumption $\Theta = \sum_{j=1}^\infty \mu_j \|e_j\|_{C_b}^2 < \infty$ (cf. \hyperref[H1]{(H1)}) then allows us to apply the dominated convergence theorem to the series, yielding $\Theta_{mn} \to 0$.
\vs
Since $\Theta_m \le \Theta$, the functions $f$ and $\sigma$ retain the dissipativity and one-sided Lipschitz properties with $\Theta$ replaced by $\Theta_m$. Specifically, from \hyperref[H2]{(H2)} and \hyperref[H3]{(H3)} we obtain
\begin{equation}\label{eq4.9}
f(u)u + (q-1)\Theta_m |\sigma(u)|^2 \le -c_1 u^2 + M_{q,r},\qquad u\in\mathbb{R},
\end{equation}
and
\begin{equation}\label{eq4.22}
(f(u)-f(v))(u-v) + (q-1)\Theta_m|\sigma(u)-\sigma(v)|^2 \le N_{q,r} |u-v|^2 ,\qquad u,v \in \mathbb{R}.
\end{equation}

In view of \eqref{eq4.9}, Theorem \ref{th:global} directly implies the following result.
\begin{lemma}\label{le4.6}
Let $u^m(t)$, $t\ge0$, be the solution of \eqref{eq} with initial data  
$u_0^m \in D(A_0^{1/2})$ driven by the Wiener process $W(t):=W_m(t)$. Then for any $q\ge2$, we have
\begin{equation}\label{eq3.17'}
\mathbb{E}\big(\| u^m(t) \|_{L^q}^q\big) \le \| u_0^m \|_{L^q}^q e^{-\rho_q t} 
                                + C_q, \Hs t \ge 0,
\end{equation}
and
\begin{equation}\label{eq4.11}
\mathbb{E}\big(\| u^m(t) \|_{C_b}^q\big) \le C\big(\| u_0^m \|_{L^{qr}}^{qr} e^{-\varrho_{qr}(t-1)} + 1 \big), \Hs t \ge 1,
\end{equation}
where $\rho_q:=c_1q/2$, $\varrho_{qr}:=qr\min\{1/2,c_1/4\}$ and $C = C(q,|\mathcal{O}|,d,\alpha,r,\Theta) > 0$ is independent of $m$.
\end{lemma}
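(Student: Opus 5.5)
The plan is to derive Lemma \ref{le4.6} directly from Theorem \ref{th:global} and Lemma \ref{le4.2}. The work lies in checking that the hypotheses hold for the modified noise $W_m$, and that every constant can be tracked so that it does not depend on $m$.

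\emph{Step 1: hypotheses for $W_m$.} First I verify that the pair $(W_m,u_0^m)$ satisfies \hyperref[H1*]{(H1*)} and \hyperref[H2]{(H2)}.
\begin{itemize}
\item If $\sigma(0)=0$, then \hyperref[H1*]{(H1*)} follows from $\Theta_m\le\Theta<\infty$.
\item If $\sigma(0)\neq0$, take $\delta=1/m$. Then $\sum_j\lambda_j^{1/m}\mu_{jm}\|e_j\|_{C_b}^2=\sum_{\lambda_j\le1}\lambda_j^{1/m}\mu_j\|e_j\|_{C_b}^2+\sum_{\lambda_j>1}\mu_j\|e_j\|_{C_b}^2\le\Theta$, since $\lambda_j^{1/m}\le1$ on the first (finite) set. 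So $\Theta'_m\le\Theta$ uniformly in $m$.
\item Since $\Theta_m\le\Theta$, the dissipativity inequality \eqref{eq3.9} with $\Theta$ replaced by $\Theta_m$ holds with the same $c_1$ and $M_{q,r}$; this is \eqref{eq4.9}. The growth bound \eqref{eq4.16} does not involve the noise.
\end{itemize}
Hence Theorem \ref{th:global} applies to $u^m$ with $u_0^m\in D(A_0^{1/2})$. In particular $u^m$ is global.

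\emph{Step 2: the $L^q$ bound \eqref{eq3.17'}.} I apply Lemma \ref{le4.2} and the subsequent corollary. They give the moment bound at exponent $\vartheta=qr$; the same argument works at any exponent $q'$, provided the dissipativity condition holds with the factor $(q'-1)\Theta_m$. That is exactly what \eqref{eq4.9} supplies. Because the solution is global, $\tau_n\to\infty$. Letting $n\to\infty$ in \eqref{eq:Lq-bound}, via Fatou's lemma, gives \eqref{eq3.17'} with $\rho_q=c_1q/2$ and $C_q=2C_{M,c_1,q}/(c_1q)$. Neither constant involves $\Theta_m$ or $\delta$.

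\emph{Step 3: the $C_b$ bound \eqref{eq4.11}.} This follows from \eqref{eq4.4} in the proof of Theorem \ref{th:global}, with rate $\varrho_{qr}=qr\min\{1/2,c_1/4\}$. To get this rate I choose $\epsilon$ close to $\min\{1,c_1/2\}$, which is admissible by the remark closing that proof.

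\emph{Main obstacle: uniformity in $m$.} The $C_b$ constant in \eqref{eq4.4} is $C(\Theta_m^{q/2}+1)$, with the remaining factor depending only on $q,|\mathcal O|,d,r,\lambda,\nu,\epsilon$. Since $\Theta_m\le\Theta$, it is bounded by $C(\Theta^{q/2}+1)$. The delicate point is that the global-existence argument, and Theorem \ref{th2}, use $n$-dependent constants and constants depending on $\delta=1/m$ through $\Theta'$. I need to confirm that these enter only qualitatively, that is, only to justify the It\^o formula and to show $\tau_n\to\infty$, and never appear in the final estimates. Reviewing the chain Lemma \ref{le4.2} $\to$ \eqref{eq4.17} $\to$ \eqref{eq4.4}, the constants there depend only on $c_1$, $M_{q,r}$, $\lambda$, $q$, $d$, $r$, $|\mathcal O|$, $\nu$, $\epsilon$ and $\Theta_m$. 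This gives the claimed independence of $m$.
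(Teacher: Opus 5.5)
Your proposal is correct in substance and follows the paper's own proof. The paper obtains \eqref{eq3.17'} by rerunning the argument for \eqref{eq:Lq-bound} under \eqref{eq4.9}, and \eqref{eq4.11} from \eqref{eq4.4} with $\Theta_m\le\Theta$. Your checks of (H1*) for $W_m$, the Fatou passage $n\to\infty$, and the remark that the $n$- and $\delta$-dependent constants of Theorem~\ref{th2} enter only qualitatively make explicit what the paper leaves implicit.

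Two caveats, both shared with the paper:
\begin{enumerate}
\item \eqref{eq4.4} holds only for $\epsilon<\min\{1,c_1/2\}$. Letting $\epsilon$ approach that endpoint therefore gives every rate $qr\epsilon/2<\varrho_{qr}$, but never $\varrho_{qr}$ itself. Factorizing the stochastic convolution only over $[t-1,t]$ would instead give the faster rate $\rho_{qr}=c_1qr/2>\varrho_{qr}$ directly from the $L^{qr}$ bound.
\item \eqref{eq3.9} with $\Theta_m\le\Theta$ supplies the factor $(q'-1)\Theta_m$ needed in your Step~2 only for exponents $q'\le qr$, not for every $q'\ge2$.
\end{enumerate}
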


\begin{proof}
Thanks to \eqref{eq4.9}, estimate \eqref{eq3.17'} follows by the same argument as in \eqref{eq:Lq-bound}. 
\vs
Invoking \eqref{eq4.4} and the fact $\O_m\le\O$, we obtain
\begin{align*}
 \mathbb{E}\big(\| u^m(t) \|_{C_b}^q\big) 
&\le C (\Theta_m^{q/2} + 1) \big(\| u_0^m \|_{L^{qr}}^{qr} e^{-\varrho_{qr}(t-1)} + 1 \big) \\
&\le C (\Theta^{q/2} + 1) \big( \| u_0^m \|_{L^{qr}}^{qr} e^{-\varrho_{qr}(t-1)} + 1 \big), \Hs t \ge 1,
\end{align*}
which yields \eqref{eq4.11}.
\end{proof}

\begin{proof}[Proof of Theorem \ref{th}] We first assume $q>d+6$.

{\bf Step 1. Cauchy sequence in $L^q(\Omega;C_b(\mathcal{O}))$.} Since $D(A_b^{1/2})$ is dense in $C_0(\overline{\mathcal O})$, we can choose a sequence $\{u_0^m\}_{m\in\mathbb N^+}\subset D(A_b^{1/2})$ converging to $u_0$ in $C_0(\overline{\mathcal O})$. The goal is to show that for each $t\ge0$, $\{u^m(t)\}$ is Cauchy in $L^q(\Omega; C_b(\mathcal{O}))$.

 Consider two solutions $u^m(t), u^n(t)$, $t\ge0$, with initial data  
$u_0^m, u_0^n \in D(A_b^{1/2})$, driven by the Wiener process $W_m(t)$ and $W_n(t)$, respectively. 
Set $u^{mn}(t) := u^m(t) - u^n(t)$ and $u_0^{mn} := u_0^m-u_0^n$.  
Then $u^{mn}(t)$ satisfies  
\begin{equation*}
\begin{aligned}
u^{mn}(t)=&\, S(t) u_0^{mn} 
        + \int_0^t S(t-s) \big[ f(u^m(s)) - f(u^n(s)) \big] \, ds \\
     &\,+\int_0^t  S(t-s) \sum_{j=1}^\infty\big[ \sigma(u^m(s))\sqrt{\mu_{jm}} - \sigma(u^n(s))\sqrt{\mu_{jn}} \big]e_j \, dB_j(s)\\
     =:&\, S(t) u_0^{mn} +P_1(t)+P_2(t),\Hs t\ge0.
\end{aligned}
\end{equation*}

Applying the It\^o formula to $\|u^{mn}(t)\|_{L^{qr}}^{qr}$ introduces two additional terms due to the different noise intensities $\mu_{jm}$ and $\mu_{jn}$:
\begin{itemize}
    \item $\Theta_{mn}:=\sum_j |\mu_{jm}-\mu_{jn}|\,\|e_j\|_{C_b}^2$ (measuring the noise approximation error);
    \item $F(t):=\mathbb{E}\int_0^t\int_{\mathcal{O}} |u^{mn}(s)|^{qr-2}|\sigma(u^n(s))|^2 dxds$, which is controlled using the moment bounds for $u^n$ from Lemma \ref{le4.6}.
\end{itemize}
The first estimate is obtained via Gronwall's inequality, yielding \eqref{eq4.34}; the second follows directly from the preceding estimates, giving \eqref{eq4.7}. Convergence follows since $\Theta_{mn}\to0$ and $u_0^m\to u_0$ in $C_0(\overline{\mathcal{O}})$.

We now carry out the detailed estimates. For simplicity, denote $\vartheta:=qr$. Applying It\^o formula to $\|u^{mn}(t)\|_{L^\vartheta}^\vartheta$ and taking expectation, we obtain  
\begin{equation}\label{eq4.29}
\begin{split}
&\,\mathbb{E}\big(\|u^{mn}(t)\|_{L^\vartheta}^\vartheta\big) +\vartheta(\vartheta-1) \mathbb{E}\int_0^t \int_{\mathcal{O}} |u^{mn}(s)|^{\vartheta-2} \sum_{i,j=1}^d a_{ij}(x) \frac{\partial u^{mn}(s)}{\partial x_i} \frac{\partial u^{mn}(s)}{\partial x_j} \, dx ds\\
= &\, \|u^{mn}_0\|_{L^\vartheta}^\vartheta
+ \vartheta \mathbb{E}\int_0^t \int_{\mathcal{O}} |u^{mn}(s)|^{\vartheta-2} u^{mn}(s) \big( f(u^m(s)) - f(u^n(s)) \big) \, dx \, ds \\
&\, + \frac12 {\vartheta}({\vartheta}-1) \mathbb{E}\int_0^t\sum_{j=1}^\infty \int_{\mathcal{O}} |u^{mn}(s)|^{\vartheta-2} |G_j(s)|^2 \, dx \, ds,
\end{split}
\end{equation}
where 
$$
G_j(s) := \big[ \sigma(u^m(s))\sqrt{\mu_{jm}} - \sigma(u^n(s))\sqrt{\mu_{jn}} \big] e_j.
$$

Since 
$$
|G_j(s)|^2\le 2\big[|\sigma(u^m(s))- \sigma(u^n(s))|^2\mu_{jm}+|\sigma(u^n(s))|^2|\mu_{jm}-\mu_{jn}|\big]\|e_j\|_{C_b}^2,
$$
we have
\begin{equation}\label{eq4.30}
\begin{split}
&\,\sum_{j=1}^\infty \int_{\mathcal{O}} |u^{mn}(s)|^{\vartheta-2} |G_j(s)|^2 \, dx \\
\le&\,2\sum_{j=1}^\infty  \int_{\mathcal{O}} |u^{mn}(s)|^{\vartheta-2} |\sigma(u^m(s))- \sigma(u^n(s))|^2\mu_{jm}\|e_j\|_{C_b}^2 \, dx\\
&\,+2\sum_{j=1}^\infty  \int_{\mathcal{O}} |u^{mn}(s)|^{\vartheta-2} |\sigma(u^n(s))|^2|\mu_{jm}-\mu_{jn}|\|e_j\|_{C_b}^2  dx\\
\le&\,2\Theta_m  \int_{\mathcal{O}} |u^{mn}(s)|^{\vartheta-2} |\sigma(u^m(s))- \sigma(u^n(s))|^2dx  \\
&\,+2\Theta_{mn}\int_{\mathcal{O}} |u^{mn}(s)|^{\vartheta-2} |\sigma(u^n(s))|^2 dx,
\end{split}
\end{equation}
where $\Theta_m=\sum_{j=1}^\infty\mu_{jm}\|e_j\|_{C_b}^2$ and $\Theta_{mn}=\sum_{j=1}^\infty|\mu_{jm}-\mu_{jn}|\.\|e_j\|_{C_b}^2$.
\vs
Inserting \eqref{eq4.30} into \eqref{eq4.29} and applying \eqref{eq4.22}, we obtain
\begin{equation}\label{eq4.31}
\begin{split}
&\,\mathbb{E}\big(\|u^{mn}(t)\|_{L^\vartheta}^\vartheta\big) +\vartheta(\vartheta-1) \mathbb{E}\int_0^t \int_{\mathcal{O}} |u^{mn}(s)|^{\vartheta-2} |\nabla u^{mn}(s)|^2 \, dx \, ds\\
\le &\,\|u^{mn}_0\|_{L^\vartheta}^\vartheta
+ \vartheta \mathbb{E}\int_0^t \int_{\mathcal{O}} |u^{mn}(s)|^{\vartheta-2}
        \big[  u^{mn}(s) \big( f(u^m(s)) - f(u^n(s)) \big)+ \\
&\Hs\Hs\Hs\Hs\Hs\Hs + \Theta_m (\vartheta-1) | \sigma(u^m(s)) - \sigma(u^n(s)) |^2 \big] dx \, ds \\
&\, + \vartheta(\vartheta-1)\Theta_{mn} \mathbb{E}\int_0^t\int_{\mathcal{O}} |u^{mn}(s)|^{\vartheta-2} |\sigma(u^n(s))|^2 dx\, ds\\
\le&\,\|u^{mn}_0\|_{L^\vartheta}^\vartheta
+ \vartheta N_{\vartheta,r}\mathbb{E}\int_0^t \|u^{mn}(s)\|_{L^\vartheta}^{\vartheta}ds +\vartheta(\vartheta-1)\Theta_{mn} F(t),
\end{split}
\end{equation}
where 
\begin{align*}
F(t):=&\,\mathbb{E}\int_0^t\int_{\mathcal{O}} |u^{mn}(s)|^{\vartheta-2} |\sigma(u^n(s))|^2 dx\, ds\\
\le&\,\mathbb{E}\Big(\int_0^t\|u^{mn}(s)\|_{L^\vartheta}^\vartheta ds+\int_0^t\|\sigma(u^n(s))\|_{L^\vartheta}^\vartheta ds\Big)\\
\le&\,\mathbb{E}\int_0^t\|u^{mn}(s)\|_{L^\vartheta}^\vartheta ds+c_2^\vartheta \, 2^{\vartheta-1} \mathbb{E}\int_0^t \big( \|u^n(s)\|_{L^{\vartheta r}}^{\vartheta r} + |\mathcal{O}| \big)ds\\
\stackrel{\eqref{eq3.17'}}{\le}&\,\mathbb{E}\int_0^t\|u^{mn}(s)\|_{L^\vartheta}^\vartheta ds + C_{q,r,|\mathcal{O}|}\big( \|u_0^n\|_{L^{\vartheta r}}^{\vartheta r} +t\big).
\end{align*}

Inserting the estimate of $F(t)$ into \eqref{eq4.31} and noting $\Theta_{mn}\le 2\Theta$, we obtain 
\begin{equation}\label{eq4.37}
\begin{split}
\mathbb{E}\big(\|u^{mn}(t)\|_{L^\vartheta}^\vartheta\big)\le&\,\|u^{mn}_0\|_{L^\vartheta}^\vartheta +C_{q,r,|\mathcal{O}|}\vartheta(\vartheta-1)\big(\|u_0^n\|_{L^{\vartheta r}}^{\vartheta r} + t \big)\Theta_{mn}
\\
&\,+ \vartheta \big[ N_{\vartheta,r}+(\vartheta-1)\cdot 2\Theta \big] \,\mathbb{E}\int_0^t \|u^{mn}(s)\|_{L^\vartheta}^{\vartheta}ds\\
=:& \,\|u^{mn}_0\|_{L^\vartheta}^\vartheta+C_n(t)\,\Theta_{mn}+C'\mathbb{E}\int_0^t \|u^{mn}(s)\|_{L^\vartheta}^{\vartheta}ds,
\end{split}
\end{equation}
where 
$$
C_n(t):=C_{q,r,|\mathcal{O}|}\vartheta(\vartheta-1)\big(\|u_0^n\|_{L^{\vartheta r}}^{\vartheta r} + t \big)\quad\hbox{and} \quad
C':=\vartheta\big[ N_{\vartheta,r}+(\vartheta-1)\cdot 2\Theta \big].
$$
Applying Gronwall's inequality to \eqref{eq4.37} yields
\begin{equation}\label{eq4.34}
\begin{aligned}
\mathbb{E}\big(\|u^{mn}(t)\|_{L^{qr}}^{qr}\big)
\le&\ \|u^{mn}_0\|_{L^{qr}}^{qr} + C_n(t)\Theta_{mn} \\
&+ C' \int_0^t \big[\|u^{mn}_0\|_{L^{qr}}^{qr} + C_n(s)\Theta_{mn} \big] e^{C'(t-s)} \, ds \\
\le&e^{C't}\big(\|u^{mn}_0\|_{L^{qr}}^{qr} + C_n(t)\Theta_{mn} \big).
\end{aligned}
\end{equation}
This estimate will be useful for estimating $\mathbb{E}\|u^{mn}(t)\|_{C_b}^q$ in the following.
\vs
Using \eqref{eq3.4},
\begin{equation}\label{eq4.35}
\begin{aligned}
\mathbb{E}\big(\|P_1(t)\|_{C_b}^q\big)
&\le \mathbb{E}\Big[\Big( \int_0^t \|S(t-s) [ f(u^m(s)) - f(u^n(s)) ]\|_{C_b} \, ds \Big)^q\Big] \\
&\le C_q\mathbb{E}\Big[\Big( \int_0^t (t-s)^{-d/{2q}}e^{-\lambda(t-s)}\|f(u^m(s)) - f(u^n(s)) \|_{L^q} \, ds \Big)^q\Big] \\
&\le C_q \Gamma_2^{q-1}\, \mathbb{E}\int_0^t \| f(u^m(s)) - f(u^n(s)) \|_{L^q}^q ds,
\end{aligned}
\end{equation}
where $\Gamma_2$ is the same constant as that in Theorem \ref{th:global}, depending only on $q,d,\lambda$.
\vs
As for the stochastic term $P_2$, we write 
$$P_2(t)=\frac{\sin(\pi\nu)}{\pi}\int_0^{t}(t-s)^{\nu-1}S(t-s)\~Z_\nu(s)\,ds,$$
where 
$$\begin{aligned}\~Z_\nu(s):=&\int_0^t(t-s)^{-\nu}S(t-s)\sum_{j=1}^\infty\big[ \sigma(u^m(s))\sqrt{\mu_{jm}} - \sigma(u^n(s))\sqrt{\mu_{jn}} \big]e_j\, dB_j(s)\\
=&\int_0^t(t-s)^{-\nu}S(t-s)\big[ \sigma(u^m(s)) - \sigma(u^n(s)) \big]dW_m(s)\\
&+\int_0^t(t-s)^{-\nu}S(t-s)\sum_{j=1}^\infty\sigma(u^n(s))\big(\sqrt{\mu_{jm}} -\sqrt{\mu_{jn}}\big)e_j\, dB_j(s)\\
=:&\,\~Z_\nu^1(s)+\~Z_\nu^2(s).\end{aligned}$$
By the same arguments as those in \eqref{eq3.6}, we have 
\begin{equation*}
\begin{split}
&\,\mathbb{E}\big(\|\~Z_\nu^1(t)\|_{L^q}^q\big)\\
\le&\, C_{q,|\mathcal{O}|,\alpha} \, \mathbb{E}
   \Big[\Big( \sum_{j=1}^{\infty} \mu_{jm} \| e_j \|_{C_b}^2
          \int_0^{t} e^{-\lambda (t - s)} (t - s)^{-2\nu}
          \| \sigma(u^m(s)) - \sigma(u^n(s)) \|_{L^q}^2 ds\Big)^{q/2} \Big] \\
\le&\, C_{q,|\mathcal{O}|,\alpha,\Theta}\,
        \Gamma_2^{\frac{q-2}{2}}\,\E\int_0^t \| \sigma(u^m(s)) - \sigma(u^n(s)) \|_{L^q}^q \, ds\\
        =:&\,C_{q,|\mathcal{O}|,\alpha,\Theta,\nu,\lam}\E\int_0^t \| \sigma(u^m(s)) - \sigma(u^n(s)) \|_{L^q}^q \, ds,
\end{split}
\end{equation*}
and \begin{equation*}
\begin{split}
\mathbb{E}\big(\|\~Z_\nu^2(t)\|_{L^q}^q\big)
\le&\, C_{q,|\mathcal{O}|,\alpha} \, \Theta_{mn}\mathbb{E}
          \Big[\Big(\int_0^{t} e^{-\lambda (t - s)} (t - s)^{-2\nu}
          \|\sigma(u^n(s)) \|_{L^q}^2 ds\Big)^{q/2}\Big] \\
\le&\, C_{q,|\mathcal{O}|,\alpha}\Theta_{mn}\,
        \Gamma_2^{\frac{q-2}{2}}\,\int_0^t \E\big(\|\sigma(u^n(s)) \|_{L^q}^q\big) \, ds\\
\le &\,C_{q,|\mathcal{O}|,\alpha,\nu,\lam}\Theta_{mn}\,
\int_0^t\E\big(\|u^n(s) \|_{L^{qr}}^{qr}\big)+1 \, ds\\
\stackrel{\eqref{eq3.17'}}{\le}&C_{q,|\mathcal{O}|,\alpha,\nu,\lam,d,r,\Theta}\Theta_{mn}\,\big(\|u_0^n\|_{L^{qr}}^{qr}+t \big).
\end{split}
\end{equation*}
It follows that \be\label{eq3.19}
\begin{aligned}
\E\|P_2(t)\|_{C_b}^q
\le& \E\Big[\Big(\int_0^{t}(t-s)^{\nu-1}
\|S(t-s)\~Z_\nu(s)\|_{L^q}\,ds\Big)^q\Big] \\
\le&C_{q,d}\E\[\(\int_0^{t\wedge\tau_n}(t\wedge\tau_n-s)^{-(1+\frac{d}{2q}-\nu)\frac{q}{q-1}}e^{-\frac{\lam}{2}\frac{q}{q-1}(t\wedge\tau_n-s)}ds\)^{q-1}\int_0^t\|\~Z_\nu(s)\|_{L^q}^qds\]\\
\le&C_{q,d}\Gamma_3^{q-1}\int_0^t\E\big(\|\~Z_\nu(s)\|_{L^q}^q\big)ds\\
\le& C_{q,|\mathcal{O}|,\alpha,\Theta,\nu,\lam,d,\lam}\Big(\E\int_0^t \| \sigma(u^m(s)) - \sigma(u^n(s)) \|_{L^q}^q \, ds+\Theta_{mn}\,
        \big(\|u_0^n\|_{L^{qr}}^{qr}+t \big)\Big),
\end{aligned}
\ee
where $0<\epsilon<1$ is any given number.

By the condition \eqref{eq3.5} in \hyperref[H3]{(H3)} and H\"older's inequality, we first have the estimates
\begin{equation*}
\begin{split}
&\,\| f(u^m(t)) - f(u^n(t)) \|_{L^q}\vee\| \sigma(u^m(t)) - \sigma(u^n(t)) \|_{L^q}\\
\le&\, c_3\Big( \int_\mathcal{O} \big(1+|u^m(t)|^{r-1}+|u^n(t)|^{r-1}\big)^q |u^{mn}(t)|^q \,dx \Big)^{1/q} \\
\le&\,  C_{q,r,|\mathcal{O}|}\big( 1+\|u^m(t)\|_{L^{qr}}^{r-1}+\|u^n(t)\|_{L^{qr}}^{r-1} \big) \cdot \|u^{mn}(t)\|_{L^{qr}},\Hs t\ge0,
\end{split}
\end{equation*}
and using H\"older's inequality again,
\begin{equation}\label{eq4.18'}
\begin{split}
&\, \mathbb{E}\int_0^t \| f(u^m(s)) - f(u^n(s)) \|_{L^q}^q \, ds
   \vee \mathbb{E}\int_0^t \| \sigma(u^m(s)) - \sigma(u^n(s)) \|_{L^q}^q \, ds \\
\le&\, C_{q,r,|\mathcal{O}|}\mathbb{E}\int_0^t \big( 1 + \|u^m(s)\|_{L^{qr}}^{r-1} + \|u^n(s)\|_{L^{qr}}^{r-1} \big)^{q} \,
   \|u^{mn}(s)\|_{L^{qr}}^q \, ds \\
\le&\, C_{q,r,|\mathcal{O}|}\left( \mathbb{E}\int_0^t \big( 1 + \|u^m(s)\|_{L^{qr}}^{qr} + \|u^n(s)\|_{L^{qr}}^{qr} \big) \, ds \right)^{(r-1)/r}
      \left( \mathbb{E}\int_0^t \|u^{mn}(s)\|_{L^{qr}}^{qr} \, ds \right)^{1/r}\\
\stackrel{\eqref{eq3.17'}}{\le}&\, C_{mn}(t)\bigg( \mathbb{E}\int_0^t \|u^{mn}(s)\|_{L^{qr}}^{qr} \, ds \bigg)^{1/r},
\end{split}
\end{equation}
where 
$$
C_{mn}(t):= C_{q,r,|\mathcal{O}|} \left[ t+ \|u_0^m\|_{L^{qr}}^{qr}+\|u_0^n\|_{L^{qr}}^{qr} \right]^{(r-1)/r}>0.
$$
Inserting \eqref{eq4.18'} into \eqref{eq4.35} and \eqref{eq3.19}, respectively,  we obtain 
\begin{equation}\label{eq4.7}
\begin{split}
\mathbb{E} \big(\| u^{mn}(t) \|_{C_b}^q\big) 
\le&\, 3^{q-1} \|u_0^{mn}\|_{C_b}^q
   + C\Big[C_{mn}(t)\bigg( \mathbb{E}\int_0^t \|u^{mn}(s)\|_{L^{qr}}^{qr} \, ds \bigg)^{1/r}+\Theta_{mn}\,
        \big(\|u_0^n\|_{L^{qr}}^{qr}+t \big)\Big]\\
\stackrel{\eqref{eq4.34}}{\le}&\, 3^{q-1} \|u_0^{mn}\|_{C_b}^q+ C \Big[C_{mn}(t)\big(t e^{C't}\big)^{1/r}\big(  \|u^{mn}_0\|_{L^{qr}}^{qr}+ C_n(t)\Theta_{mn} \big)^{1/r} +\\
&\Hs\Hs\Hs\hs+\Theta_{mn}\,
        \big(\|u_0^n\|_{L^{qr}}^{qr}+t \big)\Big],\Hs t\ge0,
\end{split}
\end{equation}
where $C = C(q,d,\lambda,|\mathcal{O}|,\alpha,\Theta) > 0$. 
Note that $\lim_{n\to\infty}C_n(t)$ and $\lim_{m,n\to\infty}C_{mn}(t)$ exist, since $u_0^n \to u_0$ in $C_0(\overline{\mathcal{O}})$. 

Recall that $$\lim_{m,n\to\infty}\Theta_{mn}=\lim_{m,n\to\infty}\|u_0^{mn}\|_{L^{qr}}^{qr}=0,$$ and that $$\lim_{n\to\infty}C_n(t)\quad\hbox{and}\quad\lim_{m,n\to\infty}C_{mn}(t)\hs\hbox{exist}.$$ These together with \eqref{eq4.34} and \eqref{eq4.7} implies that $u_m(t)$ is a Cauchy sequence in $L^{qr}(\Omega; L^{qr}(\mathcal{O}))\cap L^q(\Omega; C_0(\overline{\mathcal{O}}))$.
\Vs

{\bf Step 2. Convergence to a solution with original data and noise.} 
Since for any $t\ge0$, $\{u^m(t)\}$ is a Cauchy sequence in $L^{qr}(\Omega; L^{qr}(\mathcal{O}))\cap L^q(\Omega; C_0(\overline{\mathcal{O}}))$. Consequently, there exists a limit process $u(t) \in L^{qr}(\Omega; L^{qr}(\mathcal{O})) \cap L^q(\Omega; C_0(\overline{\mathcal{O}}))$ such that $u^m(t) \to u(t)$.

Passing to the limit $m \to \infty$ and recalling that $\mu_{jm} \to \mu_j$ as $m \to \infty$, we conclude that $u(t)$ is the unique mild solution corresponding to the initial data $u_0\in C_0(\overline{\cO})$ and the Wiener process $W(t)$ satisfying \hyperref[H1]{(H1)}. This establishes the global existence in time of the solution.
\vs
On the other hand, thanks to Lemma \ref{le4.6}, for each $m$, we have
$$
\mathbb{E}\big(\| u^m(t) \|_{C_b}^q\big) \le C \big( \| u_0^m \|_{L^{qr}}^{qr} e^{-\varrho_{qr}(t-1)} + 1 \big), \Hs t \ge 1,
$$
where $\varrho_{qr} := qr\min\{1/2, c_1/4\} > 0$ and $C = C(q,d,|\mathcal{O}|,\Theta,\alpha,r) > 0$. Since both constants are independent of $m$, letting $m \to \infty$, we obtain
$$
\mathbb{E}\big(\| u(t) \|_{C_b}^q\big) \le C \big( \| u_0 \|_{L^{qr}}^{qr} e^{-\varrho_{qr}(t-1)} + 1 \big), \Hs t \ge 1,
$$
where the constant $C>0$  depends only on $q$, $d$, $|\mathcal{O}|$, $\Theta$ and $r$. 
This completes the proof of Theorem \ref{th} in the case of $q>d+6$.
\vs 
Now Suppose that $q\le d+6$. Since  $$\mathbb{E}\big(\| u(t) \|_{C_b}^{d+7}\big) \le C \big(\| u_0 \|_{C_b}^{(d+7)r} e^{-\varrho_{(d+7)r}(t-1)} + 1 \big), \Hs t \ge 1,$$
it follows that
$$\begin{aligned}\mathbb{E}\big(\| u(t) \|_{C_b}^q\big) \le& \[\mathbb{E}\big(\| u(t) \|_{C_b}^{d+7}\big)\]^{\frac{q}{d+7}}\\
\le&C\big( \| u_0 \|_{C_b}^{(d+7)r} e^{-\varrho_{(d+7)r}(t-1)} + 1 \big)^{\frac{q}{d+7}}\\
\le&C\big( \| u_0 \|_{C_b}^{qr} e^{-\varrho_{qr}(t-1)} + 1 \big), \Hs t \ge 1.\end{aligned}$$
The proof is complete.
\end{proof}

\appendix\section*{Appendices}
\addcontentsline{toc}{section}{Appendices} 
\refstepcounter{section}

\subsection{Sectorial operators and analytic semigroups}\label{App2}
Here we recall several definitions and foundational results concerning sectorial operators and analytic semigroups, following the treatment in A. Lunardi \cite{Lu}.

Let $X$ be a Banach space with norm $\|\.\|$, and let $A:D(A)\subset X\ra X$ be a closed linear operator, where the domain $D(A)$ is not necessarily dense in $X$.
\bd\label{d:4.1} The operator $A$ is said to be sectorial if there exist constants $\omega\in\mathbb{R}$, $\theta\in(0,\pi/2)$, and $M>0$ such that
\begin{enumerate}
    \item[(i)] The resolvent set satisfies $\rho(A) \supset S_{\theta, \omega}$, where
        $$
        S_{\theta, \omega} = \{ \lambda \in \mathbb{C} :\0\le|\arg(\lambda - \omega)|\le\pi,\,\, \lambda \neq \omega\},
        $$
    \item[(ii)] The resolvent estimate holds:
        $$
        \|R(\lambda, A)\|_{\mathcal{L}(X)} \leq \frac{M}{|\lambda - \omega|} \Hs \text{for all } \lambda \in S_{\theta, \omega}.
        $$
\end{enumerate} 

If $\w>0$, for every $ \alpha > 0 $, the negative fractional power of $A$ is defined by 
$$A^{-\alpha} := \frac{1}{\Gamma(\alpha)} \int_{0}^{+\infty} t^{\alpha-1} e^{-At} dt,$$
and the positive fractional power $A^\alpha$ is defined as the inverse of $A^{-\alpha}$.
\ed
\br This definition of a sectorial operator differs from the standard one found in the literature (see e.g. Henry \cite{D.H} etc.) in that it does not require $A$ to be densely defined.\er

According to \cite[Chap. 2]{Lu}, if $A$ is a sectorial operator in $X$, then it generates an analytic semigroup $\{e^{-At}\}_{t\geq 0}$ on $X$. Moreover, the {\sl strong continuity property} $$\|e^{-A  t}x-x\|\ra0 \hs \mb{as }\,t\ra0^+,\Hs \A\,x\in X$$ holds if and only if $A$ is densely defined, i.e., $\overline{D(A)}=X.$

\begin{proposition}\label{p2}
Let $-A$ be the generator of an analytic semigroup. Then the following statements hold.
\begin{itemize}
    \item The semigroup property holds:
    $$
    e^{-A t}e^{-A s} = e^{-A(t+s)}, \Hs t, s \geq 0.
    $$
    \item There is a constant $C>0$ such that 
    $$
    \|e^{-At}\|_{\mathcal{L}(X)}\le C e^{\omega t},\Hs t\ge0.
    $$
    \item For each $t > 0$, $x \in X$, we have $e^{-At}x \in D(A)$, and 
    $$
    A e^{-At}=e^{-At}A\Hs \text{on } D(A),\Hs t\ge0.
    $$
    \item The derivative is given by
    $$
    \frac{d}{dt} e^{-At} = -A e^{-At}, \Hs t > 0.
    $$
    \item For every $\varepsilon > 0$, there exists a constant $C=C_{\varepsilon} > 0$ such that
    $$
    \|A e^{-A t}\|_{L(X)} \leq C t^{-1}e^{(\omega + \varepsilon)t}, \Hs t > 0,
    $$
    where $\omega$ is the constant from the sectorial condition.

    \item $A^{\alpha+\beta}=A^\alpha A^\beta=A^\beta A^\alpha$ on $D(A^{\alpha+\beta})$.
   \item The domains satisfy
    $$
    D(A^\beta)\subset D(A^\alpha),\Hs \beta>\alpha>0.
    $$
\end{itemize}
\end{proposition}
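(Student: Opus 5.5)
The plan is to work directly from the Dunford integral representation of the semigroup generated by $-A$, following Lunardi \cite[Chap.~2]{Lu}. Since $A$ is sectorial in the sense of Definition~\ref{d:4.1}, we define for $t>0$
$$
e^{-At}:=\frac{1}{2\pi i}\int_{\gamma_{r,\eta}} e^{\lambda t}R(\lambda,-A)\,d\lambda ,
$$
where $\gamma_{r,\eta}$ is the boundary of the sector $\{\lambda:|\arg(\lambda-\omega)|\le\eta\}\cup\{|\lambda-\omega|\le r\}$, oriented counterclockwise, with $\pi/2<\eta<\pi-\theta$. We set $e^{-A0}=I$. The resolvent bound $\|R(\lambda,A)\|\le M/|\lambda-\omega|$ together with the exponential decay of $e^{\lambda t}$ on the rays makes the integral absolutely convergent for $t>0$. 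By Cauchy's theorem the integral is independent of $r$ and $\eta$.

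\textbf{Step 1: derivative and domain.} First I differentiate under the integral sign. The factor $\lambda e^{\lambda t}$ is still integrable on the rays, so $t\mapsto e^{-At}$ is analytic on $(0,\infty)$. Writing $\lambda R(\lambda,-A)=I-AR(\lambda,-A)$ and using that $A$ is closed shows three things at once: $e^{-At}x\in D(A)$ for every $x\in X$, $\frac{d}{dt}e^{-At}=-Ae^{-At}$, and $Ae^{-At}=e^{-At}A$ on $D(A)$. The last identity holds because $A$ commutes with the resolvent.

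\textbf{Step 2: semigroup law.} The semigroup property comes from taking two contours, one strictly inside the other. We multiply the two integrals, apply the resolvent identity $R(\lambda)R(\mu)=\frac{R(\lambda)-R(\mu)}{\mu-\lambda}$, and evaluate the inner scalar integrals by Cauchy's formula. One of them gives $e^{\lambda s}$ and the other vanishes.

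\textbf{Step 3: quantitative bounds.} I first treat $\omega=0$ and then reduce the general case by the shift $A\mapsto A+\omega$, which multiplies the semigroup by $e^{\omega t}$. To estimate $\|e^{-At}\|$ and $\|Ae^{-At}\|$, I substitute $\lambda=\xi/t$ and take the contour radius $r=1/t$. This gives $\|e^{-At}\|\le C$ and $\|Ae^{-At}\|\le Ct^{-1}$ for $0<t\le1$. For large $t$, I shift the vertex of the sector to $\omega+\varepsilon$, which produces the factor $e^{(\omega+\varepsilon)t}$ in the bound for $Ae^{-At}$. Combining these with the semigroup law yields $\|e^{-At}\|\le Ce^{\omega t}$ for all $t\ge0$.

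\textbf{Step 4: fractional powers.} These statements need $\omega<0$ in the sign convention for $A$, so that the integral defining $A^{-\alpha}$ converges by Step 3. The identity $A^{-\alpha}A^{-\beta}=A^{-(\alpha+\beta)}$ follows from Fubini and the Beta-function identity
$$
\int_0^t s^{\alpha-1}(t-s)^{\beta-1}\,ds=B(\alpha,\beta)\,t^{\alpha+\beta-1},
$$
combined with the semigroup law. Each $A^{-\alpha}$ is injective: if $A^{-\alpha}x=0$, apply $A^{-(1-\alpha)}$ (or $A^{-(n-\alpha)}$ with $n>\alpha$ an integer) to get $A^{-n}x=0$, and $A^{-n}=(A^{-1})^n$ is injective. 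Inverting the identity gives $A^{\alpha+\beta}=A^\alpha A^\beta=A^\beta A^\alpha$ on $D(A^{\alpha+\beta})$. The domain inclusion follows because $D(A^\beta)=\mathrm{Ran}(A^{-\beta})=\mathrm{Ran}(A^{-\alpha}A^{-(\beta-\alpha)})\subset\mathrm{Ran}(A^{-\alpha})=D(A^\alpha)$.

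\textbf{Main obstacle.} I expect the hardest part to be Step 3: making the contour deformation uniform so that the constant $C_\varepsilon$ in the $Ae^{-At}$ bound depends only on $M$, $\theta$ and $\varepsilon$. A second delicate point is that $A$ need not be densely defined. This is why every identity must be derived from the resolvent rather than from strong continuity at $t=0$, which is not available here.
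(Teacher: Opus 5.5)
The paper gives no proof of Proposition~\ref{p2} of its own. It defers to Lunardi's Chapter~2 and Henry's Section~1.4, and your Dunford-integral argument is that standard proof. Steps~1 and~2 are fine, and Step~4 is fine up to one point noted below.

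\textbf{The gap is at the end of Step~3.} From $\|e^{-At}\|\le C$ on $(0,1]$, the semigroup law only gives $\|e^{-At}\|\le C^{1+t}$. Moving the vertex to $\omega+\varepsilon$ only gives $C_\varepsilon e^{(\omega+\varepsilon)t}$. Neither yields the loss-free bound $\|e^{-At}\|\le Ce^{\omega t}$ in the second bullet.

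The repair is already in your computation. When $\omega=0$, the substitution $\lambda=\xi/t$ with $r=1/t$ turns the contour into the fixed curve $\gamma_{1,\eta}$, and $\|R(\xi/t,-A)\|\le Mt/|\xi|$. Hence $\|e^{-At}\|\le M_0:=\frac{M}{2\pi}\int_{\gamma_{1,\eta}}|e^{\xi}|\,|\xi|^{-1}|d\xi|$. Using $AR(\lambda,-A)=I-\lambda R(\lambda,-A)$ and $\int_{\gamma}e^{\lambda t}\,d\lambda=0$, also $t\|Ae^{-At}\|\le M_1:=\frac{M}{2\pi}\int_{\gamma_{1,\eta}}|e^{\xi}|\,|d\xi|$. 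Both hold for \emph{every} $t>0$, not just $t\le1$.

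Undoing the shift gives $\|e^{-At}\|\le M_0e^{\omega t}$ exactly, and $\|Ae^{-At}\|\le e^{\omega t}(M_1t^{-1}+|\omega|M_0)$. The $\varepsilon$ in the fifth bullet only absorbs the factor $1+|\omega|t$ coming from $A=(A+\omega)-\omega$. So no vertex shift or semigroup iteration is needed, and your ``main obstacle'' disappears. Note, though, that $C_\varepsilon$ must also depend on $|\omega|$, as $-A=\omega I$ shows.

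\textbf{Sign convention.} Definition~\ref{d:4.1} uses Henry's normalization: $\sigma(A)$ lies in a right-opening sector with vertex $\omega$. Then $R(\lambda,-A)=-R(-\lambda,A)$ is controlled on $\{|\arg(\lambda+\omega)|\le\pi-\theta\}$. Your contour should therefore have vertex $-\omega$; your range $\pi/2<\eta<\pi-\theta$ is correct. The bounds then carry $e^{-\omega t}$, and $A^{-\alpha}$ needs $\omega>0$, matching the paper's definition.

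The $e^{\omega t}$ in the statement is Lunardi's normalization, where the sector condition is imposed on the generator $-A$ at vertex $\omega$. Under Definition~\ref{d:4.1} read literally, the second bullet fails, e.g.\ for $A=\omega I$ with $\omega<0$. Your remark that Step~4 needs $\omega<0$ shows you are working in Lunardi's normalization. Say so explicitly, rather than quoting $\|R(\lambda,A)\|\le M/|\lambda-\omega|$ to control $R(\lambda,-A)$ around $\omega$.

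\textbf{The point in Step~4.} Injectivity of $A^{-n}=(A^{-1})^n$ presupposes that $\int_0^\infty e^{-At}\,dt$ is the genuine inverse of $A$. In the non-dense setting this needs an argument:
\begin{itemize}
\item For $x\in X$, closedness gives $A\int_\epsilon^R e^{-At}x\,dt=e^{-A\epsilon}x-e^{-AR}x$.
\item Hence $\int_\epsilon^R e^{-At}x\,dt=e^{-A\epsilon}A^{-1}x-e^{-AR}A^{-1}x$.
\item $A^{-1}x\in D(A)$, and $e^{-A\epsilon}y\to y$ does hold for $y\in D(A)$. This follows from the Dunford integral and $R(\lambda,-A)y=\lambda^{-1}y-\lambda^{-1}R(\lambda,-A)Ay$.
\item Letting $\epsilon\to0$ and $R\to\infty$ gives the identification.
\end{itemize}
So strong continuity at $0$ is available on $D(A)$, and that is all you need.
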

\Vs
In the next result, we need the fractional Sobolev space. Recall that for $s \in (0,1)$, the fractional Sobolev space $W^{s,q}(\mathcal O)$ is defined by
$$
W^{s,q}(\mathcal O)
:= \left\{
u \in L^q(\mathcal O) :
[u]_{W^{s,q}} < \infty
\right\},
$$
equipped with the norm
$$
\|u\|_{W^{s,q}}
:= \|u\|_{L^q} + [u]_{W^{s,q}},
$$
where the Gagliardo seminorm is given by
$$
[u]_{W^{s,q}}
:= \left(
\int_{\mathcal O}\int_{\mathcal O}
\frac{|u(x) - u(y)|^q}{|x - y|^{N + sq}}
\,dx\,dy
\right)^{1/q}.
$$
\begin{proposition}
\label{prop:sigma-W2aq}
Let $\mathcal{O} \subset \mathbb{R}^d$, $d\ge1$ be a bounded domain with sufficiently smooth boundary,
$0 < \alpha < 1/(2d)$, and $1 < q < \infty$.
Let $A_q$ denote the Dirichlet Laplacian on $L^q(\mathcal{O})$.
Suppose $g \in C^1(\mathbb{R})$ with $g(0) = 0$, and assume that $u \in D(A_q^\alpha)$
satisfies $\|u\|_{L^\infty(\mathcal{O})} \le M$ for some $M > 0$.
Then $g(u) \in D(A_q^{\alpha'})$ for every $0 < \alpha' < \alpha$, and there exists a constant
$C > 0$, depending on $N$, $q$, $\alpha$, $\alpha'$, and $|\mathcal{O}|$, such that
\begin{equation}
\label{eq:main-estimate}
\|A_q^{\alpha'} g(u)\|_{L^q(\mathcal{O})}
\le C\,L_M\,\|A_q^\alpha u\|_{L^q(\mathcal{O})},
\end{equation}
where $L_M := \sup_{|t| \le M} |g'(t)|$.
\end{proposition}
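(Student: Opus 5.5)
The plan is to avoid the spectral calculus of $A_q$ and work instead with the identification of fractional domains with Sobolev spaces. For a smooth bounded domain, complex interpolation gives $D(A_q^{\theta})=[L^q,D(A_q)]_\theta$. For $0<2\theta<1/q$ this space coincides with the Bessel potential space $H^{2\theta,q}(\mathcal O)$ with no boundary condition, and the norms are equivalent. The Sobolev--Slobodeckij spaces $W^{s,q}$ sit between Bessel potential spaces: for $s'<s$ and any $\epsilon>0$,
$$
H^{s,q}\hookrightarrow W^{s-\epsilon,q}
\quad\text{and}\quad
W^{s',q}\hookrightarrow H^{s'-\epsilon,q}.
$$
The restriction $\alpha<1/(2d)$ (together with $q$ large, or after adjusting the exponent) is meant to keep $2\alpha$ below the boundary-trace threshold. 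In that range the Dirichlet condition is invisible, and $D(A_q^\alpha)$ is just a fractional Sobolev space. If the threshold $2\alpha<1/q$ is not met, I would first use this range and then interpolate, so this is the point to check carefully.

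With that identification the proof runs through the Gagliardo seminorm.

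\textbf{Step 1.} Choose $s$ with $2\alpha'<s<2\alpha$. From $u\in D(A_q^\alpha)\cong H^{2\alpha,q}\hookrightarrow W^{s,q}$ we get $[u]_{W^{s,q}}\le C\|A_q^\alpha u\|_{L^q}$. The $L^q$ part is also controlled, because $\|u\|_{L^q}\le C\|A_q^\alpha u\|_{L^q}$ (since $0\in\rho(A_q)$).

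\textbf{Step 2.} Since $|u|\le M$, the mean value theorem gives the pointwise bounds
$$
|g(u(x))-g(u(y))|\le L_M|u(x)-u(y)|
\quad\text{and}\quad
|g(u)|\le L_M|u|,
$$
the second using $g(0)=0$. Hence $\|g(u)\|_{W^{s,q}}\le L_M\|u\|_{W^{s,q}}$. This is the only place the nonlinearity enters, and it explains why the estimate is linear in $L_M$.

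\textbf{Step 3.} Use $W^{s,q}\hookrightarrow H^{2\alpha',q}\cong D(A_q^{\alpha'})$, again in the boundary-free range, to conclude
$$
\|A_q^{\alpha'}g(u)\|_{L^q}\le C\,\|g(u)\|_{W^{s,q}}.
$$
Chaining Steps 1--3 gives the estimate in the statement, and $g(u)\in D(A_q^{\alpha'})$ follows.

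The main obstacle is the functional-analytic identification $D(A_q^\theta)=H^{2\theta,q}$, equivalently $D(A_q^\theta)=[L^q,W^{2,q}\cap W^{1,q}_0]_\theta$. This rests on the bounded imaginary powers of the Dirichlet Laplacian (Seeley's theorem), together with the fact that Dirichlet traces impose no constraint at small smoothness. I would cite this rather than prove it.

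The loss $\alpha'<\alpha$ appears twice: once in Step 1 and once in Step 3. It comes from passing between Bessel potential and Slobodeckij scales, which agree only up to an $\epsilon$ unless $q=2$. The boundary-free range is used in both steps, so the threshold issue noted above applies to each of them. The dependence of $C$ on $q,\alpha,\alpha',|\mathcal O|$ comes from these embedding and interpolation constants.
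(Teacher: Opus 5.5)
Your three steps are the paper's proof. The paper passes from $D(A_q^\alpha)$ to a fractional Sobolev space by interpolation. It then bounds $\|g(u)\|_{L^q}$ and the Gagliardo seminorm of $g(u)$ by $L_M$ times those of $u$ via the mean value theorem. Finally it returns to $D(A_q^{\alpha'})$ through an embedding $W^{2\alpha,q}(\mathcal O)\hookrightarrow D(A_q^{\alpha'})$, which it simply cites from Quittner--Souplet. Your $\epsilon$-losses through $H^{2\theta,q}$ are more careful than the paper's lossless $D(A_q^\alpha)\hookrightarrow W^{2\alpha,q}$, which it attributes to real interpolation. In fact $D(A_q^\alpha)$ is the complex interpolation space, and $H^{2\alpha,q}\hookrightarrow W^{2\alpha,q}$ requires $q\ge 2$.

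However, the point you left open is a genuine gap, and the paper shares it rather than resolving it. The hypothesis $\alpha<1/(2d)$ does not give $2\alpha'<1/q$: take $d=1$, $q=10$, $\alpha'=1/4<\alpha=2/5$. It is $q\le d$, not large $q$, that would put you in the boundary-free range. For $1/q<2\theta<1$, Seeley's theorem gives $D(A_q^\theta)=\{v\in H^{2\theta,q}(\mathcal O):\ v|_{\partial\mathcal O}=0\}$. So the embedding $W^{s,q}\hookrightarrow D(A_q^{\alpha'})$ in your Step~3 is false once $2\alpha'>1/q$; nonzero constants are counterexamples. Interpolating from the boundary-free range cannot repair this. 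The target space carries a boundary condition, which must come from an endpoint such as $W^{1,q}_0$, i.e.\ from exactly the information that is missing. Nor is this regime academic: in Lemma~\ref{le2.6} the parameters satisfy $\alpha'>\nu>1/q$, although the text there writes $\alpha<1/(2q)$.

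What is missing is the fact that $g(u)$ inherits the Dirichlet condition from $u$. This is a second use of $g(0)=0$, beyond the pointwise bound $|g(u)|\le L_M|u|$. One way to do it when $2\alpha'>1/q$:
\begin{enumerate}
\item Pick $s\in(2\alpha',2\alpha)$. Then $s\in(1/q,1)$, and $u$ lies in the trace-zero subspace of $W^{s,q}$.
\item The fractional Hardy inequality yields $\|u/\delta^s\|_{L^q}\le C\|u\|_{W^{s,q}}$, where $\delta=\mathrm{dist}(\cdot,\partial\mathcal O)$. Hence $\|g(u)/\delta^s\|_{L^q}\le CL_M\|u\|_{W^{s,q}}$.
\item For $1/q<s<1$, the space $\{v\in W^{s,q}:\ v/\delta^s\in L^q\}$ is exactly the trace-zero subspace of $W^{s,q}$ (Grisvard). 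So $g(u)$ lands in the trace-zero subspace of $H^{2\alpha',q}$, i.e.\ in $D(A_q^{\alpha'})$, with the desired bound.
\end{enumerate}
The borderline case $2\alpha'=1/q$ is handled by proving the estimate for some $\alpha''\in(\alpha',\alpha)$ and using $D(A_q^{\alpha''})\hookrightarrow D(A_q^{\alpha'})$. In the paper's application $u=u_n(s)\in C_0(\overline{\mathcal O})$, so the vanishing trace of $g(u)$ is immediate. Either way, the constant also depends on $\mathcal O$ through the trace and Hardy constants, not merely on $|\mathcal O|$.
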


\begin{proof}
Since $D(A_q)\subset W^{2,q}(\mathcal O)$, the monotonicity of real interpolation yields
$$
D(A_q^\alpha)
= \bigl[L^q(\mathcal O),\, D(A_q)\bigr]_\alpha
\hookrightarrow
\bigl[L^q(\mathcal O),\, W^{2,q}(\mathcal O)\bigr]_\alpha
= W^{2\alpha,q}(\mathcal O),
$$
where $[\cdot,\cdot]_\alpha$ denotes the real interpolation functor. 
Thus, $u \in D(A_q^\alpha)$ implies $u \in W^{2\alpha,q}(\mathcal O)$.

Since $g(0) = 0$ and $\|u\|_{L^\infty} \le M$, the mean value theorem gives
$$
|g(u(x))| = |g(u(x)) - g(0)|
\le \sup_{|t| \le M} |g'(t)|\,|u(x)|
= L_M\,|u(x)|.
$$
Taking the $L^q$ norm yields
\begin{equation}
\label{eq:Lq-estimate}
\|g(u)\|_{L^q}
\le L_M\,\|u\|_{L^q}.
\end{equation}

For any $x, y \in \mathcal O$, the mean value theorem gives
$$
|g(u(x)) - g(u(y))|
\le L_M\,|u(x) - u(y)|.
$$
Raising both sides to the $q$-th power and dividing by $|x - y|^{N + 2\alpha q}$, we obtain
$$
\frac{|g(u(x)) - g(u(y))|^q}{|x - y|^{N + 2\alpha q}}
\le L_M^q\,
\frac{|u(x) - u(y)|^q}{|x - y|^{N + 2\alpha q}}.
$$
Integrating over $\mathcal O \times \mathcal O$ and taking the $1/q$ power gives
\begin{equation}
\label{eq:seminorm-estimate}
[g(u)]_{W^{2\alpha, q}}
\le L_M\,[u]_{W^{2\alpha, q}}.
\end{equation}

Combining \eqref{eq:Lq-estimate} and \eqref{eq:seminorm-estimate}, we arrive at
$$
\|g(u)\|_{W^{2\alpha, q}}
\le L_M\,\|u\|_{W^{2\alpha, q}} < \infty.
$$
Finally, by \cite[Appendix~E, \S51.1]{QS}, for any $0 < \alpha' < \alpha < \frac{1}{2d}$, we have the continuous embedding
$$
W^{2\alpha, q}(\mathcal O) \hookrightarrow D(A_q^{\alpha'}).
$$
Therefore, $g(u) \in D(A_q^{\alpha'})$, and there exists a constant $C > 0$ such that
$$
\|A_q^{\alpha'} g(u)\|_{L^q}
\le C\,\|g(u)\|_{W^{2\alpha, q}}
\le CL_M\,\|u\|_{W^{2\alpha, q}}
\le CL_M\,\|A_q^\alpha u\|_{L^q},
$$
where the last inequality follows from the continuous embedding $D(A_q^\alpha) \hookrightarrow W^{2\alpha,q}(\mathcal O)$.
This completes the proof.
\end{proof}
\subsection{Quantitative Kolmogorov continuity estimate}

The following  Kolmogorov continuity theorem is taken from H. Kunita \cite{Kunita} (see also \cite{Stroock}).

\begin{proposition}\label{p1}
Let $\{v_t\}_{t\in[0,T]}$ be a stochastic process taking values in a Banach space. Assume there exist constants $q,\xi>1$ and $C>0$ such that
$$
\mathbb{E}\big(\|v_t-v_s\|^q\big) \le C|t-s|^\xi,\qquad 0\le s,t\le T.
$$
For any $\eta$ satisfying $0<\eta<(\xi-1)/q$, define
$$
K(\omega):=\sup_{0\le s<t\le T}\frac{\|v_t-v_s\|}{|t-s|^\eta}.
$$
Then $K(\omega)$ is almost surely finite and there exists a constant $B>0$, depending only on $C,q,\xi,T,\eta$, such that
$$
\mathbb{E}(K^q) \le B.
$$
\end{proposition}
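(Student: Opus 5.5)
The plan is a dyadic chaining argument. First reduce to dyadic times, then bound the resulting Hölder quotient by a series whose $L^q(\Omega)$-norm can be summed. By rescaling time ($t\mapsto t/T$) I may work on $[0,T]$ with dyadic grids $D_k:=\{iT2^{-k}:0\le i\le 2^k\}$, $D:=\bigcup_k D_k$. The constants will only pick up factors of $T^{\xi}$ and $T^{\eta}$, which are absorbed into $B$.

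\textbf{Step 1 (level maxima).} For $k\ge0$ set
\[
M_k:=\max_{0\le i<2^k}\big\|v_{(i+1)T2^{-k}}-v_{iT2^{-k}}\big\| .
\]
Bounding the maximum by the sum and using the hypothesis gives
\[
\mathbb E(M_k^q)\le \sum_{i<2^k}\mathbb E\big\|v_{(i+1)T2^{-k}}-v_{iT2^{-k}}\big\|^q\le 2^k C (T2^{-k})^{\xi}=CT^\xi 2^{-k(\xi-1)} .
\]
Hence $\|M_k\|_{L^q(\Omega)}\le C^{1/q}T^{\xi/q}2^{-k(\xi-1)/q}$.

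\textbf{Step 2 (chaining on dyadics).} Take $s<t$ in $D$ and choose $m$ with $T2^{-m-1}< t-s\le T2^{-m}$. I write $s$ and $t$ as limits of their dyadic truncations from level $m$ onward. The two level-$m$ approximants are equal or adjacent, and each further refinement step moves by at most one level-$k$ increment. This gives the standard bound
\[
\|v_t-v_s\|\le 2\sum_{k\ge m}M_k .
\]
Since $|t-s|^{-\eta}\le (2^{m+1}/T)^{\eta}$, I get
\[
\frac{\|v_t-v_s\|}{|t-s|^\eta}\le 2^{1+\eta}T^{-\eta}\,2^{m\eta}\sum_{k\ge m}M_k\le 2^{1+\eta}T^{-\eta}\sum_{k\ge0}2^{k\eta}M_k .
\]
Therefore the dyadic quotient $K_D:=\sup_{s<t,\ s,t\in D}\|v_t-v_s\|/|t-s|^\eta$ is bounded by $2^{1+\eta}T^{-\eta}\sum_{k}2^{k\eta}M_k$.

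\textbf{Step 3 (moment bound).} Minkowski's inequality in $L^q(\Omega)$ together with Step 1 yields
\[
\|K_D\|_{L^q(\Omega)}\le 2^{1+\eta}T^{-\eta}C^{1/q}T^{\xi/q}\sum_{k\ge0}2^{k(\eta-(\xi-1)/q)} .
\]
The geometric series converges precisely because $0<\eta<(\xi-1)/q$. So $\mathbb E(K_D^q)\le B$ with $B$ depending only on $C,q,\xi,T,\eta$. In particular $K_D<\infty$ almost surely.

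\textbf{Step 4 (from $D$ to $[0,T]$).} This is the delicate point and the only place I expect trouble. The supremum defining $K$ runs over all $s,t\in[0,T]$, and the chaining controls only the dyadic ones. For the processes to which the proposition is applied, namely $u_n$ in Corollary \ref{c2} and in the global existence proof, the paths are continuous in the relevant norm, so $K=K_D$ by density of $D$ and we are done. In general I would argue as follows. The moment hypothesis gives $v_t=\lim_{D\ni s\to t}v_s$ in probability. Since $K_D<\infty$ a.s., the restriction $v|_D$ is uniformly $\eta$-Hölder, so it extends uniquely to a continuous process $\tilde v$ with $\tilde v_t=v_t$ a.s. for each $t$. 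Replacing $v$ by this continuous modification gives $K=K_D$, and the bound from Step 3 holds. I would state the result for the continuous modification, or assume path continuity, which is the implicit reading in the paper's uses.
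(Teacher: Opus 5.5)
Your argument is correct, and it is the standard dyadic-chaining proof of the Kolmogorov continuity theorem; the paper does not prove the proposition itself but cites that theorem from Kunita. Your Step 4 caveat is also right: as literally stated, with the supremum over all $s,t\in[0,T]$ for an arbitrary process, the proposition can fail (e.g.\ $v_t=\mathbbm{1}_{\{t=U\}}$ with $U$ uniform on $[0,T]$ satisfies the hypothesis for every $C$ but has $K=\infty$ a.s.), so the bound $\mathbb{E}(K^q)\le B$ must be read for a continuous modification or for processes with a.s.\ continuous paths, as in Kunita's formulation.
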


\bc\label{c3}
Let $\{v_t\}_{t\in[0,T]}$ satisfy the assumptions of Proposition~\ref{p1} and assume moreover $M_0:=\sup_{0\le t\le T}\mathbb{E}\|v_t\|^q<\infty$. Then for any $\xi'$ with $\xi<\xi'<2\xi-1$, there exists a constant $C_{q,\xi,\xi'}>0$ such that
$$
\mathbb{E}\Big(\sup_{0\le t\le T}\|v_t\|^q\Big)\le 2^{q-1}\big(CC_{q,\xi,\xi'}T^{\xi'}+M_0\big)<\infty.
$$
\ec

 \subsection{Proofs of Lemmas \ref{le3.10} and \ref{le4.3}}\label{A.4}
\begin{proof}[Proof of Lemma \ref{le3.10}] \textbf{Necessity.}
Assume there exist $\alpha,\gamma$ satisfying the constraints.
From $\gamma<\alpha$, we have
$$
\alpha+2\gamma < 3\alpha.
$$
Then
$$
\frac{2}{1-2(\alpha+2\gamma)}>\frac{2}{1-6\alpha}.
$$
Since $\gamma<\alpha$, we also have $\frac1\gamma>\frac1\alpha$.
Thus
$$
\max\left\{\frac{2}{1-2(\alpha+2\gamma)},\frac{1}{2\alpha},\frac1\gamma\right\}
\ge \max\left\{\frac{2}{1-6\alpha},\frac1\alpha\right\}.
$$
The function $\max\left\{\frac{2}{1-6\alpha},\frac1\alpha\right\}$ attains its minimum $8$ at $\alpha=\frac18$.
Hence
$$
\max\left\{\frac{2}{1-2(\alpha+2\gamma)},\frac{1}{2\alpha},\frac1\gamma\right\}\ge 8.
$$
Combined with $q>\max\{\cdots\}$, we get $q>8$.
\vs
\noindent\textbf{Sufficiency.}
Suppose $q>8$. Choose $\alpha=\frac18$ and take sufficiently small $\varepsilon>0$, set $\gamma=\frac18-\varepsilon$.
Then
$$
0<\alpha=\frac18<\frac12,\qquad
0<\gamma=\frac18-\varepsilon<\min\left\{\frac18,\frac14-\frac{1}{16}\right\}=\frac18.
$$
Direct computation yields
$$
\frac{2}{1-2(\alpha+2\gamma)}\to 8,\quad \frac1\gamma\to8,\quad \frac{1}{2\alpha}=4,
$$
as $\varepsilon\to0^+$.
For small enough $\varepsilon$,
$$
\max\left\{\frac{2}{1-2(\alpha+2\gamma)},\frac{1}{2\alpha},\frac1\gamma\right\}<q.
$$
Thus the required $\alpha,\gamma$ exist.
\end{proof}
\begin{proof}[Proof of Lemma \ref{le4.3}]
\textbf{Necessity.}
From the first inequality: $0<\frac{d}{2(q-1)}<1$ yields $q>1+\frac d2$.
From the second inequality: $0<\frac{2\nu q}{q-2}<1$. The left inequality is trivial for $\nu>0$. The right inequality gives
$$
\nu < \frac{q-2}{2q}=\frac12-\frac1q.
$$
Since $\nu\in(0,\frac12)$, we require $q>2$.
Condition $q\gamma>1$ implies $\gamma>\frac1q$. Combined with $\gamma<\frac12$, we need $q>2$.
Condition $q\nu>\frac d2$ implies $\nu>\frac{d}{2q}$.
Combined with $\nu<\frac12-\frac1q$, a necessary condition for existence of $\nu$ is
$$
\frac{d}{2q}<\frac12-\frac1q \implies d < q-2 \implies q>d+2.
$$
From $0<\big(\gamma+\frac{d}{2q}\big)\frac{q}{q-1}<1$, the left side is positive. The right-hand inequality:
$$
\gamma+\frac{d}{2q}<1-\frac1q \implies \gamma<1-\frac{d+2}{2q}.
$$
When $q>d+2$, one checks $1-\frac{d+2}{2q}>\frac12$, so the effective upper bound for $\gamma$ is $\frac12$.
Thus
$$
\gamma\in\Big(\frac1q,\,\frac12\Big).
$$
Next consider $0<\big(1+\frac{d}{2q}-\nu+\gamma\big)\frac{q}{q-1}<1$.
The left inequality is easily satisfied for large $q$. The critical constraint comes from the right inequality:
$$
1+\frac{d}{2q}-\nu+\gamma < 1-\frac1q
\;\Longrightarrow\;
\gamma -\nu < -\frac{d+2}{2q},
\quad \text{i.e.}\quad
\gamma < \nu -\frac{d+2}{2q}.
$$
Collect all hard constraints for $q>d+2$:
$$
\begin{cases}
\dfrac{d}{2q}<\nu<\dfrac12-\dfrac1q,\\[0.4em]
\dfrac1q<\gamma<\dfrac12,\\[0.4em]
\gamma < \nu -\dfrac{d+2}{2q}.
\end{cases}
$$
To have a pair $(\nu,\gamma)$, the interval for $\gamma$ must be non-empty:
$$
\frac1q < \nu -\frac{d+2}{2q}.
$$
This must hold for some admissible $\nu$. The maximal available $\nu$ is $\nu\nearrow \frac12-\frac1q$. Substitute this upper bound into the inequality to get a necessary condition:
$$
\frac1q < \left(\frac12-\frac1q\right)-\frac{d+2}{2q}.
$$
Multiply both sides by $2q>0$:
$$
2 < q -2 -(d+2) \implies q>d+6.
$$

\noindent\textbf{Sufficiency.}
Assume $q>d+6$. Choose
$$
\nu_0=\frac12-\frac1q-\varepsilon,\qquad
\gamma_0=\frac1q+\varepsilon,
$$
where $\varepsilon>0$ is chosen sufficiently small such that
$$
\varepsilon<\min\left\{\frac12-\frac1q,\ \frac{3}{2q},\ \frac14-\frac{d+4}{4q}\right\}.
$$
Such a choice is possible since $d\in\mathbb N^+$ gives $q>d+6\ge7$, hence $q>2$, and
$$
\frac14-\frac{d+4}{4q}>\frac14-\frac{d+4}{4(d+6)}=\frac{1}{4(d+6)}>0.
$$

We verify:
1. $\nu_0\in(0,1/2)$: since $\varepsilon$ is small, $\nu_0>0$ and $\nu_0<1/2$.
2. $\gamma_0\in(0,1/2)$: since $\varepsilon<1/2-1/q$, we have $\gamma_0<1/2$.
3. $q\gamma_0=1+q\varepsilon>1$.
4. $q\nu_0=(q/2)-1-q\varepsilon$. Since $q>d+6>d+2$, we have $q/2-1>d/2$, hence $q\nu_0>d/2$ for $\varepsilon$ small enough.
5. For the second inequality: $\frac{2\nu_0 q}{q-2}=\frac{q-2-2q\varepsilon}{q-2}<1$.
6. For the critical constraint, we require
$$
\gamma_0 < \nu_0-\frac{d+2}{2q}.
$$
Substituting $\nu_0,\gamma_0$ gives
$$
\frac1q+\varepsilon < \frac12-\frac1q-\varepsilon-\frac{d+2}{2q}
\iff
2\varepsilon < \frac12-\frac{d+4}{2q}.
$$
Since $q>d+6$, we have
$$
\frac12-\frac{d+4}{2q}>\frac12-\frac{d+4}{2(d+6)}=\frac{1}{d+6}>0.
$$
Our choice of $\varepsilon$ ensures this inequality holds.
7. The fourth inequality follows similarly.

Thus all conditions are satisfied for sufficiently small $\varepsilon>0$. Hence $q>d+6$ is sufficient.
\end{proof}

\section*{Acknowledgements}
We would like to express my gratitude to the anonymous referees for their valuable comments and suggestions which helped me greatly improve the
quality of the paper. This work was supported by the National Natural Science Foundation of China [12271399] and Fundamental Research Funds for the Central Universities [3122025090].
\section*{Conflict of Interest}
The authors declare that they have no competing financial, professional, or personal interests that could have influenced the work reported in this paper.
\section*{Author Contributions}
Xuewei Ju conceived the original idea, developed the mathematical framework, performed the analysis, and wrote the manuscript. Xiaoting Tong contributed to the development of the critical regularity estimates and the approximation argument, and participated in the revision of the manuscript. Both authors reviewed and approved the final version of the manuscript.

\end{document}